\title{Lower bounds for ranking-based pivot rules} 
\author{Yann Disser}{TU Darmstadt, Germany}{disser@mathematik.tu-darmstadt.de}{https://orcid.org/0000-0002-2085-0454}{}
\author{Georg Loho}{FU Berlin, Germany \and University of Twente, The Netherlands}{g.loho@utwente.nl}{https://orcid.org/0000-0001-6500-385X}{}
\author{Matthew Maat}{University of Twente, The Netherlands}{m.t.maat@utwente.nl}{https://orcid.org/0009-0004-7361-8538}{} % N: do you have an orcid?
\author{Nils Mosis}{TU Darmstadt, Germany}{mosis@mathematik.tu-darmstadt.de}{https://orcid.org/0000-0002-0692-0647}{}%TODO mandatory, please use full name; only 1 author per \author macro; first two parameters are mandatory, other parameters can be empty. Please provide at least the name of the affiliation and the country. The full address is optional. Use additional curly braces to indicate the correct name splitting when the last name consists of multiple name parts.
\authorrunning{Y. Disser, G. Loho, M. Maat, and N. Mosis} %TODO mandatory. First: Use abbreviated first/middle names. Second (only in severe cases): Use first author plus 'et al.'
\keywords{lower bounds, Markov decision processes, parity games, pivot rules, policy iteration, simplex method} %TODO mandatory; please add comma-separated list of keywords
\tikzset{harp/.style={arrows = {[harpoon]<->[harpoon]}}}
\definecolor{p0Blue}{RGB}{0, 90, 169} % color for states in MDPs (player 0)
\definecolor{p1Red}{RGB}{230, 0, 26} % color for randomization in MDPs (player 1)
\newcommand{\N}{\mathds{N}}
\newcommand{\Q}{\mathds{Q}}
\newcommand{\R}{\mathds{R}}
\newcommand{\tl}{\triangleleft}
\newcommand{\tr}{\triangleright}
\newcommand{\ML}{\left\lfloor\frac{\left\lfloor\frac{m_i}{6\ell_i}\right\rfloor}{2}\right\rfloor}
\newcommand{\thmtext}[1]{#1}
\newcommand{\NP}{\ensuremath{\textup{\textsf{NP}}}\xspace}
\newcommand{\spmatrix}[1]{\bigl(\begin{smallmatrix}#1\end{smallmatrix}\bigr)}
\newcommand{\pspace}{\ensuremath{\textup{\textsf{PSPACE}}}\xspace}
\DeclareMathOperator{\LP}{LP}
\DeclareMathOperator{\val}{Val}
\DeclareMathOperator{\rc}{rc}
\DeclareMathOperator{\rew}{rew}
\DeclareMathOperator*{\argmax}{arg\,max}
\renewcommand{\vec}[1]{\ensuremath{\boldsymbol{#1}}}
\renewcommand{\epsilon}{\varepsilon}
\newtheorem{question}{Question}
\definecolor{mossgreen}{rgb}{0.54, 0.60, 0.36}
\newcommand{\LIGadget}[9][]{ % 9 parameters: [optional: label position], starting position, end node, reward, probability, reward on middle action, position of random node, first state label, second state label
    \node[circle, draw=p0Blue!50, fill=p0Blue!20, thick, minimum size=2mm] (gamma) at ($(#2)!0.25!(#3)$) {#8};
    \node[rectangle, draw=p1Red!50, fill=p1Red!20, thick, minimum size=2mm] (alpha) at ($(#2)!#7!(#3)$) {};
    \node[circle, draw=p0Blue!50, fill=p0Blue!20, thick, minimum size=2mm] (beta) at ($(#2)!0.75!(#3)$) {#9};
    
    \draw (#2) edge [bend right=10,-{Latex[length=2mm]}] (gamma);
    \draw (gamma) edge [bend right=10,-{Latex[length=2mm]}] (#2);
    \draw (alpha) edge [p1Red,bend right=20,-{Latex[length=2mm]}] (#2);
\ifx\relax#1\relax % Check if the optional argument is empty
    \draw (beta) edge [-{Latex[length=2mm]}] node [midway,auto] {#4} (#3);
    \draw (alpha) edge [p1Red,-{Latex[length=2mm]}] node [p1Red,midway,auto] {#5} (beta);
    \draw (gamma) edge [-{Latex[length=2mm]},auto] node {#6} (alpha);
\else
    \draw (beta) edge [-{Latex[length=2mm]}] node [midway,#1] {#4} (#3);
    \draw (alpha) edge [p1Red,-{Latex[length=2mm]}] node [p1Red,midway,#1] {#5} (beta);
    \draw (gamma) edge [-{Latex[length=2mm]},#1]  node {#6} (alpha);
\fi    
}
\newtheorem{rep@theorem}{Theorem} % unnumbered surrogate with its own counter
\begin{document}

\maketitle

\begin{abstract} 
The existence of a polynomial pivot rule for the simplex method for linear programming, policy iteration for Markov decision processes, and strategy improvement for parity games each are prominent open problems in their respective fields.
While numerous natural candidates for efficient rules have been eliminated, all existing lower bound constructions are tailored to individual or small sets of pivot rules.
We introduce a unified framework for formalizing classes of rules according to the information about the input that they rely on.
Within this framework, we show lower bounds for \emph{ranking-based} classes of rules that base their decisions on orderings of the improving pivot steps induced by the underlying data.
Our first result is a superpolynomial lower bound for strategy improvement, obtained via a family of sink parity games, which applies to memory-based generalizations of Bland's rule that only access the input by comparing the ranks of improving edges in some global order.
Our second result is a subexponential lower bound for policy iteration, obtained via a family of Markov decision processes, which applies to memoryless rules that only access the input by comparing improving actions according to their ranks in a global order, their reduced costs, and the associated improvements in objective value.
Both results carry over to the simplex method for linear programming.
\end{abstract}

\newpage
\setcounter{page}{1}%no page number >15 should appear
\section{Introduction}
\label{sec:introduction}
The simplex method for linear programming is among the most well-studied algorithms in existence. 
To this day, the fundamental question of whether the method admits a (strongly) polynomial pivot rule remains one of the most important unsolved problems in mathematical optimization.
A positive answer would address Smale's 9th problem~\cite{smale1998mathematical} and resolve the polynomial Hirsch conjecture~\cite{dantzig1963linear,Santos:2012}.
The question of the complexity of the simplex method also surfaced in other settings that are now known to reduce to linear programming: the complexity of strategy improvement for parity games~\cite{voge2000discrete} and of policy iteration for Markov decision processes~\cite{puterman1994markov}.

Despite the importance of the question and despite numerous negative results for individual pivot rules~\cite{avis_notes_1978, avis_exponential_2017, disser_exponential_2023, friedmann2011randomfacet, friedmann_subexponential_2011, friedmann_errata_2014, GoldfarbSit:1979, jeroslow_simplex_1973,KleeMinty, murty1980computational}, a general lower bound \emph{for all} rules is currently very much out of reach. 
Both, a polynomial bound on the diameter of polytopes and a systematic understanding of pivot rules, remain elusive. 
Hence, almost all results to date have been tailored to single rules with little hope of generalization.
Exceptions include a recent bound for shadow vertex rules and steepest edge rules~\cite{black2025exponential}, as well as a bound for combinations of three classical rules~\cite{disser_unified_2023}. 
While simultaneously handling a continuum of pivot rules, these latter results are still rather interpolations between pivot rules than conceptual classes that capture a meaningful subset of all rules.

We propose a classification of pivot rules according to the information they use, and provide general lower bounds for a natural family of rules.
We hope that our treatment can be the basis for a systematic study of general classes of pivot rules in the future.

\subparagraph*{Our results.}

We introduce a formal framework for defining classes of pivot rules in terms of the information on which they base their decisions.
We focus on \emph{ranking-based} classes of rules that rely only on orderings of the improving neighbors in each step according to some underlying data (e.g., objective value, global (Bland) index, rate of improvement, etc.), without otherwise using the underlying data.
This is a natural way of capturing ``combinatorial'' pivot rules and encompasses all classical deterministic rules.

Our first result is a superpolynomial lower bound on the running time of pivot rules that only use the relative ordering of improving edges/variables according to their global indices.
This generalizes Bland's rule, which always selects the improvement of lowest global index, to rules that use the local ranks of improving edges/variables in more elaborate ways.
In particular, our result extends beyond memoryless rules.
For example, it applies to an adaptation of Bland's rule that alternates between selecting the first and the second improving edge/variable.

\begin{theorem}[Informal]\label{thm:blandmemoryInformal}
For every deterministic pivot rule that uses~$o(N/\log (N))$ memory states and bases decisions solely on the ordering of improving edges/variables by global index, the strategy improvement and the simplex algorithm take~$N^{\omega(1)}$ iterations in the worst-case, where $N$ is the input size.
%For every deterministic, memoryless simplex pivot rule~$\Pi$ that bases its decision solely on the number of improving neighbors and their ranking by relative index, there is a family~$(\LP_n)_{n\in\N}$ of linear programs~$\LP_n$ in~$n$ variables such that the simplex method with~$\Pi$ needs $n^{\omega(1)}$ iterations.
%This is still true if~$\Pi$ may use~$o\left(n/\log (n)\right)$ memory states.
\end{theorem}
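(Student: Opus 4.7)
The plan is to extend the Friedmann-style superpolynomial lower bound for Bland's rule on sink parity games to the broader class of deterministic rules with $o(N/\log N)$ memory states whose only access to the input is the relative ordering of improving edges by global index. The high-level strategy is to construct a family $\{G_n\}$ of sink parity games with $N=\mathrm{poly}(n)$ vertices and a large family $\Pi_n$ of admissible global index assignments under which Bland's rule traces out a counter-like trajectory of length $N^{\omega(1)}$, and then to use an information-theoretic argument to show that every small-memory rule can be \emph{trapped} into the same bad trajectory on some $\pi\in\Pi_n$.

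First, I would revisit Friedmann's bit-counter construction and re-engineer the gadgets so that every bit-incrementation step is induced by a local improving-edge rank pattern that can be realized by many different global orderings. This redundancy is essential: the set $\Pi_n$ must be large enough that a small-memory transducer cannot pin down the canonical Bland ordering from only the polynomial amount of local information it has observed. In addition, the gadgets should be arranged so that improving edges at bits which are not being flipped produce uninformative local rank profiles, preventing the rule from using side information to deduce the global state of the counter.

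Next, I would formalize a rule $R$ as a deterministic finite transducer with state set $Q$ of size $s=o(N/\log N)$ that reads, at each iteration, the ranking of the improving edges at the current vertex, outputs a pivot, and updates its state. The run of $R$ on $(G_n,\pi)$ is then a function of $\pi$ alone. I would build a bad ordering $\pi_R$ for $R$ incrementally: at each iteration the adversary commits the next portion of $\pi$ so that the local rank pattern observed by $R$ coincides with what Bland sees on the canonical long trajectory. The quantitative heart is that after $T$ iterations the rule has induced at most $sT$ distinct state-vertex trajectories, while the number of orderings in $\Pi_n$ compatible with the commitments made so far remains at least $2^{\Omega(N\log N)}/(sT)$; since $s\log s=o(N)$, a consistent extension always exists, so one can follow the counter trajectory for $N^{\omega(1)}$ iterations and obtain the claimed lower bound on $R$.

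The main obstacle will be establishing the first step, namely that Friedmann's construction tolerates enough freedom in the global ordering to support such an adversary. Classical counter gadgets are fragile: a single permutation of indices within a gadget can shortcut the entire counter. I expect the technical core of the proof to lie in designing redundant gadgets, for instance by attaching parallel copies of each improving edge or by padding with dummy improving edges whose relative order may be freely permuted, so that the bit-flipping dynamics are preserved while the local rank profile at each step can be realized in exponentially many ways. Once this combinatorial core is in place, the memory argument reduces to a clean pigeonhole, and the corresponding lower bound for the simplex method on linear programs follows via the standard reduction from sink parity games.
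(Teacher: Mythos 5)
The proposal takes a genuinely different route from the paper, and I believe the route as described has a gap at its core.

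The paper's argument hinges on a simple but crucial observation that your proposal doesn't surface: because a $\{\textsc{Bland}\}$-ranking-based rule only sees the \emph{count} of improving edges (plus its memory state), its behavior on any instance is a fixed, fully predictable function, not something the adversary needs to probe or hide information from. The paper therefore keeps the number of improving moves constant at $m/3$ via ``controller'' gadgets, so the rule emits a fixed, eventually periodic sequence of ranks $(g_1,g_2,\dots)$ computable up front. The technical heart is then a static dichotomy (Lemma~\ref{lem:clustereddispersed}): this rank sequence is either \emph{clustered} (outputs huddle near interval endpoints, handled by duplicating each counter edge so that picking any small rank emulates Bland) or \emph{dispersed} (there is one large interval with outputs only at one end, handled by a ``delayer'' gadget plus decoy edges that absorb the off-interval ranks). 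The memory bound $\ell(m)=o(m/\log m)$ enters because the embeddable counter has $\Theta(m/\ell)$ levels, yielding $2^{\Theta(m/\ell)}$ iterations.

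Your information-theoretic/adaptive-adversary framing does not fit this structure. There is no ordering for the rule to ``learn'' and no compatibility of orderings to maintain: the rule never sees indices, only a count, so the rank pattern it observes is trivially the same as what Bland observes; what differs is which rank the rule \emph{chooses}, and that choice is deterministic given the count and memory state. Consequently the quantitative step you propose (``at most $sT$ state-vertex trajectories vs.\ $2^{\Omega(N\log N)}/(sT)$ consistent orderings'') does not correspond to anything the construction needs to establish, and it does not address the actual obstacle, namely that the rule may output ranks far from $1$ and the game must make those choices still advance the counter. You do correctly anticipate the need for redundancy (parallel copies, dummy improving edges), which loosely parallels the paper's multiplier and filler gadgets, but without the clustered/dispersed dichotomy and the delayer mechanism there is no argument that the counter survives an \emph{arbitrary} eventually periodic rank sequence. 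In short, the proposal has the right instinct about gadget-level padding but replaces the paper's decisive combinatorial classification with a counting argument that does not apply.
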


The granted memory is in some sense not far from optimal, since, for every given game, there exists a pivot rule with~$N$ memory states that takes linear time (Observation~\ref{obs:n_memory}).
We prove this theorem by constructing a family of sink parity games and applying a direct connection between the strategy improvement algorithm for parity games and the simplex algorithm for linear programming. 
This continues a fruitful line of work originating in lower bounds for policy iteration for Markov decision processes and parity games that were transferred to LPs in different ways~\cite{MelekopoglouCondon:1994,Friedmann:2009,Fearnley:2010,friedmann2011randomfacet,friedmann_subexponential_2011,avis_exponential_2017,disser_exponential_2023,disser_unified_2023,maat2025strategyimprovementsimplexalgorithm}. 
A key idea in these constructions is to model a binary counter by a game and use gadgets to force the algorithm to increase the counter bit by bit. 

For our second result, we devise an adversarial construction for the policy iteration algorithm for Markov decision processes, and again use a well-known connection to the simplex algorithm for linear programming~\cite{Friedmann2011ExponentialPrograms}.
This allows us to prove a subexponential (i.e. of the form $2^{\Theta(n^c)}$ for some $0<c<1$) 
lower bound on pivot rules that simultaneously use rankings by global index (such as Bland's rule), by reduced cost (such as Dantzig's original rule), and by objective improvement (such as the largest-increase rule). 
As before, our result applies to rules using combinations of these orderings in elaborate ways.

\begin{theorem}[Informal] \label{thm:rankingsSimplexInformal}
For every deterministic and memoryless pivot rule that bases decisions solely on orderings of improving actions/variables by global index, reduced cost, and objective improvement, the policy iteration and the simplex algorithm take~$\Omega(2^{\sqrt{N}})$ iterations in the worst-case, where $N$ is the input size.
%For every deterministic, memoryless simplex pivot rule~$\Pi$ that bases its decision solely on the number of improving neighbors and their rankings by relative index, reduced cost, and objective improvement, there is a family~$(\LP_n)_{n\in\N}$ of linear programs~$\LP_n$ in~$n$ variables such that the simplex method with~$\Pi$ takes~$\Omega(2^{\sqrt{n}})$ iterations.
\end{theorem}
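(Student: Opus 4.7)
The plan is to construct a family of Markov decision processes $\{M_k\}_{k\in\N}$ of size $N=\Theta(k^2)$ on which policy iteration, run with any memoryless pivot rule that only consults the three rankings listed in the statement, performs at least $2^k = 2^{\Omega(\sqrt{N})}$ iterations. Combined with the standard reduction of MDP policy iteration to the simplex method used in~\cite{Friedmann2011ExponentialPrograms}, this then yields the same bound for linear programming. I would extend the binary counter MDPs underlying earlier subexponential lower bounds~\cite{Fearnley:2010,friedmann_subexponential_2011}: a chain of $k$ bit gadgets is glued together by small interaction gadgets so that the policies reachable from the initial policy correspond to $k$-bit counter values, and the canonical increment $c\mapsto c+1$ is realised by a prescribed sequence of improving switches inside a bounded number of gadgets.

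The heart of the argument is a \emph{ranking-agreement} lemma: I would choose the one-step rewards and transition probabilities so that, at every policy $\sigma$ reachable during the run, the three rankings, restricted to improving actions at $\sigma$, induce the same total order on those actions, and so that this common order coincides with the Bland order used in~\cite{Fearnley:2010,friedmann_subexponential_2011}. If such an agreement can be enforced, any memoryless ranking-based rule behaves on $M_k$ exactly as some memoryless rule over a single ranking, so that its sequence of switches is determined by its action on a totally ordered improving set. To remove the residual freedom the rule might exploit to short-circuit counter values, the gadgets will additionally be designed so that every improving action at $\sigma$ corresponds to a part of the canonical increment step, forcing the counter to tick through all of its $2^k$ configurations.

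Concretely, I would assign Bland indices in reverse significance order (lowest indices to the most significant bits) and calibrate the rewards on a rapidly decaying geometric scale, so that the contribution of bit $i$ to any reduced cost or objective improvement dominates the combined contribution of all lower bits. A nested-scales argument would then show that the orderings induced by reduced cost and by objective improvement on any set of improving actions match the Bland order. Plugging the resulting calibration into the combinatorial structure inherited from~\cite{friedmann_subexponential_2011} would give each increment a cost of $\Theta(1)$ iterations and rule out skipping.

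The main obstacle is precisely the uniformity of the ranking agreement over the entire run. The Bland index is fixed once and for all, whereas the reduced cost and the objective improvement depend on the current policy and vary substantially as the counter ticks through $2^k$ configurations, so a single numerical calibration of the instance must be robust against all reachable policies simultaneously. I expect the delicate step to be verifying that the geometric scale survives the non-local updates that occur each time a higher bit flips and forces a cascade of lower bits to reset. I plan to handle this via an induction on $k$ that bootstraps the calibration from $M_{k-1}$ into $M_k$ one bit at a time, maintaining a strict dominance between the scales associated to consecutive bits as the inductive invariant.
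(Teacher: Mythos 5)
Your proposal correctly identifies the overall template (binary counter, agreement of rankings along the visited policies, and a quadratic blow-up giving the $\sqrt N$ exponent), but it omits the key device that makes the argument go through for an \emph{arbitrary} ranking-based rule, and the missing ingredient is not just a calibration detail.

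The gap is this. You propose a single fixed family $\{M_k\}$ and aim to enforce that at every reachable policy the three rankings agree, after which "its sequence of switches is determined by its action on a totally ordered improving set," hoping then to "rule out skipping." But ranking agreement alone does not constrain \emph{which position} in the common ranking the rule picks: by the symmetry condition in Definition~\ref{def:rankingbased}, a memoryless ranking-based rule reduces (on agreement policies) to an arbitrary function $f_\Pi\colon\N\to\N$ with $f_\Pi(k)\in[k]$ returning the rank to select. In a binary-counter MDP there is one improving switch per unset bit, and only the one at the lowest unset bit performs the canonical increment; a rule with, say, $f_\Pi(k)=\lceil k/2\rceil$ will flip some higher bit and jump past a large stretch of counter values, no matter how you calibrate rewards. "Designing the gadgets so that every improving action corresponds to a part of the canonical increment" cannot be realized by a single uniform construction, because the number of improving switches necessarily varies with the counter state and their targets necessarily span many levels.

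The paper resolves this by making the construction \emph{depend on the rule}: it classifies pivot rules by the asymptotic behavior of $f_\Pi$ and uses padding gadgets to absorb the rule's freedom. For $f_\Pi(k)=o(\sqrt k)$, every action of the base counter (with $k$ actions) is duplicated $k$ times, so that there are always at least $k$ improving copies of the least-preferred switch and the rule's choice is one of them; this blows up the instance size to $\Theta(k^2)$ and yields the $2^{\sqrt N}$ rate (Lemma~\ref{lem:Sqrt}). For $f_\Pi(k)\neq o(\sqrt k)$, the paper instead adds $m_i-f_\Pi(m_i)$ decoy states with always-improving actions that sit above the real counter in the Bland order, so the $f_\Pi(m_i)$-th ranked switch is forced to be the most-preferred one in the counter, matching the $f_\Pi\equiv k$ construction of Lemma~\ref{lem:BDL_with_fequalsm} (Lemma~\ref{lem:notInSqrt}). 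Your geometric-scale calibration idea is essentially what the paper uses inside each of these cases to make all three rankings coincide, but without the case split and the per-rule padding the construction cannot stop the rule from skipping, and the bound does not follow. Also note the paper must control the number of improving switches exactly (via controller-style gadgets) so that $f_\Pi$ is always evaluated at a predictable argument; your sketch does not account for the fact that the argument $|\Gamma^+|$ fluctuates during a run.
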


Conceptually, this shows that the inefficiency of Bland's rule, Dantzig's rule, and the largest-increase rule arises not merely from their greedy nature but already from the fact that the information they rely on is too limited.

\subparagraph*{Related work.}
On the positive side, regarding the performance of the simplex method, variants of the shadow vertex pivot rule provide polynomial simplex algorithms in average-case~\cite{borgwardt_shadow-vertex_1987} and smoothed~\cite{spielman_smoothed_2001,dadush2018friendly,huiberts2023upper} analysis.
Further, linear programs can be solved in weakly-polynomial time via ellipsoid~\cite{khachiyan1980polynomial} and interior-point~\cite{karmarkar1984new} methods.
An adaptation of the latter is strongly polynomial if the constraint matrix satisfies some sparsity condition~\cite{allamigeon2025interior,dadush2024strongly}, while barrier methods cannot be strongly polynomial~\cite{allamigeon2022no}.

On the negative side, the simplex algorithm is \NP-mighty \cite{disser_simplex_2018}, and it is \pspace-complete to predict the behavior of certain pivot rules~\cite{adler2014simplex,fearnley2015complexity}.
Further, it is \NP-hard to approximate the length of a shortest monotone path to the optimum~\cite{de2022pivot,cardinal2025inapproximability}.
Recently, it was proven that there is no polynomial pivot rule for the active-set method~\cite{disser2025unconditional}, which is a natural generalization of the simplex method to non-linear objectives, even if a convex quadratic is to be maximized~\cite{bach2025unconditionallowerboundactiveset}.
The proof of the latter uses \emph{deformed products}, which were introduced in~\cite{amenta1999deformed} to unify existing lower bound constructions for classical simplex pivot rules.

Interestingly, every pivot rule can be formulated as a \emph{normalized-weight} pivot rule if the associated normalization may depend on the input data~\cite{black2023polyhedral}. 
Further, every polyhedron allows for a similar polyhedron with small diameter~\cite{KaibelKukharenko:2024} such that a strongly polynomial simplex algorithm for polyhedra of small diameter would already resolve Smale's problem.
In the context of tropical linear programmimng~\cite{benchimol_tropical_2014}, it was shown that every strongly polynomial, \emph{combinatorial} simplex pivot rule would provide a strongly polynomial algorithm for mean payoff games~\cite{allamigeon2014combinatorial}.
Unique sink orientations are another combinatorial abstraction of linear programs~\cite{gartner2006linear,schurr2004finding,szabo2001unique}.

\section{Preliminaries}
\label{sec:preliminaries}

\textbf{The simplex method} solves linear programs (LPs) of the form
\begin{equation}\tag{$LP_{A,\vec b,\vec c}$}\label{eq:LP}
\begin{array}{rl}
   \max & \vec c^\top \vec x, \\
   \text{s.t.} & A\vec x = \vec b, \\
   & \vec x \geq \vec 0,
\end{array}
\end{equation}
where~$A\in \mathds{Q}^{m\times n}$ has full row-rank,~$\vec b\in \mathds{Q}^{m}$, and~$\vec c\in\mathds{Q}^n$.
A \emph{basis}~$B=\{b_1,\ldots,b_m\}\subseteq[n]$, where we write~$[k]:=\{1,2,\ldots, k\}$, for all~$k\in\N$, is a set of indices of~$m$ linearly independent columns of~$A$. 
We denote the (regular) matrix consisting of these columns by~$A_{\cdot B}$.
Given a basis~$B$, there is a unique~$\vec x\in\R^n$ such that~$A\vec x=\vec b$ and~$x_i=0$ for all~$i\in[n]\setminus B$.
If this~$\vec x$ is feasible for~\eqref{eq:LP}, i.e.,~$x_j\geq0$ for all~$j\in B$, we call it a \emph{basic feasible solution} (BFS) with basis~$B$, and~$B$ is a \emph{feasible basis}.
In this case,~$\vec x$ is a vertex of the feasible region~$\{\vec x\colon\, A\vec x=\vec b,\,\vec x\geq\vec 0\}$.

In this paper, we assume (without loss of generality) that the feasible region is non-empty, that the linear program is bounded, i.e., the optimum is attained at a vertex, and that the simplex method gets an initial feasible basis~$B$ as input.
Then, the algorithm selects an \emph{entering} index~$i\in[n]\setminus B$ and a \emph{leaving} index~$j\in B$ such that
\begin{itemize}
    \item~$\rc(i)\coloneqq c_i-\vec{c_B}^\top A_{\cdot B}^{-1}\vec{A_{\cdot i}} >0$,
    \item~$B'\coloneqq B\cup\{i\}\setminus\{j\}$ is a feasible basis,
\end{itemize}
and repeats with~$B\gets B'$, until reaching the optimum.
We call~$\rc(i)$ the \emph{reduced costs} of index~$i$ (or variable~$x_i$) and say that~$i$ (or~$x_i$) is an \emph{improving index} (or \emph{improving variable}) for~$B$ if~$\rc(i)>0$.
We write~$\Gamma^+(B)\coloneqq\{i\in[n]\setminus B\colon\, \rc(i)>0\}$ to denote the set of all improving indices for~$B$. 
Since the choices of~$i$ and~$j$ are in general not unique, \emph{pivot rules} are used to determine the behavior of the algorithm, see Section~\ref{sec:pivotrules}.

In this paper, we mainly consider \emph{non-degenerate} linear programs, where we have a one-to-one correspondence between bases and vertices of the feasible region, such that, in each iteration of the simplex method, the choice of the leaving index is uniquely determined by the entering index.

\textbf{Sink parity games} are two-player games that are played on the vertices of a directed graph. 
A sink parity game (SPG) is given by a tuple~$(V_0,V_1,E,\pi)$, specifying a directed graph~$G=(V_0\cup V_1\cup \{\top\},E)$ and a \emph{priority} function~$\pi:V_0\cup V_1\cup\{\top\}\to \mathds{Z}\cup\{-\infty\}$.
We will often also refer to the sink parity game itself by~$G$. 
We generally assume that each vertex of~$G$ has at least one outgoing edge.
Further, the vertex~$\top$, called the \emph{sink}, has precisely one outgoing edge, which is a self-loop, and is the only vertex with priority~$\pi(\top)=-\infty$. 

The game is played between player 0 and player 1, where player~$i$ owns the vertices of~$V_i$. At the start of the game, a token is placed on an initial vertex~$v_0$. 
At step~$k$, the owner of the current node~$v_k$ moves the token along an outgoing edge to a successor node~$v_{k+1}$.
The game continues infinitely, so the token travels along an infinite path~$(v_0,v_1,v_2,\ldots)$.
The objective of player 0 is to maximize the outcome, given by $\sum_{i=0}^{\infty}(-t)^{\pi(v_i)}$, where~$t\geq \vert V_0\cup V_1\vert$ is a constant.
With this objective, the strategy improvement algorithm we will define later is equivalent to the version described in \cite{voge2000discrete}, see also \cite{fijalkow_games_2023} for background on parity games. 
Therefore, player~$0$ wants to collect large even priorties and avoid large odd priorities. 
Player~$1$ aims to minimize the outcome. 

It can be shown that the optimal strategy for both players is positional, meaning they always make the same choice in the same node. 
We can thus consider a player~$i$ strategy as a function~$\sigma:V_i\to V_G$, where~$V_G\coloneqq V_0\cup V_1\cup\{\top\}$. So, if player~$i$ commits to a strategy~$\sigma_i$, the token can only move along the edges of~$E_{\sigma_i}:=\{(v,\sigma_i(v)):v\in V_i\}\cup \{(v,w):v\in V_{1-i}\}$. 

Every sink parity game is assumed to admit
\begin{itemize}
    \item a player 0 strategy~$\sigma_0$ such that the highest priority on any cycle in~$E_{\sigma_0}$ is even,
    \item a player 1 strategy~$\sigma_1$ such that the highest priority on any cycle in~$E_{\sigma_1}$ is odd.
\end{itemize}
We call the strategies from these assumptions \emph{admissible} strategies. These two assumptions guarantee that, with optimal play, the token travels along a simple path towards~$\top$, which guarantees that the outcome is finite for every given constant~$t$. It also turns out that there always exists a strategy that is optimal for all starting vertices simultaneously\cite{voge2000discrete}.
Hence, \emph{solving} the game usually means finding such an optimal strategy.

Let~$\sigma$ be an admissible player 0 strategy. 
Then, the optimal counterstrategy~$\bar{\sigma}$ for player 1 can easily be found by a shortest path computation.
If the players play according to~$\sigma$ and~$\bar{\sigma}$, the token will follow a simple path~$(v_0, v_1, \ldots, v_k,\top)$ that ends in the sink.
This defines a \emph{valuation} function~$\val_{\sigma}:V_G\to \mathds{N}^{\mathds{N}}$, where~$\val_{\sigma}(v_0)$ is given by the multiset~$\{\pi(v_0),\pi(v_1),\ldots,\pi(v_k)\}$ for every~$v_0\in V_G$. 
We can compare these multisets by~$\tl$, where $S_1\tl S_2$ if and only if~$\sum_{s\in S_1}(-|V_0\cup V_1|)^{s}<\sum_{s\in S_2}(-|V_0\cup V_1|)^{s}$; in this summation, elements occurring multiple times are used multiple times.
We call an edge~$a=(v,w)\in E_0$ with~$\val_{\sigma}(w)\tr \val_{\sigma}(\sigma(v))$ an improving move or \emph{improving switch} for~$\sigma$.
If we \emph{apply} an improving switch~$a\in E_0$ to a strategy~$\sigma$, we obtain a new strategy~$\sigma^a$ by~$\sigma^a(v)= w$ and~$\sigma^a(v')=\sigma(v')$ for all~$v'\in V_0\setminus \{v\}$. 
The improving switch~$a$ increases the value of~$v$ without decreasing the value of any other state.
A strategy~$\sigma$ is optimal for a sink parity game if and only if there is no improving switch for~$\sigma$.

\textbf{Markov decision processes} provide a framework for modeling the decision-making of an agent operating in an uncertain environment. 
The environment is described by a finite set~$S$ of \emph{states} and a finite set~$A$ of \emph{actions} representing the feasible decisions.
Each action~$a\in A$ is associated with a \emph{reward}~$\rew(a)\in\R$ and a probability distribution~$P_a$ over~$S$.
In each state~$s \in S$, the agent may choose among a non-empty subset~$A_s\subseteq A$ of \emph{available} actions.
When an action~$a\in A_s$ is chosen, the agent receives the reward~$\rew(a)$, and the process \emph{transitions} to a state~$s'\in S$, which is drawn according to~$P_a$.
For each~$s'\in S$, we denote the probability that action~$a$ leads to~$s'$ with~$P_{a,s'}$ and call it a \emph{transition probability}.

A state~$s$ is \emph{reachable} from another state~$s'$ if there exists a sequence of actions such that the process transitions from~$s'$ to~$s$ with positive probability.
A state~$\top\in S$ is called \emph{sink} if it is reachable from all states and if~$A_{\top}=\{a\}\subseteq A$ with~$\rew(a)=0$ and~$P_{a,\top}=1$.
Thus, once the agent reaches the sink, they cannot collect any further reward.

A \emph{policy} is a function~$\sigma\colon S\to A$ assigning to each state~$s \in S$ an action~$\sigma(s) \in A_s$. 
Given a policy~$\sigma$, the \emph{value} of each~$s\in S$, denoted by~$\val_\sigma(s)$, is the expected total reward obtained when the process starts in~$s$ and the agent follows~$\sigma$ in every state. 
%The overall value~$\val(\sigma)$ of~$\sigma$ is defined as the sum of its values over all initial states in~$S$, that is,~$\val(\sigma)=\sum_{s_0\in S}\val_\sigma(s_0)$.
In other words, the \emph{value} function~$\val_\sigma\colon S\to\R$ is determined by the following system of Bellman~\cite{bellman1966dynamic} equations
\begin{equation}\label{eq:Bellman}
\val_\sigma(s)=\rew(\sigma(s))+\sum_{s'\in S} P_{\sigma(s),s'}\val_\sigma(s'), \quad \forall s\in S,
\end{equation}
together with~$\val_\sigma(\top)=0$ if there is a sink~$\top$.
A policy~$\sigma$ is \emph{optimal} (with respect to the \emph{expected total reward criterion}) if~$\val_\sigma(s)\geq \val_{\sigma'}(s)$ for all~$s\in S$ and all policies~$\sigma'$.
See~\cite{puterman1994markov} for a discussion on other optimality criteria that consider discounted or average rewards.

%Every policy~$\sigma$ for some Markov decision process~$\mathcal{M}$ induces a Markov chain~$\mc(\mathcal{M},\sigma)=(S_0,S_1,\ldots)$, where~$S_0$ draws some state uniformly at random, and~$S_i$ draws some state according to~$P_{\sigma(S_{i-1})}$ for all~$i\geq 1$.

A policy~$\sigma$ is \emph{weak unichain} if there is a sink~$\top$, and the agent will finally reach~$\top$ with probability one when following~$\sigma$ in every state, irrespective of the starting vertex. We call the process weak unichain if it admits an optimal policy that is weak unichain.
We refer to~\cite{Friedmann2011ExponentialPrograms} for more details on the weak unichain condition. 

An action~$a\in A_s$ is an \emph{improving switch} for policy~$\sigma$ if
\[
\rc_\sigma(a)\coloneqq \rew(a)+\sum_{s'\in S} P_{a,s'}\val_\sigma(s')-\val_\sigma(s)>0,
\]
where~$\rc_\sigma(a)$ is called \emph{reduced costs} of~$a$ (with respect to~$\sigma$).

If we \emph{apply} an improving switch~$a\in A_s$ to a policy~$\sigma$, we obtain a new policy~$\sigma^a$ by~$\sigma^a(s)= a$ and~$\sigma^a(s')=\sigma(s')$ for all~$s'\in S\setminus \{s\}$. 
The improving switch~$a$ increases the value of~$s$ without decreasing the value of any other state.
A policy~$\sigma$ is optimal for a weak unichain Markov decision process if and only if there is no improving switch for~$\sigma$.

\textbf{Strategy improvement} or \textbf{policy iteration} is an algorithmic idea that can be used to find optimal strategies and values for many types of games on graphs, see, e.g.,~\cite{fijalkow_games_2023}. 
To avoid confusion, we use the name \textsc{StrategyImprovement} when it is applied to sink parity games, and the name \textsc{PolicyIteration} when applied to weak unichain Markov decision processes.
The idea is simple: given an initial policy/strategy, iteratively apply improving switches until reaching an optimal solution, see Algorithm~\ref{alg:PI}. 

\begin{algorithm}
    \DontPrintSemicolon
    \caption{\textsc{PolicyIteration} or \textsc{StrategyImprovement}}
    \label{alg:PI}
    \vspace{0pt}
    \textbf{input: }$\sigma$ weak unichain policy (MDP) or admissible strategy (SPG)
    \vspace{2pt}
    \hrule  
    \vspace{0pt}
    \While{$\sigma$ admits an improving switch}
    {
       ~$a\gets \text{an improving switch for } \sigma$\;
        \vspace{0pt}
       ~$\sigma\gets \sigma^a$\;
    }
    \textbf{return}~$\sigma$
\end{algorithm}

In this paper, we only consider a version of this algorithm that applies a single switch in each iteration.
For a more general discussion, we refer to~\cite{voge2000discrete,puterman1994markov}.
The algorithm will only visit admissible strategies/weak unichain policies, and is guaranteed to return the optimal strategy/policy after a finite number of iterations.
Whenever there are multiple improving switches, the choice in the algorithm is determined by an \emph{improvement rule}, analogously to pivot rules for the simplex method. 

It is well-known that, for every weak unichain Markov decision process~$\mathcal{M}$, there is a linear program~$\LP_{\mathcal{M}}$ such that the application of the simplex method to~$\LP_{\mathcal{M}}$ is equivalent to the application of \textsc{PolicyIteration} to~$\mathcal{M}$, in the following sense.

\begin{theorem}[\!\!\thmtext{\cite[Sec. 4.5]{Friedmann2011ExponentialPrograms}}]\label{thm:MDPreduction}
    Let~$\mathcal{M}$ be a weak unichain Markov decision process. 
    Then, there exists a non-degenerate, bounded linear program~$\LP_{\mathcal{M}}$ of the form (\ref{eq:LP}) such that
    \begin{itemize}
        \item there is a bijection between variables of~$\LP_{\mathcal{M}}$ and actions of~$\mathcal{M}$,
        \item there is a bijection between basic feasible solutions of~$\LP_{\mathcal{M}}$ and weak unichain policies for~$\mathcal{M}$, where variables in the basis correspond to actions used in the policy,
        \item given a basic feasible solution~$\vec x$ of~$\LP_{\mathcal{M}}$ and the corresponding weak unichain policy~$\sigma$ for~$\mathcal{M}$, the reduced cost of each variable equals the reduced cost of the associated action, and the objective value of~$\vec x$ equals~$\vec c^\top \vec x=\sum_{s\in S}\val_{\sigma}(s)$.
    \end{itemize}
\end{theorem}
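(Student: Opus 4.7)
The plan is to construct $\LP_{\mathcal{M}}$ as the occupation-measure LP dual to the classical value LP of~$\mathcal{M}$, and to verify each of the three items directly. Fix strictly positive weights $\alpha_s>0$ for each $s\in S\setminus\{\top\}$, taking $\vec\alpha=\vec 1$ for the objective-value identity. After handling the sink's unique self-loop action separately (e.g., by dropping the row at $\top$ so that the corresponding variable is free), introduce one non-negative variable $x_a$ per action $a\in A$ and consider the LP
\begin{equation*}
\max\sum_{a}\rew(a)\,x_a\quad\text{s.t.}\quad \sum_{a\in A_s}x_a-\sum_{a\in A}P_{a,s}\,x_a=\alpha_s\ \ \forall\,s\in S\setminus\{\top\},\quad \vec x\geq\vec 0.
\end{equation*}
Since every non-sink state is transient under any weak unichain policy, the constraint matrix has full row rank, placing the LP in the form required by~\eqref{eq:LP}, with variables in bijection with actions.

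For the bijection with weak unichain policies, let $N^\sigma$ denote the matrix of expected visits to non-sink states under a weak unichain policy $\sigma$; it is finite by weak unichain and satisfies $N^\sigma(I-P_\sigma)=I$. Setting $x^\sigma_{\sigma(s)}\coloneqq(\vec\alpha^\top N^\sigma)_s$ and $x^\sigma_a\coloneqq 0$ otherwise defines a feasible solution, and the columns indexed by $\{\sigma(s)\}_{s}$ are (up to permutation) the invertible matrix $I-P_\sigma^\top$, hence a feasible basis. Conversely, for any BFS with basis $B$, flow conservation together with a rank argument forces $B$ to contain exactly one action per non-sink state, inducing a policy~$\sigma$ that inherits the weak unichain property from feasibility of the BFS. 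The reduced-cost identity follows by observing that the dual system $A_{\cdot B}^\top\vec y=\vec c_B$ is exactly the Bellman system~\eqref{eq:Bellman}, whose unique solution is $y_s=\val_\sigma(s)$; thus $\rc(x_a)=\rew(a)-y_s+\sum_{s'}P_{a,s'}y_{s'}=\rc_\sigma(a)$ for each $a\in A_s$. Substituting the basic values into the objective yields $\vec c^\top\vec x^\sigma=\sum_s\alpha_s\val_\sigma(s)$, which equals $\sum_s\val_\sigma(s)$ for $\vec\alpha=\vec 1$.

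The step I expect to require the most care is non-degeneracy. Strict positivity of each basic coordinate $x^\sigma_{\sigma(s)}=\sum_{s'}\alpha_{s'}N^\sigma_{s',s}$ follows from $\vec\alpha>\vec 0$ together with reachability of every non-sink state from some positively weighted state, but avoiding spurious coincidences among basic coordinate values across distinct policies $\sigma$ in principle requires a generic choice of $\vec\alpha$, for instance a symbolic perturbation $\alpha_s=\varepsilon^s$ for small $\varepsilon>0$. Once this is established, the three items above combine into the theorem, matching the construction in~\cite[Sec.~4.5]{Friedmann2011ExponentialPrograms}.
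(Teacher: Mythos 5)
Your construction is the standard occupation-measure LP (the dual of the value-function LP), which is indeed what Friedmann does in~\cite[Sec.~4.5]{Friedmann2011ExponentialPrograms}; the paper itself does not prove this theorem but simply cites it, so your route is the same as the cited source. The reduced-cost and objective-value verifications are correct, and the forward/backward arguments for the basis--policy bijection are the right ones.

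Two points deserve tightening. First, the hedge about ``avoiding spurious coincidences among basic coordinate values across distinct policies'' reflects a misconception: non-degeneracy of~\eqref{eq:LP} is a per-vertex condition (every basic coordinate of every BFS must be strictly positive), not a condition comparing vertices. With $\vec\alpha=\vec 1$ it already holds, since $x^\sigma_{\sigma(s)}=\sum_{s'}\alpha_{s'}N^\sigma_{s',s}\geq\alpha_s N^\sigma_{s,s}\geq 1>0$ because a chain started at~$s$ visits~$s$ at least once. No generic perturbation is needed, and invoking $\alpha_s=\varepsilon^s$ is overkill. Second, ``dropping the row at $\top$ so that the corresponding variable is free'' does not actually yield the bijection in item two: if $x_{a_\top}$ appears in no constraint, its column of $A$ is the zero column, so it can never be basic, and the basis then has one fewer element than the policy has actions. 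The clean fix is to keep a row for $\top$ that just reads $x_{a_\top}=\alpha_\top$; this row is independent of the others (the $a_\top$-column is zero elsewhere), forces $a_\top$ into every feasible basis, and restores the one-to-one correspondence between basic variables and policy actions. You should also note why the LP is bounded --- for a weak unichain process the optimal policy has finite total expected reward, and by your objective-value identity the LP's optimum equals $\sum_s\val_{\sigma^\star}(s)<\infty$.
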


Recently,~\cite{maat2025strategyimprovementsimplexalgorithm} provided an analogous reduction for sink parity games, based on \cite{schewe_parity_2009}.
\begin{theorem}[\!\!\thmtext{\cite[Thm. 3.4, Sec. 3.1]{maat2025strategyimprovementsimplexalgorithm}}]\label{thm:PGreduction}
    Let~$G$ be a sink parity game in which every vertex has a unique priority. 
    Then, there exists a non-degenerate, bounded linear program~$\LP_G$ of the form~\eqref{eq:LP} such that
    \begin{itemize}
        \item there is a bijection between the variables of~$\LP_G$ and the edges in~$(V_0\times V_G)\cup \{(\top,\top)\}$,
        \item there is a bijection between the admissible strategies for~$G$ and the basic feasible solutions of~$\LP_G$, where the variables in the basis correspond to the edges used in the strategy,
        \item given a basic feasible solution~$\vec x$ of~$\LP_G$ and the associated admissible strategy~$\sigma$ for~$G$, a variable is improving at~$\vec x$ if and only if the corresponding edge is improving for~$\sigma$.
    \end{itemize}
\end{theorem}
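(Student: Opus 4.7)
The plan is to build the LP along the lines of Schewe's formulation \cite{schewe_parity_2009}, as refined in \cite{maat2025strategyimprovementsimplexalgorithm}, by exploiting the fact that when every vertex carries a unique priority, the multiset valuation $\val_\sigma(v)$ can be faithfully encoded as the single real number $\sum_{i} (-t)^{\pi(v_i)}$ for $t\ge|V_0\cup V_1|$. Under this encoding, for a fixed admissible player~0 strategy $\sigma$ together with the optimal counterstrategy $\bar{\sigma}$, the valuations satisfy a Bellman-type recursion
\begin{equation*}
\val_\sigma(v) \;=\; (-t)^{\pi(v)} \;+\; \val_\sigma(\mu(v)),\qquad v\neq \top,
\end{equation*}
where $\mu(v)=\sigma(v)$ for $v\in V_0$ and $\mu(v)=\bar\sigma(v)$ for $v\in V_1$, while $\val_\sigma(\top)=0$. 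This turns the joint behaviour into a shortest-path problem in the subgraph $E_\sigma\cup E_{\bar\sigma}$ and suggests a flow-based LP whose feasible bases correspond to admissible strategies.

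My first step would be to introduce one variable $x_e$ per edge $e$ in $(V_0\times V_G)\cup\{(\top,\top)\}$, with flow conservation at every vertex and with $\pi$-weighted objective of the form $\max\sum_{e=(v,w)\in V_0\times V_G}(-t)^{\pi(v)} x_e$, augmented by a suitable term for the sink loop so that the total objective value reproduces $\sum_{s}\val_\sigma(s)$. The role of player~1 would be encoded via additional min-type constraints (one per $v\in V_1$ and per outgoing edge) that force the valuation at $v$ to be at most $(-t)^{\pi(v)}$ plus the valuation at every successor; thus $\bar\sigma$ appears as the active constraint, playing a role analogous to dual multipliers. Next I would choose the initial basis to consist exactly of the edges $(v,\sigma(v))$ for $v\in V_0$ together with the sink loop, and show that for admissible $\sigma$ this basis is indeed feasible and non-degenerate: uniqueness of priorities ensures that no two strategies yield the same valuation, eliminating degeneracy, and the admissibility hypothesis ensures that the induced flow is acyclic and reaches $\top$ so that the basic solution is non-negative.

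The three bijections would then be established as follows. The variable-edge correspondence is built into the construction. For the strategy-BFS bijection, I would argue in both directions: any admissible $\sigma$ produces a basis as above, while conversely any BFS selects exactly one outgoing edge at each $v\in V_0$ (by flow conservation combined with the $\{0,1\}$-nature forced by the unit demand at each vertex), and the assumption of boundedness together with admissibility in the game rules out cyclic flows. For the improving-switch correspondence, I would compute $\rc(e)$ explicitly using the Bellman recursion: after solving the triangular linear system defined by the current basis, $\rc(e)$ for an edge $e=(v,w)\in V_0\times V_G$ reduces to $\val_\sigma(w)-\val_\sigma(\sigma(v))$ in the $(-t)$-encoding, which is positive exactly when $\val_\sigma(w)\tr\val_\sigma(\sigma(v))$, i.e., exactly when $e$ is an improving switch in the sense of sink parity games.

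The main obstacle I would expect is the treatment of player~1 together with establishing non-degeneracy. One has to set up the min-constraints so that the optimal counterstrategy $\bar\sigma$ is uniquely pinned down by any feasible basis (otherwise several bases would represent the same player~0 strategy and the bijection would fail), and one has to verify that the resulting LP really is non-degenerate rather than merely having distinct vertices. The uniqueness-of-priorities assumption is the key lever here: it ensures that distinct simple paths to $\top$ produce distinct valuations in the $(-t)$-encoding, which in turn makes all relevant linear combinations of the valuation vectors non-trivial and forces the uniqueness of the tight constraints corresponding to $\bar\sigma$. Once this is in place, the remainder is bookkeeping on the structure of the constraint matrix to match Theorem~\ref{thm:MDPreduction} in spirit, yielding the claimed statement.
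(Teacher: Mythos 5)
Note first that the paper does not actually prove \cref{thm:PGreduction}; it is cited verbatim from \cite[Thm.~3.4]{maat2025strategyimprovementsimplexalgorithm} and used as a black box, so there is no in-paper proof to compare against. Judged on its own terms, your sketch correctly identifies the natural ingredients (a Schewe-style flow LP on player-0 edges, the $(-t)^{\pi(\cdot)}$ encoding of valuations, uniqueness of priorities as the source of non-degeneracy, and reduced costs collapsing to $\val_\sigma(w)-\val_\sigma(\sigma(v))$), but it contains a structural gap that you flag yourself without resolving, and it is genuinely not resolvable in the form you describe.

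The issue is the treatment of player~1. The theorem requires the LP variables to be in bijection with $(V_0\times V_G)\cup\{(\top,\top)\}$ and nothing else. Your plan introduces flow variables $x_e$ for exactly those edges, and then adds ``min-type constraints [...] that force the valuation at $v$ to be at most $(-t)^{\pi(v)}$ plus the valuation at every successor''. But valuations are not variables of this LP --- they would be dual multipliers of the flow-balance constraints --- so one cannot write primal constraints on them without introducing fresh variables, which would break the required variable-to-edge bijection, or without silently switching to the dual program, in which case the variables would be values and not flows. In other words, the primal you describe and the constraints you want to impose live on opposite sides of LP duality, and the construction as written is not a well-formed LP. Relatedly, flow conservation at a $V_1$-vertex needs to route incoming flow to a specific (optimal-counterstrategy) successor even though there are no variables for player-1 edges; making this work while keeping $A$ independent of $\sigma$ is exactly the hard content of the cited result, and your plan does not say how it is achieved. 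Finally, the claim that ``flow conservation combined with the $\{0,1\}$-nature forced by the unit demand'' makes every BFS integral is asserted rather than argued; total unimodularity or an equivalent structural property of the constraint matrix would need to be established, and it depends on the very player-1 encoding that is missing.
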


Conveniently, it is very simple to transform a sink parity game into an equivalent sink parity game with unique priorities by repeating the following: whenever there are two nodes with priority~$p$, we can keep one of them at priority~$p$ and increase the priority of all other nodes with priority at least~$p$ by 2. This has no effect on the run of strategy improvement, unless there are two strategies with the same value, in which case it may introduce new improving moves. We refer to this well-known transformation as the \emph{standard transformation}.

\section{Ranking-based pivot rules}
\label{sec:pivotrules}
Pivot rules specify, in each iteration of the simplex method, which index enters and which index leaves the basis. Since the inception of the simplex method by Dantzig in 1947~\cite{Dantzig82}, many pivot rules have been proposed.
Formally, a pivot rule is a function that gets the data of the linear program as well as the current basis as input, and returns a pair of an entering and a leaving index. Given an LP of the form~\eqref{eq:LP} with~$m$ equality constraints and~$n$ variables, its data can be written as elements of~$\mathcal{L}_{m,n}\coloneqq\{(A,\vec b,\vec c)\colon A\in \Q^{m\times n},\, \vec b\in\Q^m, \vec c\in\Q^n\}$. The basis is an element of $\mathcal{P}([n])$; here, we denote the powerset of a finite set~$S$ by~$\mathcal{P}(S) \coloneqq 2^S$.
Some pivot rules allow for ties between several options of switches, that is, instead of determining a unique pair of an entering and a leaving index, a pivot rule may suggest a set of admissible index pairs.
Generalizing further, a pivot rule may return a probability distribution on the set of sets of feasible index pairs, in which case we call the rule \emph{randomized} -- otherwise, it is \emph{deterministic}. To this end, we denote the \emph{probability simplex} of a set~$S$ by~$\Delta(S)=\{p\in[0,1]^S\colon \sum_{s\in S}p(s)=1\}$.
Depending on whether the pivot rule has access to a finite memory of previous steps or not, we call it \emph{memory-based} or \emph{memoryless}, respectively. 
The number of available memory states may depend on the input size, so we denote it by~$H_{m,n}$.
The following definition gives a unified framework for pivot rules.

\begin{definition}\label{def:pivotrule}
A \emph{(simplex) pivot rule} with memory~$\left(H_{m,n}\right)_{n\in\N,\, m\in[n]}\in \N^{\N \times \N}$ is a family~$\Pi=\left(\Pi_{m,n}\right)_{n\in\N,\, m\in[n]}$ of functions
\begin{align*}    
\Pi_{m,n}\colon\mathcal{L}_{m,n}\times\mathcal{P}([n])\times [H_{m,n}] & \to \Delta\!\left(\mathcal{P}([n]^2)\right)\times [H_{m,n}], \\
((A,\vec b,\vec c), B,h)&\mapsto (p,h'),
\end{align*}

such that, if~$B$ is a feasible, non-optimal basis of the linear program~\eqref{eq:LP}, and, additionally, we have~$(i,j)\in S$ for some~$S\in\mathcal{P}([n]^2)$ with~$p(S)>0$, then~$i$ is an improving index for~$B$, and~$B\cup\{i\}\setminus\{j\}$ is a feasible basis.
\end{definition}

Note: we use $\mathcal{P}([n])$ for ease of notation. In reality, we are only concerned with the cases where the set $B$ is a feasible basis.
We interpret \cref{def:pivotrule} as follows. Suppose we apply the simplex method with pivot rule~$\Pi$, the algorithm is currently at basis~$B$, and the current memory state of~$\Pi$ is~$h$.
Then~$\Pi$ updates its memory state to~$h'$, we draw a set~$S\in\mathcal{P}([n]^2)$ according to~$p$, and we choose one pair~$(i,j)\in S$ of entering index~$i\in\Gamma^+(B)$ and leaving index~$j\in B$ yielding the new basis $B\cup\{i\}\backslash\{j\}$. 
Note that, if~$B$ is not a feasible basis for~\eqref{eq:LP}, then we do not pose any restriction on the output of~$\Pi$ (since this will never occur in practice).

\begin{example}
    This definition covers all classical pivot rules. For example, Zadeh's rule~\cite{Zadeh1980WhatAlgorithm} selects the improving index that was chosen least-often before.
    Thus, if it does not cycle, Zadeh's rule can be implemented with~$H_{m,n}={\binom{n}{m}}^n$ memory states, allowing it to count the number of times each index was chosen in the past.
    It returns~$p\in \Delta\left(\mathcal{P}([n]^2)\right)$ with~$p(S)=1$ for~$S=\{(i,j)\in[n]^2\colon\, i\in\Gamma^+(B) \text{ chosen least-often}, B\cup\{i\}\setminus\{j\} \text{ feasible basis}\}$.
\end{example}

Given a pivot rule~$\Pi=\left(\Pi_{m,n}\right)_{n\in\N,\, m\in[n]}$, we may write each~$\Pi_{m,n}$ as the composition of an \emph{information function}~$I^\Pi_{m,n}:\mathcal{L}_{m,n}\times\mathcal{P}([n])\to \Omega^\Pi_{m,n}$, where~$\Omega^\Pi_{m,n}$ can be any set, and a \emph{decision function}~$D^\Pi_{m,n}:\Omega^\Pi_{m,n}\times [H_{m,n}]\to\Delta\left(\mathcal{P}([n]^2)\right)\times [H_{m,n}]$ such that
\begin{equation}
    \Pi_{m,n}((A,\vec b,\vec c),B,h)=D^\Pi_{m,n}(I^\Pi_{m,n}((A,\vec b,\vec c),B),h) \label{eq:PivotRuleSplit}
\end{equation}
for all inputs.
There are many ways to decompose a given pivot rule~$\Pi$ into an information and a decision function, for example, we may choose~$\Omega^\Pi_{m,n}=\mathcal{L}_{m,n}\times\mathcal{P}([n])$, let~$I^\Pi_{m,n}$ be the identity function, and let~$D^\Pi_{m,n}=\Pi_{m,n}$.

The purpose of this decomposition is to give a natural notion of classes of pivot rules, based on the information they may use. 
We focus on classes of pivot rules that use combinatorial information, where~$\Omega^{\Pi}_{m,n}$ is a finite set.
We can, e.g., create such a class by fixing the set~$\Omega_{m,n}$, the information function~$I_{m,n}$, and the memory~$H_{m,n}$, while allowing any decision function~$D_{m,n}$.
Of course, the fewer restrictions we pose, the broader the resulting class of pivot rules.

\begin{example}
    Consider the class~$\mathcal{C}$ of \emph{combinatorial} pivot rules as defined in~\cite{allamigeon2014combinatorial}.
    That is, the rules in~$\mathcal{C}$ choose, at every step, the entering index based solely on the signs of all minors of the matrix~$M=\spmatrix{A \;\;\ \vec b \\
    \vec c^\top\ 0}$. 
    Thus~$\mathcal{C}$ is exactly the class of pivot rules of the form~\eqref{eq:PivotRuleSplit}, where~$\Omega_{m,n}$ denotes the finite set of all possible sign patterns of minors of~$M$, and~$I_{m,n}$ is the function that computes all these signs.
    
\end{example}

In this paper, we restrict ourselves to \emph{deterministic} pivot rules, which return deterministic distributions~$p\in \Delta\left(\mathcal{P}([n]^2)\right)$, i.e.,~$p(S)=1$ for a unique~$S\in \mathcal{P}([n]^2)$.
Further, we only focus on \emph{non-degenerate} linear programs, where the leaving index~$j$ is always uniquely determined by the entering index~$i$, for our lower bounds.
Therefore, we will use the simplified notation
\[
\Pi_{m,n}\colon\mathcal{L}_{m,n}\times\mathcal{P}([n])\times [H_{m,n}]\to\mathcal{P}([n])\times [H_{m,n}], \quad((A,\vec b,\vec c), B,h)\mapsto (S,h'),
\]
in the following; here, if~$B$ is a feasible, non-optimal basis, then~$i\in S$ implies~$i\in\Gamma^+(B)$.
Traditionally, in the worst-case analysis of pivot rules, the \emph{tie-breaking} -- that is, choosing an element of~$S$ -- is up to the adversary, whose goal is to establish lower bounds, see e.g. \cite{disser_exponential_2023} for a discussion on this. Therefore, when proving lower bounds, we may assume that the pivot rule never outputs a tie, as we could fix a tie-breaking otherwise.
Then, we have~$\vert S\vert=1$ and simplify the notation even further by letting the codomain of $\Pi_{m,n}$ be $[n]\times [H_{m,n}]$.
If there is only a single memory state, i.e.~$H_{m,n}=1$ for all indices~$m$ and~$n$, we call the pivot rule \emph{memoryless}, in which case we will also leave out the memory component from the notation.

Most pivot rules from the literature select the \emph{best} improving variable(s) according to some underlying measure derived from the input data. Such a measure naturally induces a total preorder\footnote{Recall that a binary relation over a set~$S$ is a \emph{preorder} if it is reflexive and transitive.
A preorder over~$S$ is \emph{total} on~$S'\subseteq S$ if any two elements of~$S'$ are comparable. Thus, a total preorder is a total order without the requirement of antisymmetry, allowing for ties between distinct elements.} on the set of improving variables. In our framework, this happens when the information function~$I$ has a special structure, in which case we refer to~$I$ as a \emph{neighbor ranking}.
Given a finite set~$S$, we denote the set of preorders over~$S$ by~$\mathcal{T}(S)$. 
    
\begin{definition}\label{def:neighbor_ranking}
    A \emph{neighbor ranking} is a family~$R=(R_{m,n})_{n\in \N,m\in [n]}$ of functions \[R_{m,n}\colon\ \mathcal{L}_{m,n}\times\mathcal{P}([n])\ \to\ \mathcal{T}([n]), \quad ((A,\vec b,\vec c), B)\ \mapsto\ \preceq,\] such that, if~$B$ is a feasible basis for~\eqref{eq:LP}, then~$\preceq$ is total on~$\Gamma^+(B)$, and every non-improving index~$i\in[n]\setminus\Gamma^+(B)$ is incomparable (wrt.\ $\preceq$) to any~$j\in[n]\setminus\{i\}$.
\end{definition}

Every neighbor ranking allows for a \emph{greedy} pivot rule.

\begin{definition}\label{def:greedy}
Given a neighbor ranking~$R=(R_{m,n})_{n\in \N,m\in [n]}$, a deterministic, memory\-less pivot rule~$\Pi=(\Pi_{m,n})_{n\in \N,m\in [n]}$ is \emph{greedy} for~$R$ if each~$\Pi_{m,n}$ is of the form~\eqref{eq:PivotRuleSplit} with
\begin{itemize}
    \item~$I^\Pi_{m,n}=R_{m,n}$,
    \item~$D^\Pi_{m,n}$ returns the set of maximal elements according to~$R_{m,n}((A,\vec b,\vec c),B)\eqqcolon\preceq$, i.e., it returns~$\{i\in \Gamma^+(B)\colon\, i\succeq i',\,\forall i'\in \Gamma^+(B)\}$.
\end{itemize}
\end{definition}

\begin{observation}\label{obs:greedy}
    Given a feasible basis~$B$ for some non-degenerate~\eqref{eq:LP}, let~$B[i]$ denote the unique feasible basis obtained by adding index~$i$ to~$B$ and removing another index, and let~$\vec x$ and~$\vec{x[i]}$ be the basic feasible solutions with bases~$B$ and~$B[i]$, respectively. The following classical pivot rules are greedy:
    \begin{itemize}
      \item \textsc{Bland}~\cite{bland1977new} orders by index of the improving variable,  selecting~$\min(\Gamma^{+}(B))$.
      \item \textsc{Dantzig}~\cite{dantzig1963linear} orders by reduced cost, selecting~$\argmax\nolimits_{i\in\Gamma^+(B)} c_i-\vec {c_B}^\top A_{\cdot B}^{-1}\vec{A_{\cdot i}}$.
      \item \textsc{LargestIncrease}~\cite{jeroslow_simplex_1973} orders by objective value, selecting~$
      \argmax\nolimits_{i\in\Gamma^+(B)} \vec{c_{B[i]}}^\top \vec{x[i]}$.
      \item \textsc{ShadowVertex}\cite{murty1980computational} orders by the ratio of shadow objective decrease to objective increase, selecting~$\argmax\nolimits_{i\in\Gamma^+(B)} \frac{\vec d^T(\vec{x[i]}-\vec x)}{\vec c^T(\vec{x[i]}-\vec x)}$, where~$\vec d\in\R^n$ is the shadow objective\footnote{ The shadow vertex rule is greedy under the assumption that $\vec d$ is fixed beforehand, instead of depending on the initial basis (which would require some memory). This assumption has no effect on worst-case analysis.}
      \item \textsc{SteepestEdge}~\cite{GoldfarbSit:1979} orders by the ratio of objective increase to length of the connecting edge, selecting~$\argmax\nolimits_{i\in\Gamma^+(B)} \frac{\vec c^T(\vec{x[i]}-\vec x)}{\vert\vert \vec{x[i]}-\vec x\vert\vert}$. 
    \end{itemize}
\end{observation}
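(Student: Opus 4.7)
The plan is to prove the observation by exhibiting, for each of the five classical rules, an explicit neighbor ranking $R=(R_{m,n})_{n\in\N,\,m\in[n]}$ for which the rule is greedy in the sense of Definition~\ref{def:greedy}. For each rule I would define a preorder $\preceq$ on $[n]$ that is total on the set of improving indices $\Gamma^{+}(B)$ and that leaves every non-improving index incomparable to all other indices; then I would verify that the set of maximal elements of $\preceq$ coincides with the set of improving indices selected by the classical rule.

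All five constructions have the same shape. Given data $(A,\vec b,\vec c)$ and a feasible basis $B$, I assign to each improving index $i\in\Gamma^{+}(B)$ a real-valued score $\phi(i)$, and I declare $i\preceq j$ iff $i,j\in\Gamma^{+}(B)$ and $\phi(i)\leq \phi(j)$, with non-improving indices incomparable to everything. Concretely, I would take $\phi(i)=-i$ for \textsc{Bland}, $\phi(i)=\rc(i)$ for \textsc{Dantzig}, $\phi(i)=\vec{c_{B[i]}}^{\top}\vec{x[i]}$ for \textsc{LargestIncrease}, $\phi(i)=\vec d^{\top}(\vec{x[i]}-\vec x)/\vec c^{\top}(\vec{x[i]}-\vec x)$ for \textsc{ShadowVertex}, and $\phi(i)=\vec c^{\top}(\vec{x[i]}-\vec x)/\lVert \vec{x[i]}-\vec x\rVert$ for \textsc{SteepestEdge}. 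Because each $\phi$ pulls back $\leq$ on $\R$, the induced relation is automatically reflexive, transitive, and total on $\Gamma^{+}(B)$, so it constitutes a valid neighbor ranking in the sense of Definition~\ref{def:neighbor_ranking}.

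Matching the greedy decision rule to the classical rule is then immediate from the defining formulas in the statement: maximizing $\phi$ over $\Gamma^{+}(B)$ reproduces the $\min$ or $\argmax$ prescribed for each of the five rules. Ties among maximal elements are permitted in Definition~\ref{def:greedy}, since greedy rules may return whole sets and tie-breaking is relegated to the adversary, as discussed just before Observation~\ref{obs:greedy}; under non-degeneracy, the \textsc{Bland} maximum is automatically unique, while all other denominators are strictly positive for improving $i$ so that the ratios are well-defined.

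I expect the only genuinely non-routine point to be the treatment of \textsc{ShadowVertex}. Per the footnote, the shadow objective $\vec d$ must be regarded as a fixed part of the input (e.g.\ appended to $(A,\vec b,\vec c)$) rather than as state that depends on the initial basis; only under this convention is $\phi(i)$ a legitimate function of $\mathcal{L}_{m,n}\times \mathcal{P}([n])$ as required by the signature of $R_{m,n}$, and only then does the rule fit the memoryless framework of Definition~\ref{def:greedy}. With this convention in place, all five rules satisfy Definition~\ref{def:greedy}, completing the proof.
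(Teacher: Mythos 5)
Your verification is correct and is exactly the routine argument the paper leaves implicit (Observation~\ref{obs:greedy} is stated without an accompanying proof). Two small points worth tightening, neither of which affects the substance: you should explicitly declare $i\preceq i$ for every $i\in[n]$ so that $\preceq$ is reflexive on all of $[n]$ as the footnote to Definition~\ref{def:neighbor_ranking} requires (that definition only demands a non-improving index $i$ be incomparable to $j\in[n]\setminus\{i\}$, so your phrase ``incomparable to everything'' as written would break reflexivity), and the uniqueness of the \textsc{Bland} minimizer follows simply from indices being distinct integers rather than from non-degeneracy, whose actual role here is to make $B[i]$, $\vec{x[i]}$, and the \textsc{ShadowVertex}/\textsc{SteepestEdge} denominators well-defined.
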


Now we are ready to introduce the class of \emph{ranking-based} pivot rules, which generalizes these classical rules by relaxing their greedy behavior and, additionally, allowing the pivot rules to depend on a set~$\mathcal{R} = \{ R^1, \dots, R^k \}$ of neighbor rankings~$R^i$.
That is, a ranking-based rule should base its decision solely on the relative positions of the improving variables in the preorders~$R^i_{m,n}((A,\vec b,\vec c),B)$.
However, by definition, a pivot rule outputs improving indices, so it must have access to these indices as well. With this information, it could, say, treat variable~$x_5$ differently from~$x_8$, or even specify its decision for every basis. To prevent the rule from abusing this knowledge, we add an additional symmetry condition, forcing the rule to only base its decision on the relative positions in the rankings. This is done by ensuring that whenever the rule encounters two equivalent situations with equivalent rankings, it is forced to make equivalent choices.

\begin{definition}\label{def:rankingbased}
    Let~$\mathcal{R} = \{ R^1, \dots, R^k \}$ be a fixed set of neighbor rankings. 
    A deterministic pivot rule~$\Pi=\left(\Pi_{m,n}\right)_{n\in\N,\, m\in[n]}$ is called \emph{$\mathcal{R}$-ranking-based} if
    \begin{itemize}
        \item Each~$\Pi_{m,n}$ is of the form~\eqref{eq:PivotRuleSplit} with~$\Omega^\Pi_{m,n}=\mathcal{T}([n])^k$, and for all inputs we have \[I^\Pi_{m,n}((A,\vec b,\vec c),B)=(R^1_{m,n}((A,\vec b,\vec c),B),\ldots, R^k_{m,n}((A,\vec b,\vec c),B))\eqqcolon (\preceq^{R_1}_B,\ldots,\preceq^{R_k}_B),\]
    \item\label{enum:cond2} For every two linear programs $L\in\mathcal{L}_{m,n}$, $L'\in\mathcal{L}_{m',n'}$ with respective feasible, non-optimal bases~$B$,~$B'$, and every memory state $h\leq \min\{H_{m,n},H_{m',n'}\}$, if there exists a bijection~$\varphi\,\colon\,\Gamma^+(B)\to \Gamma^+(B')$ preserving each ranking, i.e.,
    \[
    i \preceq^{R_j}_{B} i' \iff \varphi(i)\preceq^{R_j}_{B'}\varphi(i'), \quad \forall\,i,i'\in\Gamma^+(B),~ \forall\, j\in\{1,2,\ldots,k\},
    \]
    and~$\Pi_{m,n}(L,B,h)=(S,h_1)$ and~$\Pi_{m,n}(L',B',h)=(S',h_1')$, then~$\varphi(S)=S'$, and, additionally, if $n=n'$, then also~$h_1=h_1'$.
    \end{itemize}
\end{definition}
%The second condition assures the before mentioned symmetry: any two variables that are ranked equally must be treated equally. Moreover, we require the memory update to always be the same if the number of variables is the same. 
Every deterministic memoryless pivot rule can be formulated as a ranking-based rule: choose the ranking such that the first improving variables in the ranking are the ones preferred by the pivot rule. Then, choose as the decision function the function that always picks the first variables. This gives an equivalent ranking-based rule.

Recall that the information function, which specifies a tuple of neighbor rankings in the class of ranking-based rules, is always memoryless, while the decision function may be memory-based.
As an example, let \textsc{Bland} denote the neighbor ranking that orders by index, then a~$\{\textsc{Bland}\}$-ranking-based pivot rule~$\Pi$ may select the improving variable with the second smallest index.
If~$\Pi$ is also memory-based it may, e.g., alternatingly select the improving variable with the smallest and the largest index.
We analyze the class of memory-based~$\{\textsc{Bland}\}$-ranking-based rules, which we call \emph{index-based} rules for short, in Section~\ref{sec:memory}.

\section{Lower bounds for index-based pivot rules}
\label{sec:memory}
Recall that Bland's rule always picks the improving variable that has the lowest index. In this section, we consider~$\{\textsc{Bland}\}$-ranking-based rules with memory and without ties. 
Recalling \cref{def:rankingbased}, these rules take into consideration only the order of the indices of the improving variables, and the current memory state. The purpose of this section is to show that~$\{\textsc{Bland}\}$-ranking-based rules need a superpolynomial number of steps, even when equipped with some memory. We can let the memory only depend on~$n$ without losing generality, since we always have~$m\leq n$, so the number of memory states is always bounded by~$\max_{m\in [n]}H_{m,n}$ if~$n$ is constant. We denote the allowed memory of a pivot rule~$\Pi$ be given by~$H_{m,n}=\ell(n)$. 

Suppose that for some LP with $100$ variables, there is a feasible basis~$B$ from which there are $20$ improving moves, and that a $\{\textsc{Bland}\}$-ranking-based rule picks the improving move with the 12\textsuperscript{th} lowest index for this basis when the memory state is $5$. The second condition (symmetry requirement) in \cref{def:rankingbased} then forces this rule to always pick the 12\textsuperscript{th}-lowest index whenever there are 100 variables, 20 improving moves, and the memory state is 5. For this reason, the choices of the pivot rule are determined by only three parameters: the number of variables and improving variables, and the memory state.

%Because of the symmetry required from \cref{def:rankingbased}, this means that our rule can pick, say, the improving move with the twelfth lowest index, but cannot know exactly which index is which. Since \textsc{Bland} always gives a linear order on $\Gamma^{+}(B)$, the only information the pivot rule can use is the number of improving moves and the current memory state. Therefore, the following gives an equivalent definition of tieless~$\{\textsc{Bland}\}$-ranking-based rules. 

This means we can write~$\Pi$ in terms of a function~$P \colon \N^3 \to \N^2$, whose inputs represent these three parameters, and its outputs represent the chosen index and the memory update. To be precise, we have~$P(k,n,h)\in\{1,2,\ldots,k\}\times\{1,2\ldots,\ell(n)\}$, for all $k,n,h\in \N^3$. Then~$\Pi_{m,n}((A,\vec b,\vec c), B,h)$ is given as follows: if $P(|\Gamma^{+}(B)|,n,h)=(i',h')$, and $i$ is the $i'$-th smallest index in~$\Gamma^{+}(B)$, then~$\Pi_{m,n}((A,\vec b,\vec c), B,h)=(i,h')$. We refer to a function $P$ of this form as an~$\ell$-index-selector function, and we say $P$ \emph{induces} the $\ell$-index-based pivot rule $\Pi^P$ in this manner. The function $P$ does not need $m$ as input: for different $m$ the output must still be the same by the second condition of \cref{def:rankingbased}.
 Like the so-called \emph{normalized-weight} rules in~\cite[Cor. 1.4]{black2023polyhedral}, index-based rules are universal in the following sense.
\begin{observation}\label{obs:n_memory}
    Given some~\eqref{eq:LP}, if there exists a pivot rule that reaches the optimum in~$\ell(n)$ iterations, then there also exists an~$\ell$-index-based rule reaching the optimum in~$\ell(n)$ iterations. 
    In particular, any parity game with~$n$ player 0 states can be solved in linear time by some~$n$-index-based improvement rule.
\end{observation}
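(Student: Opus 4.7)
The plan is to prove part one by using the memory of the index-based rule as a step counter that encodes, iteration by iteration, the trajectory of the reference pivot rule; part two then follows by applying part one to an oracle rule on parity games.

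For part one, suppose $\Pi$ is a pivot rule that reaches the optimum of a fixed non-degenerate instance of~\eqref{eq:LP} with $n$ variables from basis $B_0$ after $T \leq \ell(n)$ iterations. Let $B_0, B_1, \ldots, B_T$ be the visited bases, let $i_t$ be the entering index at iteration $t$, and let $r_t$ be the position of $i_t$ in $\Gamma^+(B_{t-1})$ ordered increasingly by global index. I define the $\ell$-index-selector function $P$ by setting
\[
P\bigl(|\Gamma^+(B_{t-1})|,\, n,\, t\bigr) \;:=\; (r_t,\, t+1) \quad \text{for every } t \in \{1,\ldots,T\},
\]
and $P(k, n', h) := (1, h)$ on all remaining inputs (these values are never queried along the simulated run). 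Initializing the memory at~$1$, a direct induction on~$t$ shows that the induced rule $\Pi^P$ is at basis $B_{t-1}$ with memory state $t$ at the start of iteration~$t$; the query to $P$ then returns $(r_t, t+1)$, forcing $\Pi^P$ to pivot to the $r_t$-th smallest improving index, namely $i_t$. Hence $\Pi^P$ traces the same sequence of bases as $\Pi$ and terminates after $T \leq \ell(n)$ iterations while using at most $\ell(n)$ memory states.

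For part two, I apply part one with $\ell(n) = n$; it therefore suffices to exhibit, for every sink parity game $G$ with $n$ player 0 states, some (game-dependent) pivot rule that reaches the optimum of the LP associated to $G$ via Theorem~\ref{thm:PGreduction} in at most $n$ iterations. Fixing an optimal strategy $\sigma^*$ of $G$, I define the oracle rule that, at the current admissible strategy $\sigma \neq \sigma^*$, pivots at some player 0 vertex $v$ with $\sigma(v) \neq \sigma^*(v)$ for which the edge $(v, \sigma^*(v))$ is improving for $\sigma$. Such a vertex must exist by the Bellman optimality characterization for sink parity games: otherwise $\sigma$ would already be locally optimal against every replacement by an action of $\sigma^*$, contradicting the (strict) optimality of $\sigma^*$ together with $\sigma \neq \sigma^*$. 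Since each such switch strictly shrinks the disagreement set with $\sigma^*$, the rule terminates in at most $|V_0| = n$ iterations.

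The main obstacle I expect is formally justifying the existence of an improving switch toward $\sigma^*$ in the parity game setting; although this is essentially the Bellman optimality principle, pinning down the comparison of valuations via the order $\tl$ on multisets of priorities requires some care. Part one itself is a routine bookkeeping argument.
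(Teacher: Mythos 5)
Your proof of the first part matches the paper's approach: the memory acts as a step counter, and the index-selector function is hard-coded along the reference trajectory so that each memory state deterministically reproduces the next pivot. The bookkeeping via $r_t$ and the transition $t \mapsto t+1$ is exactly the idea the paper sketches.

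For the second part, the paper simply cites Friedmann (\cite[Lem.~4.2]{Friedmann2011ExponentialPrograms}) for the existence of a linear sequence of improving switches, whereas you supply a self-contained argument via the "oracle towards $\sigma^*$" rule. This is a genuine alternative route, and the overall structure is sound: the disagreement count with a fixed optimal strategy $\sigma^*$ decreases monotonically and the run halts after at most $|V_0|$ switches. The one place where you should be careful (and you flag this yourself) is the existence of an improving switch toward $\sigma^*$ whenever $\sigma \neq \sigma^*$ is non-optimal. The standard argument is by contradiction: if no switch $(v, \sigma^*(v))$ is improving for $\sigma$, then $\val_\sigma(\sigma^*(v)) \tle \val_\sigma(\sigma(v))$ for every $v \in V_0$, so $\val_\sigma$ is a superfixpoint of the one-step operator for $\sigma^*$, which forces $\val_\sigma \tre \val_{\sigma^*}$ and hence $\sigma$ is already optimal; this transfers to the $\tl$ ordering on multisets because the latter is encoded by the real-valued outcome $\sum_i (-t)^{\pi(v_i)}$ used in the definition. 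A small technical point: if the game admits several optimal strategies, the oracle rule still terminates within $n$ steps, since the absence of an improving switch toward $\sigma^*$ already implies global optimality of $\sigma$, so the algorithm stops. Finally, note that the observation's "$n$-index-based" overloads $n$ to mean the number of player-0 states rather than the number of LP variables; your application of part one implicitly reads it this way, which matches the paper's intent but is worth stating explicitly to avoid confusion with the LP-side $n$.
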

\begin{proof}
    Let~$(B_0,B_1,\ldots,B_{\ell(n)})$ be the sequence of bases of~\eqref{eq:LP} visited by the pivot rule, where~$B_{\ell(n)}$ denotes the optimal basis.
    Then, we can choose a memory state~$h_i$ for every~$i\in\{0,1,\ldots,\ell(n)-1\}$ such that, if the current memory state is~$h_i$ and the current basis is~$B_i$, the index-based rule transitions to the basis~$B_{i+1}$ and updates the memory state to~$h_{i+1}$. 
    For parity games, the optimum can always be reached in a linear number of improving switches, given perfect choices of the improvement rule \cite[Lem. 4.2]{Friedmann2011ExponentialPrograms}. 
\end{proof}
 
Recalling that the polyhedra with the largest known diameters are linear in the dimension, this illustrates the power of this class of pivot rules.
This contrasts in some sense with the following theorem, where the rest of this section is dedicated to its proof.\footnote{All missing proofs are deferred to the appendix.}

\begin{theorem}\label{thm:blandmemory}
Let~$\ell(n)=o\left(\frac{n}{\log (n)}\right)$. Then, for any~$\ell$-index-based rule~$\Pi^{P}$, there exists a family~$(\LP_n)_{n\in\N}$ of linear programs~$\LP_n$ in~$n$ variables and~$m\leq n$ constraints such that the simplex method with pivot rule~$\Pi^{P}$ requires $n^{\omega(1)}$ iterations to find the optimal basis.
\end{theorem}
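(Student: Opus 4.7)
By Theorem~\ref{thm:PGreduction} it suffices to exhibit, for any $\ell$-index-based rule~$\Pi^P$, a family of sink parity games on which the strategy improvement algorithm with rule~$\Pi^P$ uses $n^{\omega(1)}$ iterations; the bound for LPs then follows. For each~$B$ the plan is to construct a sink parity game~$G_B$ of size $\mathrm{poly}(B)$ that implements a $B$-bit binary counter and forces~$\Pi^P$ to perform $\Omega(2^B)$ improving switches. Calibrating $B=B(n)=\omega(\log n)$ while keeping the resulting LP of size~$n$ then yields the desired superpolynomial bound; such a choice is compatible with the memory budget $\ell(n) = o(n/\log n)$.

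The construction of~$G_B$ follows the classical counter paradigm behind existing lower bounds for simplex pivot rules (cf.~\cite{friedmann_subexponential_2011,disser_exponential_2023,disser_unified_2023}), adapted to sink parity games via the standard transformation and the reduction of~\cite{maat2025strategyimprovementsimplexalgorithm}. Each bit is represented by a small cycle-selection sub-game, and deceleration gadgets between bits ensure that the algorithm must fully process one bit before touching the next. The gadgets are designed so that, throughout the execution, the number $k=|\Gamma^+(B)|$ of improving switches takes only $O(1)$ distinct values. This is essential because the rule's choice at any basis is completely determined by $(k,n,h)$ through the fixed function~$P$, so restricting the range of~$k$ shrinks the effective input space of the rule.

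Given the gadget structure, the next step is to assign edge/variable indices adversarially. Since Definition~\ref{def:rankingbased} forces~$\Pi^P$ to act only on ranks, for each situation the rule encounters I can arrange the indexing so that the rank $P(k,n,h)_1$ selected by~$\Pi^P$ corresponds precisely to the ``bit-flip'' edge that advances the counter by one. An induction over the $2^B$ counter states, supported by the invariant that every visited $(k,h)$-pair corresponds to a unique phase of a unique bit-flip, shows that~$\Pi^P$ visits all $2^B$ counter values.

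The hardest part will be ruling out that~$\Pi^P$ leverages its memory to leap past counter states. Throughout the run, the pair $(k,h)$ takes at most $n\cdot\ell(n) = o(n^2/\log n)$ distinct values, so any execution of length~$T$ realises at most $o(n^2/\log n)$ distinct $(k,h)$-transition instances. The symmetry clause of Definition~\ref{def:rankingbased} then forces any two visits in the same $(k,h)$-state to perform matching switches; in our construction, this either collapses two supposedly distinct counter configurations into the same one, or violates the monotone progression of valuations under strategy improvement. Choosing $B(n)$ just above $\log(n\cdot\ell(n))$ then makes $2^{B(n)}$ exceed the total number of $(k,h)$-triples the rule can access, forcing a contradiction whenever the execution is too short and yielding $B(n)=\omega(\log n)$, hence the theorem. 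Making this memory-pigeonhole argument fully rigorous, interleaved cleanly with the inductive progression through counter states, is where the bulk of the technical work will lie.
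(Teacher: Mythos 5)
Your proposal matches the paper's high-level strategy (reduce to sink parity games via Theorem~\ref{thm:PGreduction}, build a binary counter, use controller-like gadgets to pin down $|\Gamma^{+}(B)|$, assign indices adversarially), but the step you flag as ``the hardest part'' is the one that breaks, and it is precisely where the paper's actual argument diverges from yours.

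Your memory-pigeonhole argument does not hold. You argue that the $(k,h)$-pair ranges over $o(n^2/\log n)$ values, so a long run must revisit some $(k,h)$-pair, and that by the symmetry clause of Definition~\ref{def:rankingbased} this forces ``matching switches'' which collapse counter configurations or violate the monotonicity of strategy improvement. Neither conclusion follows. The symmetry condition forces the rule to select the same \emph{rank} on repeated $(k,h)$-pairs, not the same edge: the sets $\Gamma^{+}(B)$ at the two visits are different (strategy improvement never revisits a strategy), so the same rank names a different improving edge each time, and the trajectories diverge as they should. In fact, the paper's construction deliberately revisits the same $(k,h)$-pair at exponentially many distinct strategies; this causes no contradiction whatsoever, it is the engine of the lower bound. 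So there is no basis for the claimed contradiction, and the concluding inequality $B(n)=\omega(\log n)$ is not established.

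The paper's actual handle on the memory is quite different. The controller gadget keeps $k$ \emph{exactly} constant at $m_i/3$, so the rule's chosen ranks become a function of $h$ alone, hence an eventually periodic sequence $(g_1,\dots,g_{\ell_i'})$ with period at most $\ell_i=o(m_i/\log m_i)$. The key structural insight (Lemma~\ref{lem:clustereddispersed}) is a dichotomy: any such short sequence in $[m_i/3]$ is either ``clustered'' (its support fits in small intervals, each with outputs crowded near one endpoint) or ``dispersed'' (there is a large interval in which all outputs sit at one endpoint). Each alternative supports a tailored gadget construction -- multiplier gadgets that duplicate edges so that all ranks inside a cluster hit copies of the same Bland edge (Lemma~\ref{lem:clustered}), or delayer gadgets that stretch a bit-flip across an entire memory cycle so that the single useful output per period is all the counter needs (Lemma~\ref{lem:dispersed}); filler gadgets absorb the remaining ranks. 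That dichotomy, and the gadget constructions it enables, is the missing core of your argument. You would need to replace the pigeonhole step with this kind of structural analysis of the rule's periodic rank sequence, together with concrete gadgets that realize the counter under each structural alternative, and also handle the transient prefix before the memory enters its cycle (Section~\ref{sec:outsidecycle}).
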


Because of the combinatorial nature of index-based pivot rules, they can also be interpreted as improvement rules for strategy iteration in parity games. Then, the input~$(p,m,h_1)$ of an~$\ell$-index-selector function~$P$ consists of the number~$p$ of improving edges available at the current strategy, the number~$m$ of player 0 controlled edges, and the current memory state~$h_1$. Note that here, and in the remainder of this section,~$m$ denotes the number of player 0 edges in a sink parity game, which corresponds by \cref{thm:PGreduction} to the number of variables in its related LP (which was denoted by $n$ before). 
This allows one to infer \cref{thm:blandmemory} from the following statement for parity games.

\begin{theorem} \label{thm:PGlordering}
Suppose~$\ell(m)=o\left(\frac{m}{\log (m)}\right)$. Then, for any~$\ell$-index-based improvement rule~$\Pi^P$, there is a family~$(\mathcal{G}_m)_{m\in\N}$ of sink parity games $\mathcal{G}_m$ with~$m$ player 0 edges, such that strategy improvement with pivot rule~$\Pi^{P}$ requires $m^{\omega(1)}$ iterations to find the optimal strategy.
\end{theorem}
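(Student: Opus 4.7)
The plan is to adapt a binary counter construction of the type underlying classical lower bounds for strategy improvement (e.g.,~\cite{Friedmann:2009,friedmann_subexponential_2011}). For each $m$, I would construct a sink parity game $\mathcal{G}_m$ encoding a $b(m)$-bit binary counter, where $b(m) = \omega(\log m)$ so that $2^{b(m)} = m^{\omega(1)}$. The counter is designed so that each increment corresponds to a prescribed sequence of improving switches that must fire in a specific order, and so that the overall execution must traverse all $2^{b(m)}$ counter states before the strategy becomes optimal. The technical content of the lower bound is then showing that any $\ell$-index-based rule $\Pi^P$ is in fact forced to perform these prescribed switches.

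The principal new difficulty, relative to the memoryless case, is that $\Pi^P$ may query any rank among the improving edges, depending on its current memory state. The key idea is to decouple \emph{which} player 0 edge must fire from \emph{what rank} it occupies among the improving edges at the current strategy. Concretely, I would pad each ``active'' position of the counter with a small pool of auxiliary improving edges whose global indices are free parameters of the construction. At the iteration where the counter needs a specific switch $e$ to be selected, the current triple $(|\Gamma^+(B)|,m,h)$ is fully determined by the construction, and the rule's choice of rank $r = P(|\Gamma^+(B)|,m,h)$ is therefore a single predetermined value. It then suffices to choose indices so that the $r$\textsuperscript{th} smallest improving index at that step is exactly $e$, which can be arranged by inserting at most one auxiliary edge between $e$ and the competing improving edges.

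To carry this out given $\Pi^P$, I would simulate $P$ symbolically along the putative execution path: record, at each step, the triple $(|\Gamma^+|,m,h)$ that arises, infer the rank $r$ chosen by $P$, and select the global labeling so that the desired switch occupies that rank. The symmetry condition of \cref{def:rankingbased} is what allows this reasoning: the rule's behavior is fully determined by the rankings and the memory, so once these are fixed by the construction, the actual edges the rule picks are forced. A pigeonhole budget argument bounds the number of distinct auxiliary edges needed: at most $m\cdot \ell(m)$ triples $(|\Gamma^+|,m,h)$ are possible, and within each, $P$ picks a single rank, so one auxiliary edge per bit per memory state suffices. With $\ell(m) = o(m/\log(m))$ and $b(m)\approx m/\ell(m)=\omega(\log m)$ bits, the total number of player 0 edges used remains $O(m)$.

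The main obstacle I expect is global consistency of the index assignment: the same auxiliary edge persists across many iterations, and its global index must cooperate with $P$'s deterministic rank query in \emph{every} iteration where it is improving. I would resolve this by partitioning the game into modular bit-gadgets with disjoint index ranges, arranging that at any step at most one auxiliary edge per gadget is improving, and then proving by induction on the counter value that the desired switch is indeed the one of rank $r$ at each step. Once \cref{thm:PGlordering} is established, \cref{thm:blandmemory} follows by applying \cref{thm:PGreduction} together with the standard transformation to make priorities unique.
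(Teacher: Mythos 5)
Your high-level plan matches the paper's: build a binary counter parity game sized so that $2^{b(m)}=m^{\omega(1)}$ and force $\Pi^P$ through the increment sequence. The budget computation ($b(m)\approx m/\ell(m)=\omega(\log m)$ bits, $O(m)$ edges total) is also correct. But the core step --- simulate $P$ symbolically along the execution path and pick the global index assignment so the required switch has the queried rank at each step --- is circular in a way the proposal does not resolve. The path, and hence the sequence of triples $(|\Gamma^+|,m,h)$ you feed to $P$, depends on the index assignment you are trying to determine \emph{from} that path. You flag this as "the main obstacle," but the proposed remedy (modular bit-gadgets with disjoint index ranges, induction on the counter value) only restates the desired invariant; it is not a mechanism that breaks the circularity, and in particular it gives you no control over the memory trajectory of $P$, since $|\Gamma^+|$ is allowed to vary during your run.

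The paper breaks the circularity with an idea absent from your sketch: a controller gadget (Figure~\ref{fig:ControllerFiller}) attached to every player $0$ edge keeps $|\Gamma^+|$ at a fixed constant throughout the run. Once $|\Gamma^+|$ is pinned, the memory trajectory of $P$ is a fixed cyclic sequence of length at most $\ell_i$, so the sequence $g_1,\ldots,g_{\ell_i'}$ of ranks $P$ will query over one period is known \emph{before} any indices are assigned. The second missing ingredient is a structural dichotomy on that rank sequence (Definitions~\ref{def:clustered} and~\ref{def:dispersed}, Lemma~\ref{lem:clustereddispersed}: every such sequence is either clustered or dispersed) together with gadgets tailored to each case: multiplier gadgets inflate counter edges so that any clustered rank query collapses to the Bland choice, and delayer gadgets stretch each logical counter switch over an entire memory period so that queries outside the main interval are absorbed by decoy switches. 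Your auxiliary-edge idea is a weaker cousin of the multiplier gadget, but without controller gadgets to freeze $|\Gamma^+|$ and without the clustered/dispersed case analysis I do not see how your induction survives $2^{b(m)}$ steps under a single globally fixed index assignment.
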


Since the number~$N$ of non-zero input values is polynomial in~$n$ in \cref{thm:blandmemory}, and polynomial in~$m$ in \cref{thm:PGlordering}, the algorithms also take $N^{\omega(1)}$ iterations, implying \cref{thm:blandmemoryInformal}. We construct these parity games in several steps, starting from special cases and gradually introducing further gadgets to cover the general setting. We start with the basic case of Bland's rule. This well-known pivot rule in LPs picks the improving variable with the lowest index to enter the basis. To use this rule in a sink parity game, we need to assign each player 0 edge a \emph{Bland number}, indicating its index.
Using the notation for index-based improvement rules, Bland's rule is~$\Pi^P$, where~$P$ is a selector function such that the first element of its output is always~$1$, independent of the input.
%The following lemma is therefore a special case of \cref{thm:PGlordering}. 
%We analyze the structure of an exponential example for Bland's rule, that serves as basis for later constructions. 
%We simultaneously show a statement about valuations that we need later.

The games that we use to prove \cref{thm:PGlordering} all have the same base structure. For every~$n\in\N$, we define a sink parity game~$G_n$, which is shown in Figure~\ref{fig:BjorklundCounter}.

\begin{restatable}{lemma}{alternate}
\label{lem:alternate}
Strategy improvement with Bland's rule takes~$2^n-1$ iterations to solve the sink parity game~$G_n$ with initial strategy~$\sigma_0$.
Moreover, the order of the valuations of the nodes~$a_1$ and~$b_1$ changes in every iteration. 
\end{restatable}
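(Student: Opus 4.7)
The plan is to set up a bijection between the strategies visited and the integers $\{0,1,\ldots,2^n-1\}$ written in binary, and then show that Bland's rule forces strategy improvement to simulate the binary counter one increment at a time. Denote by $\sigma_k$ the strategy corresponding to integer $k$, with the given $\sigma_0$ corresponding to $0$; the encoding should be dictated by the gadget structure of $G_n$ so that bit $i$ of $k$ is determined by the player-0 choice in the bit-$i$ subgadget built from $a_i$ and $b_i$.

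The first step is to compute $\val_{\sigma_k}$ explicitly on all nodes of $G_n$ as a function of $k$. For this it suffices to compute, for each bit $i$, which of $a_i, b_i$ is reached first along the path induced by $\sigma_k$ together with player $1$'s best response, and to push that information through the chain of gadgets. In particular, I would derive a clean formula distinguishing $\val_{\sigma_k}(a_1)$ and $\val_{\sigma_k}(b_1)$ whose order is controlled by the parity of the lowest bit of $k$, giving $\val_{\sigma_k}(a_1)\tr\val_{\sigma_k}(b_1)$ for $k$ of one parity and $\val_{\sigma_k}(a_1)\tl\val_{\sigma_k}(b_1)$ for the other. Since the lowest bit flips on every increment $k\mapsto k+1$, this yields the ``alternation'' claim.

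The second step is the improving-switch analysis. For each $\sigma_k$ with $k<2^n-1$, I would identify the set of improving switches and show that, under the intended Bland numbering (small indices to low-bit edges, in the natural order suggested by the construction), the Bland-minimal improving switch is exactly the one that transforms $\sigma_k$ into $\sigma_{k+1}$. The intended transformation is the standard binary increment: flip the lowest bit $j$ that is currently $0$ in $k$, and reset all lower bits to the configuration encoded in $\sigma_{k+1}$. Here I use the valuation formula from the first step to check both that the required edge is improving and that no edge with smaller Bland number is improving at $\sigma_k$.

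Combining these two steps by induction on $k$ gives that after $k$ iterations the algorithm is at $\sigma_k$, so the optimum $\sigma_{2^n-1}$ is reached after exactly $2^n-1$ iterations, and between any two consecutive strategies the order of $\val(a_1)$ and $\val(b_1)$ flips. The main obstacle is the case analysis in the second step: I need to verify that at no strategy $\sigma_k$ does a higher-bit gadget produce an improving switch whose Bland number undercuts the intended bit-flip, which typically requires carefully tracking how reset operations on lower bits interact with the valuations of edges sitting between the bit gadgets. Once the gadget and numbering are such that this interaction is monotone in the bit position, the inductive chain closes and the lemma follows.
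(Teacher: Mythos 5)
Your overall plan---set up a bijection with iteration count, compute valuations explicitly, then verify that Bland's rule always takes the intended switch---is a reasonable alternative to the paper's argument, which instead proceeds by induction on $n$: removing level $1$ yields $G_{n-1}$, the induction hypothesis gives alternation at $(a_2,b_2)$, and Bland's rule interleaves level-$1$ switches with the residual $G_{n-1}$ run, yielding $2^{n-1}+(2^{n-1}-1)$ switches. The paper's route avoids all explicit valuation formulas, which is why it is considerably shorter than what your plan would require.

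There is, however, a genuine gap in your bijection. You encode ``bit $i$ of $k$ is the player-$0$ choice at $a_i$'' and describe the transition $\sigma_k \to \sigma_{k+1}$ as a ``standard binary increment: flip the lowest bit $j$ that is currently $0$ in $k$, and reset all lower bits.'' But one iteration of strategy improvement changes exactly one player-$0$ decision, i.e., exactly one bit. A binary increment from $k$ to $k+1$ flips a whole run of trailing bits whenever $k$ is odd, so it \emph{cannot} be realized by a single improving switch. Tracing $G_2$ shows the discrepancy: the sequence of $(a_1,a_2)$-choices is $(0,0),(1,0),(1,1),(0,1)$, which as binary numbers (low bit first) is $0,1,3,2$ --- not $0,1,2,3$. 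What the run actually follows is the reflected Gray code of the iteration counter, $\textup{gray}(k)=k\oplus(k\gg 1)$, under which consecutive strategies differ in exactly one bit and that bit sits precisely at the position you identified (the number of trailing ones of $k$). With the Gray-code bijection your ``reset all lower bits'' step disappears (it is a no-op), the one-switch-per-$k$-increment claim becomes true, and your parity-of-$k$ alternation argument goes through. Without this correction, the inductive chain ``after $k$ iterations the algorithm is at $\sigma_k$'' fails already at $k=2$.
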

\begin{figure}[h] 
    \centering
    \resizebox{\linewidth}{!}{%
    \begin{tikzpicture}[node distance = 3cm,
    p0/.style={circle, draw=p0Blue!60, fill=p0Blue!40, thick, minimum size=15mm,inner sep=0pt, outer sep=0pt}, % player 0 nodes
    every path/.style={draw=p0Blue,line width=1.3pt,{>=Latex}}, % default: edges are blue
      p1/.style={rectangle, draw=p1Red!60, fill=p1Red!40, thick, minimum size=15mm,inner sep=0pt, outer sep=0pt}, % player 1 nodes
      phan/.style={draw=none, fill=none, inner sep=0pt, outer sep=0pt}] % phantom nodes

    % nodes
    \node[p0] (a1) at (0,0) {$\begin{array}{c}a_1 \\ 3 \end{array}$};
    \node[p1] (b1) at (0,4) {$\begin{array}{c}b_1 \\ 4 \end{array}$};
    \node[p0] (a2) at (4,0) {$\begin{array}{c}a_2 \\ 5 \end{array}$};
    \node[p1] (b2) at (4,4) {$\begin{array}{c}b_2 \\ 6 \end{array}$};
    \node[phan] (phan1) at (8,0) {}; % invisible
    \node[phan] (phan2) at (8,4) {}; % invisible
    \node[phan] (phan3) at (10,0) {}; % invisible
    \node[phan] (phan4) at (10,4) {}; % invisible    
    \node[p0] (a5) at (14,0) {$\begin{array}{c}a_n \\ 2n+1 \end{array}$};
    \node[p1] (b5) at (14,4) {$\begin{array}{c}b_n \\ 2n+2 \end{array}$};
    \node[p0] (a6) at (18,0) {$\top$};
    \node[p1] (b6) at (18,4) {$\begin{array}{c}b_{n+1} \\ 2n+4 \end{array}$};
    % edges at start
    \draw (a1) edge[->, line width=3] node [pos=.3,below] {$1$} (a2);
    \draw (a1) edge[->] node [pos=.2,right] {$2$} (b2);
    \draw (b1) edge[->,p1Red] (b2);
    \draw (b1) edge[->,p1Red] (a2);
    \draw (a2) edge[->, line width=3] node [pos=.3,below] {$3$} ($(a2)!0.8!(phan1)$);
    \draw (a2) edge[->] node [pos=.2,right] {$4$} ($(a2)!0.8!(phan2)$);
    \draw (b2) edge[->,p1Red] ($(b2)!0.8!(phan1)$);
    \draw (b2) edge[->,p1Red] ($(b2)!0.8!(phan2)$);

    \draw (phan1) edge[-, black, dotted] (phan3);
    \draw (phan2) edge[-, black, dotted] (phan4);

    %edges at end
    \draw ($(phan3)!0.2!(a5)$) edge[->, line width=3] (a5);
    \draw ($(phan3)!0.2!(b5)$) edge[->] (b5);
    \draw ($(phan4)!0.2!(a5)$) edge[->, p1Red] (a5);
    \draw ($(phan4)!0.2!(b5)$) edge[->, p1Red] (b5);
    \draw (a5) edge[->, line width=3] node [pos=.3,below] {$2n-1$} (a6);
    \draw (a5) edge[->] node [pos=.2,right] {$2n$} (b6);
    \draw (b5) edge[->,p1Red] (b6);
    \draw (b5) edge[->,p1Red] (a6);
    \draw (b6) edge[->,p1Red] (a6);
    \draw (a6) edge[loop right, line width=3] (a6);
    
\end{tikzpicture}}
    \caption{Binary counter parity game~$G_n$, adapted from \cite{bjorklund_combinatorial_2007}. Circles represent player 0 nodes and the sink, and squares represent player 1 nodes. Priorities are shown below the names of the nodes. The initial strategy~$\sigma_0$ is shown in bold, and the Bland numbers are given on the edges.}
    \label{fig:BjorklundCounter}
\end{figure}
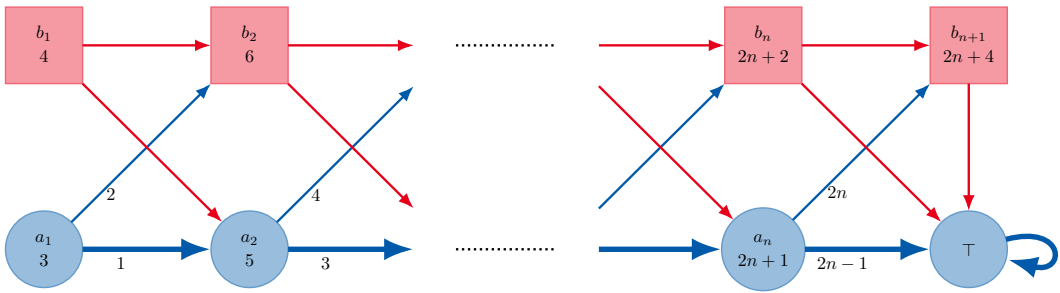

\subsection{Clustered output}
We now fix some notation to argue asymptotically about the number of memory states allowed. In \cref{thm:PGlordering} we allow for~$o(\frac{m}{\log(m)})$ memory states for parity games with~$m$ player 0 edges. By definition of little-o notation, this implies that there exists an increasing sequence~$(m_i)_{i\in\N}$ of natural numbers~$m_i\in\N$ such that, if~$\ell_i$ denotes the number of memory states allowed for parity games with~$m_i$ player 0 edges, then we have~$\ell_i\leq \frac{m_i}{3i\log (m_i)}$ for all~$i\in\mathds{N}$. We can additionally assume that~$m_i$ is divisible by 3 for every~$i$, and that~$m_1\geq 16$ such that~$m_i\geq 12\ell_i$ for all~$i$. We use this notation throughout the rest of this section.

%For this construction the order of the choices of the improvement rule will be important. 
We assume without loss of generality that the initial state of the memory of~$P$ is 1. We define the following sequence:
\begin{equation*}
        h_1=1, \qquad (g_j,h_{j+1})=P\left(\frac{m_i}{3},m_i,h_j\right), \quad \forall\; 1\leq j< \ell_i',
\end{equation*}
%\begin{eqnarray*}
%    h_1&=&1\\
%    (g_j,h_{j+1})&=&P\left(\frac{m_i}{3},m_i,h_j\right), \qquad \forall\; 1\leq j< \ell_i',
%\end{eqnarray*}
where~$\ell_i'$ is the largest possible number such that the sequence~$h_1,h_2,\ldots,h_{\ell_i'}$ contains only unique elements. That means that~$P(\frac{m_i}{3},m_i,h_{\ell_i'})=(s,h_{j})$ for some~$j\leq \ell_i'$ and some~$s$. 
For now, we assume that this~$j$ is equal to 1, which means that, if the number of improving moves stays constant at~$\frac{m_i}{3}$, the chosen indices of the improvement rule will be~$(g_1,g_2,\ldots,g_{\ell_i'},g_1,g_2,\ldots,g_{\ell_i'},g_1,g_2,\ldots)$. The other case, where~$P(\frac{m_i}{3},m_i,h_{\ell_i'})\neq(s,h_{1})$ for all~$s$, is dealt with in \cref{sec:outsidecycle}. Before continuing, we need a technical definition.
%Now we make a construction for a slightly more general case. 
%We consider functions where, for a fixed value of~$p$, the possible values~$s$ from the output are contained within a small number of clusters. 
%We consider selector functions whose output has a certain structure, which we make more precise with the following definition.

\begin{definition}\label{def:clustered}
    Let~$m,\ell\in\mathds{N}$ such that~$m\geq 4\ell$. Let~$\mathcal{S}=(i_1, i_2, \ldots, i_{\ell'})\in[m]^{\ell'}$ be a sequence of length~$\ell'\leq \ell$, and let~$I=\{i_1, i_2, \ldots, i_{\ell'}\}$. 
    We say that~$\mathcal{S}$ is \emph{$(m,\ell)$-clustered} if there exists some~$k\in\mathds{N}$ and integers~$p_1, p_2, \ldots, p_k\in [m]$ and~$q_1, \ldots, q_k\in \mathds{N}$ such that
        \begin{romanenumerate}
            \item the intervals~$[p_1, p_1+q_1], [p_2, p_2+q_2], \ldots,[p_k, p_k+q_k]$ are pairwise disjoint and contained in the interval~$[1,m]$, and the union of these intervals contains~$I$,
            \item for every~$c\in[k]$, and for~$K_c \coloneqq [p_c,p_c+q_c]\cap I$, we have
            \[
            q_c+1\geq \big\lfloor\frac{m}{2\ell}\big\rfloor\cdot \min\left(\max(K_c)-p_c+1,p_c+q_c-\min(K_c)+1\right),
            \]
            \item~$            \sum_{j=1}^{k}(q_j+1)+2(\ell-\ell')\leq m.
           ~$
    \end{romanenumerate}
\end{definition}

We use this notion to define a class of improvement rules, by saying that the~$\ell$-index-selector function~$P$ is~$(m_i,\ell_i)$-clustered if the sequence~$g_1,g_2, \ldots, g_{\ell_i'}$ is~$(\frac{m_i}{3},\ell_i)$-clustered\footnote{This is well-defined since~$m_i\geq 12\ell_i\geq 12\ell_i'$ and therefore~$\frac{m_i}{3}\geq 4\ell_i'$.}. Intuitively, this means the preferences of the improvement rule lie in small clusters. That allows us to state the following result.

\begin{restatable}{lemma}{clustered}\label{lem:clustered}
    Suppose we have an~$\ell$-index-based improvement rule~$\Pi^P$, suppose~$m_i$ is divisible by 3,~$\ell_i:=\ell(m_i)$, and~$m_i\geq 12\ell_i$.  If~$P$ is~$(m_i,\ell_i)$-clustered, then there exists a parity game with~$m_i$ player 0 edges, which takes at least~$2^{\frac{m_i}{12\ell_i}-1}$ iterations to solve for SI with improvement rule~$\Pi^P$.
\end{restatable}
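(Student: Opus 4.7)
The plan is to simulate Björklund's binary counter inside a larger parity game. Set $n=\lfloor m_i/(12\ell_i)\rfloor$ and start from the game $G_n$ of Figure~\ref{fig:BjorklundCounter}; by Lemma~\ref{lem:alternate}, solving $G_n$ with Bland's rule requires $2^n-1$ iterations. I will embed $G_n$ into a larger sink parity game $G^{*}$ with exactly $m_i$ player 0 edges, together with a Bland numbering of its edges, such that at every strategy visited by strategy improvement under $\Pi^P$ the set of improving edges has cardinality exactly $m_i/3$ and the $g_j$-th improving edge in Bland order (at memory state $h_j$) is precisely the edge that Bland's rule would pick on $G_n$ at the corresponding iteration. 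Then each step of $\Pi^P$ on $G^{*}$ imitates a step of Bland's rule on $G_n$, forcing at least $2^n-1\ge 2^{m_i/(12\ell_i)-1}$ iterations.

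The augmentation from $G_n$ to $G^{*}$ consists of attaching \emph{dummy} player 0 gadgets whose outgoing player 0 edges are always improving but whose switches do not alter the valuations of the counter nodes, e.g., player 0 vertices whose edges lead directly to $\top$ (or through a single player 1 vertex), with priorities chosen so that their contribution to any valuation is dominated by the counter. These dummies simultaneously pad the total number of player 0 edges up to $m_i$ and populate Bland-index slots surrounding the useful counter edges. The clustered structure of the sequence $(g_1,\dots,g_{\ell_i'})$ then dictates how the slots are allocated: for each cluster $[p_c,p_c+q_c]$, I reserve a contiguous block of $q_c+1$ Bland indices, in which one designated useful counter edge is placed at the endpoint near which $K_c$ is concentrated and the remaining positions are filled with dummies. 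Condition~(ii) of Definition~\ref{def:clustered} guarantees that each block is large enough, relative to the one-sided span of $K_c$, that the rank of the useful edge within $\Gamma^{+}(B)$ can be tuned to any prescribed value $g_j\in K_c$ by activating or deactivating nearby dummies, while condition~(iii) guarantees that all cluster blocks together, plus $2(\ell_i-\ell_i')$ buffer slots for memory transitions that are never executed, still leave the total number of improving edges at exactly $m_i/3$ as required.

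The principal obstacle will be to coordinate (a) keeping the cardinality of improving edges pinned at $m_i/3$, (b) aligning the rank of the currently useful edge with $g_j$ in synchrony with the memory cycle of $\Pi^P$, and (c) ensuring that no dummy switch disrupts the alternation of the valuations of $a_1$ and $b_1$ in the embedded $G_n$ guaranteed by Lemma~\ref{lem:alternate}. I expect this to be handled by an induction along the iterations of strategy improvement, running in parallel with the inductive analysis of Björklund's counter, in which the counter state and the set of currently improving dummies co-evolve so as to preserve both the embedded Björklund dynamics and the prescribed Bland-rank alignment for the subsequent picks of $\Pi^P$.
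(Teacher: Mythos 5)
The proposal captures the right high-level intuition (embed the Bj\"orklund counter, pad the game with dummy gadgets so that the Bland ranks of the useful edges match the outputs $g_j$), but it misses the one mechanism that makes the alignment actually work: the \emph{multiplier} gadget. Without it, the plan reduces to "tune the rank of a single designated counter edge to the exact value $g_j$ by activating or deactivating nearby dummies", and that step is both unexplained and genuinely hard. The rank of an edge in $\Gamma^{+}$ changes only when the \emph{set} of improving edges changes, which in turn is a consequence of earlier switches; you cannot dial it to an arbitrary prescribed value $g_j$ on demand. Condition~(ii) of Definition~\ref{def:clustered} does not say the $g_j$'s in a cluster are equal -- it only says they are all within the first (or last) $\bigl\lfloor (q_c+1)/\lfloor m_i/(6\ell_i)\rfloor\bigr\rfloor$ positions of the interval. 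The paper's construction is robust against exactly that variability: by replacing each counter edge with that many indistinguishable copies (Figure~\ref{fig:duplicate}), \emph{any} pick in those positions advances the counter by one Bj\"orklund step, with no need to hit the prescribed rank on the nose. Your proposal provides no substitute for this robustness.

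Two further concrete difficulties. First, your phrase ``dummy gadgets whose outgoing player 0 edges are always improving'' is not achievable: once an improving switch is applied, that edge is no longer improving. The paper solves this with the controller gadget of Figure~\ref{fig:GadgetConstantPG}, which creates exactly one improving move at all times per original edge, thereby pinning $|\Gamma^{+}|=m_i/3$; you would need to spell out an analogous invariant. Second, and more structurally, you work with a \emph{single} counter $G_n$ while the outputs $g_1,\dots,g_{\ell_i'}$ may be scattered across several clusters $[p_1,p_1+q_1],\dots,[p_k,p_k+q_k]$. The paper handles this by placing an independent copy of the (multiplied, controlled) counter in \emph{each} cluster, so that every pick $g_j$ advances the counter belonging to the cluster that contains $g_j$; the lower bound then follows because at least one of these counters must be driven through all $2^M-1$ states. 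In your single-counter scheme it is unclear which picks advance the counter and which hit dummies, and you would have to argue separately that the dummy picks never stall the run, never perturb the $a_1$/$b_1$ alternation, and never change the cardinality of $\Gamma^{+}$. These are precisely the coordination problems you flag at the end, and the paper's combination of multiplier, controller, and filler gadgets resolves all three simultaneously; your proposal leaves them open.
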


The proof idea of \cref{lem:clustered} is as follows: since~$P$ is~$(m_i,\ell_i)$-clustered, this means that the outputs of $P$ are contained in intervals $[p_c,p_c+q_c]$, where the outputs of~$P$ are either all close to the start or to the end of the interval. We make a copy of the binary counter from \cref{fig:BjorklundCounter} for each interval, and we replace each player 0 edge a fixed number of copies of itself. This artificially increases the number of improving moves, so that it suffices for the pivot rule to choose the indices \emph{close} to the smallest index in each copy of the binary counter.

    Furthermore, we need to make sure that the total number of improving moves stays constant. This is done by replacing each player 0 edge~$(x,y)$ by the controller gadget in \cref{fig:ControllerFiller}. Finally, to make sure we have exactly~$m_i$ player 0 edges in total, we add a number of filler gadgets from \cref{fig:ControllerFiller}, carefully giving them the right Bland numbers. 
    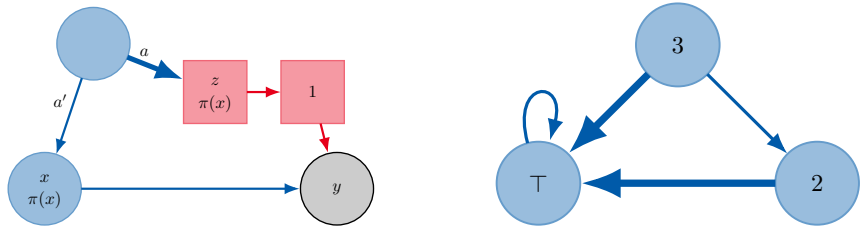
\begin{figure}[t]
        \centering
        \resizebox{0.35\linewidth}{!}{%
        \begin{tikzpicture}[node distance = 3cm,
    p0/.style={circle, draw=p0Blue!60, fill=p0Blue!40, thick, minimum size=15mm,inner sep=0pt, outer sep=0pt}, % player 0 nodes
    gen/.style={circle, draw=black, fill=black!20, thick, minimum size=15mm}, % no color
    p0small/.style={circle, draw=p0Blue!60, fill=p0Blue!40, thick, minimum size=6mm}, % no color
    every path/.style={draw=p0Blue,line width=1.3pt,{>=Latex}}, % default: edges are blue
      p1/.style={rectangle, draw=p1Red!60, fill=p1Red!40, thick, minimum size=13mm,inner sep=0pt, outer sep=0pt}, % player 1 nodes
      phan/.style={draw=none, fill=none, inner sep=0pt, outer sep=0pt}] % phantom nodes

    % nodes
    \node[p0] (x) at (0,0) {$\begin{array}{c}x\\ \pi(x)\end{array}$};
    \node[gen] (y) at (6,0) {$y$};
    \node[p0] (z) at (1,3) { };   
    \node[p1] (v) at (3.5,2) {$\begin{array}{c}z\\ \pi(x)\end{array}$};
    \node[p1] (w) at (5.5,2) {$1$};

    % edges bottom
    \draw (x) edge[->] (y);
    \draw (z) edge[->] node[pos=.3,left] {$a'$} (x);
    \draw (z) edge[->, line width=3] node[pos=.3,above] {$a$}(v);
    \draw (v) edge[->,p1Red] (w);
    \draw (w) edge[->,p1Red] (y);

\end{tikzpicture}
        }\hspace{40pt}
    \resizebox{0.35\linewidth}{!}{%
        \begin{tikzpicture}[node distance = 3cm,
    p0/.style={circle, draw=p0Blue!60, fill=p0Blue!40, thick, minimum size=12mm,inner sep=0pt, outer sep=0pt}, % player 0 nodes
    gen/.style={circle, draw=black, fill=black!20, thick, minimum size=12mm,inner sep=0pt, outer sep=0pt}, % no color
    p0small/.style={circle, draw=p0Blue!60, fill=p0Blue!40, thick, minimum size=6mm}, % no color
    every path/.style={draw=p0Blue,line width=1.3pt,{>=Latex}}, % default: edges are blue
      p1/.style={rectangle, draw=p1Red!60, fill=p1Red!40, thick, minimum size=11mm}, % player 1 nodes
      phan/.style={draw=none, fill=none, inner sep=0pt, outer sep=0pt}] % phantom nodes

    % nodes
    \node[p0] (s) at (0,0) {$\top$};
    
    \node[p0] (x) at (4,0) {$2$};
    \node[p0] (y) at (2,2) {$3$};

    % edges bottom
     \draw (s) edge[loop above] (s);
    \draw (y) edge[->, line width=3] (s);
    \draw (x) edge[->, line width=3] (s);
    \draw (y) edge[->] (x);

\end{tikzpicture}
        }
        \caption{Left: the controller gadget, which ensures a constant number of improving switches.~$x,y$ and~$z$ are node labels, all other labels in the nodes denote priorities. Right: The filler gadget (the~$2$ and~$3$ denote priorities). The bold edges are always part of the initial strategy.}
         \label{fig:ControllerFiller}
    \end{figure}
%On the other hand, we consider the case where there is a large interval of indices that are never chosen.

We also need another technical definition.
\begin{definition}\label{def:dispersed}
    Let~$m,\ell\in\mathds{N}$ such that~$m\geq 4\ell$. Let~$\mathcal{S}=(i_1, i_2, \ldots, i_{\ell'})\in[m]^{\ell'}$ be a sequence of length~$\ell'\leq \ell$, and let~$I=\{i_1, i_2, \ldots, i_{\ell'}\}$.
    We say that ~$\mathcal{S}$ is~\emph{$(m,\ell)$-dispersed} if there exists some~$k\in\mathds{Z}_{\geq 0}$ and integers~$\psi, p_1, p_2, \ldots, p_k\in [m]$ and~$\xi , q_1, \ldots, q_k\in \mathds{N}$ such that, for~$K=I\cap [\psi,\psi+\xi]$,
    \begin{romanenumerate}
        \item the intervals~$[\psi, \psi+\xi],[p_1, p_1+q_1], [p_2, p_2+q_2], \ldots,[p_k, p_k+q_k]$ are pairwise disjoint and contained in~$[1,m]$, and the union of these intervals contains~$I$,
        \item for~$j=1,2, \ldots, k$, we have~$q_j\geq 2\cdot \left|\{c\in [\ell']:i_c\in [p_j,p_j+q_j]\}\right|-1$,
        \item either~$K=\{\psi\}$ or~$K=\{\psi+\xi\}$,
        \item~$\xi\geq \lfloor\frac{m}{2\ell}\rfloor|\{c:i_c\in K\}|-1$,
        \item~$(\xi+1)+\sum_{j=1}^{k}(q_j+1)+2(\ell-\ell')\leq m$.
    \end{romanenumerate} 
\end{definition}

We say the~$\ell$-index-selector function~$P$ is~$(m_i,\ell_i)$-dispersed if the sequence~$(g_1,g_2, \ldots, g_{\ell_i'})$ is~$(\frac{m_i}{3},\ell_i)$-dispersed. Intuitively, this means there is one big interval $[\psi,\psi+\xi]$ in which the pivot rule has almost no outputs. We prove the following lemma:
%This naturally gives rise to a class of improvement rules. We say that an improvement rule~$P$ is~$(p,l)$-dispersed if the sequence~$g_1,g_2,\ldots, g_{\ell_i'}$ is~$(p,l)$-dispersed. We prove the following lemma:
%\begin{lemma}
%    If an~$l$-index-based improvement rule~$P$ is~$(p,l)$-dispersed for all~$p\geq 4l$ and uses~$o(\frac{m}{\log(m)})$ memory states, then it cannot be polynomial.
%\end{lemma}

\begin{restatable}{lemma}{dispersed}\label{lem:dispersed}
    Suppose we have an~$\ell$-index-based improvement rule~$\Pi^P$, suppose~$m_i$ is divisible by 3,~$\ell_i:=\ell(m_i)$, and~$m_i\geq 12\ell_i$.  If~$P$ is~$(m_i,\ell_i)$-dispersed, then there exists a parity game with~$m_i$ player 0 edges, which takes at least~$2^{\frac{m_i}{12\ell_i}-1}$ iterations to solve for SI with improvement rule~$\Pi^P$.
\end{restatable}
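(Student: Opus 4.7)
The plan is to mirror the proof of \cref{lem:clustered}, but to replace its ``many small clusters'' construction by a single large binary counter whose player~0 edges receive Bland numbers inside the big interval $[\psi,\psi+\xi]$ produced by the dispersed structure. The small intervals $[p_j,p_j+q_j]$ and the slack granted by condition (v) of \cref{def:dispersed} serve only to pad the numbers of Bland indices and player~0 edges to the required totals, while keeping the total improving-move count constant at $m_i/3$.

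Concretely, set $n:=\lfloor(\xi+1)/2\rfloor$. Applying condition (iv) of \cref{def:dispersed} with the parameters $m=m_i/3$, $\ell=\ell_i$ of the definition gives $\xi+1\geq\lfloor m_i/(6\ell_i)\rfloor$, so $n\geq m_i/(12\ell_i)$ up to rounding. Take the binary counter $G_n$ of \cref{fig:BjorklundCounter} and assign the Bland numbers of its $2n$ player~0 edges to distinct values inside $[\psi,\psi+\xi]$, preserving on each main/auxiliary pair the relative ordering used in the figure. In the case $K=\{\psi\}$ we arrange the assignment so that the $\psi$-th smallest improving Bland index inside the interval is always exactly the edge that Bland's rule would pick next on $G_n$; in the case $K=\{\psi+\xi\}$ we reverse the assignment (equivalently, work with a mirrored counter) so that the $(\psi+\xi)$-th smallest improving index plays the same role.

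Then, exactly as in \cref{lem:clustered}, replace every player~0 edge of the resulting game by the controller gadget of \cref{fig:ControllerFiller} and attach enough filler gadgets to bring the total number of player~0 edges up to $m_i$ while keeping the number of improving moves constantly equal to $m_i/3$. Conditions (i), (ii), and (v) of \cref{def:dispersed} supply the combinatorial room: the small intervals $[p_j,p_j+q_j]$ are handled as individual clusters in the spirit of \cref{lem:clustered}, while the inequality $(\xi+1)+\sum_{j=1}^{k}(q_j+1)+2(\ell_i-\ell_i')\leq m_i/3$ in (v) lets us distribute the Bland numbers of the controller and filler edges on either side of $[\psi,\psi+\xi]$ without collision. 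We arrange the distribution so that, throughout the run, there are always exactly $\psi-1$ improving moves with Bland number below $\psi$ (respectively, exactly $m_i/3-\psi-\xi$ above $\psi+\xi$ in the other case).

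With this assignment, the $\psi$-th (resp.\ $(\psi+\xi)$-th) smallest improving index at every iteration is precisely the edge Bland's rule would pick on $G_n$. Hence strategy improvement with pivot rule $\Pi^P$ on the constructed game simulates Bland's rule on $G_n$ step for step and, by \cref{lem:alternate}, requires at least $2^n-1\geq 2^{m_i/(12\ell_i)-1}$ iterations. The main obstacle is the bookkeeping in the previous paragraph: one must verify that the controller and filler gadgets can be wired so that, as the set of improving moves fluctuates during the run, the $\psi$-th slot in Bland order always lands on the correct main counter edge. This is the same kind of invariant maintained in \cref{lem:clustered}, adapted to the asymmetric ``one big interval plus a few small intervals'' structure enforced by \cref{def:dispersed}, and can be reproduced with only minor modifications.
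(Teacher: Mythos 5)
There is a genuine gap: your proposal does not address the central difficulty of the dispersed case, namely that most of the outputs $g_1,\dots,g_{\ell_i'}$ of the rule $P$ within one memory cycle fall \emph{outside} the big interval $[\psi,\psi+\xi]$. Since $K=I\cap[\psi,\psi+\xi]$ is a singleton, the rule hits the big interval only $|\{c:g_c\in K\}|$ times per cycle; all remaining outputs land in the small intervals $[p_j,p_j+q_j]$, which by condition (ii) of \cref{def:dispersed} have size only $q_j+1\le 2\ell_i$. You propose to treat these small intervals ``as individual clusters in the spirit of \cref{lem:clustered},'' but any gadget confined to an interval of size $O(\ell_i)$ can absorb only $O(\ell_i)$ improving switches before it is exhausted, while the run must last on the order of $2^{m_i/(12\ell_i)}$ iterations. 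Once a small-interval decoy is exhausted, the number of improving moves drops, the invariant ``the $\psi$-th slot in Bland order is the correct counter edge'' breaks, and the construction fails. You never explain how the small intervals keep producing improving moves for the entire (exponential) run; this is exactly the bookkeeping you wave away as ``minor modifications.''

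The paper's proof overcomes this with a mechanism you do not introduce: the delayer gadget of \cref{fig:GadgetConsecutive}. The main binary counter $G_M$ is slowed so that one real step takes $|\{c:g_c\in K\}|$ iterations; and, crucially, additional copies of the delayer gadget are wired so their $x$ and $y$ vertices are $b_1$ and $a_1$ of the counter. By \cref{lem:alternate}, $a_1$ and $b_1$ swap which has the larger valuation after each real step of the counter, so the delayer gadgets attached to them are ``reset'' every cycle, continuously regenerating exactly the right number of decoy improving moves for the small-interval outputs. Without this regeneration idea (or an equivalent one), your plan cannot maintain the constant improving-move count, and the claimed simulation of Bland's rule on $G_n$ collapses. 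A secondary point: your choice $n=\lfloor(\xi+1)/2\rfloor$ does not leave room for the controller gadgets (each of which triples the player-0 edge count), and the paper instead uses the smaller $M=\ML$, which fits inside $[\psi,\psi+\xi]$ even after the delayer and controller expansions precisely because of the factor $|\{c:g_c\in K\}|$ in condition (iv).
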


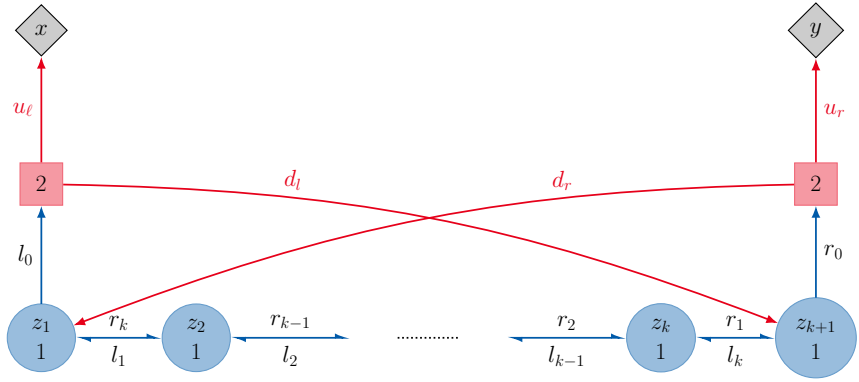
\begin{figure}[tbp]
    \centering
    \resizebox{0.8\linewidth}{!}{%
    \begin{tikzpicture}[node distance = 3cm,
    p0/.style={circle, draw=p0Blue!60, fill=p0Blue!40, thick, minimum size=15mm,inner sep=0pt, outer sep=0pt}, % player 0 nodes
    every path/.style={draw=p0Blue,line width=1.3pt,{>=Latex}}, % default: edges are blue
      p1/.style={rectangle, draw=p1Red!60, fill=p1Red!40, thick, minimum size=11mm}, % player 1 nodes
      undef/.style={diamond, draw=black, fill=black!20, thick, minimum size=11mm},
      %undefined nodes
      phan/.style={draw=none, fill=none, inner sep=0pt, outer sep=0pt}] % phantom nodes
    {\LARGE
    % nodes
    \node[p0] (a1) at (0,0) {$\begin{array}{c}
         z_1  \\
         1 
    \end{array}$};
    \node[p0] (a2) at (4,0) {$\begin{array}{c}
         z_2  \\
         1 
    \end{array}$};
    \node[phan] (phan1) at (8,0) {}; % invisible
    \node[phan] (phan2) at (12,0) {}; % invisible
    \node[p0] (a3) at (16,0) {$\begin{array}{c}
         z_k  \\
         1 
    \end{array}$};
    \node[p0] (a4) at (20,0) {$\begin{array}{c}
         z_{k+1}  \\
         1 
    \end{array}$};
    \node[p1] (b1) at (0,4) {$2$};
    \node[p1] (b2) at (20,4) {$2$};
    \node[undef] (phan3) at (0,8) {$x$}; % invisible
    \node[undef] (phan4) at (20,8) {$y$}; % invisible    

    % edges bottom
    \draw (a1) edge[harp] node [above] {$r_k$} node [below] {$l_1$} (a2);
    \draw (a2) edge[harp] node [above] {$r_{k-1}$} node [below] {$l_2$} (phan1);
    \draw ($(phan1)!0.3!(phan2)$) edge[-,black,dotted] ($(phan1)!0.7!(phan2)$);
    \draw (phan2) edge[harp] node [above] {$r_2$} node [below] {$l_{k-1}$} (a3);
    \draw (a3) edge[harp] node [above] {$r_1$} node [below] {$l_k$} (a4);

    % 1 to 2 edges
    \draw (a1) edge[->] node [left] {$l_0$} (b1);
    \draw (a4) edge[->] node [right] {$r_0$} (b2);

    % player 2 edges
    \draw (b1) edge[->,bend left=10pt,p1Red] node [pos=.3,above] {$d_l$} (a4); % down
    \draw (b2) edge[->,bend right=10pt,p1Red] node [pos=.3,above] {$d_r$} (a1);    
    \draw (b1) edge[->,p1Red] node [left] {$u_\ell$} (phan3); % up
    \draw (b2) edge[->,p1Red] node [right] {$u_r$} (phan4);
    }
\end{tikzpicture}}
    \caption{The delayer gadget. It makes a significant switch take~$k+1$ times more iterations.}
    \label{fig:GadgetConsecutive}
\end{figure}
The main proof idea for \cref{lem:dispersed} is as follows: the dispersedness tells us that there is a large interval~$[\psi,\psi+\xi] \subseteq [1,m]$, in which the the outputs of the improvement rule are either all at the start or at the end. We take one copy of the binary counter from \cref{fig:BjorklundCounter}, and then use the delayer gadget as seen in \cref{fig:GadgetConsecutive}: each player 0 node with two outgoing edges towards nodes~$x$ and~$y$ is replaced by this gadget. By doing this, it essentially delays a switch by~$k$ moves: it now takes~$k+1$ iterations to get the same result as usually one iteration would. We pick the Bland numbers such that this counter will be identified with~$[\psi,\psi+\xi]$, and we pick the parameters~$k$ from the gadgets such that each iteration of the original binary counter takes exactly as long as it takes the improvement rule to repeat its memory state. We call the time until reaching the same memory state a cycle.

In each cycle, $P$ can give many different outputs, so we need to deal with the outputs that are outside $[\psi,\psi+\xi]$. By \cref{lem:alternate}, the (replacements of)~$a_1$ and~$b_1$ alternate which of them is the best, every cycle. We attach additional copies of the delayer gadget to our game, identifying~$x$ with~$b_1$ and~$y$ with~$a_1$ in each gadget. This creates new improving moves every cycle for every output of $P$ outside the interval.
Finally, like for \cref{lem:clustered}, we use controller and filler gadgets from \cref{fig:ControllerFiller} to make sure the number of improving moves is constant and the total number of edges equals~$m_i$.

\subsection{Completing the proof} \label{sec:outsidecycle}
We combine the results of \cref{lem:clustered,lem:dispersed} with the following lemma.
\begin{restatable}{lemma}{clustereddispersed}\label{lem:clustereddispersed}
    Let~$m,\ell\in \mathds{N}$ such that~$m\geq 4\ell$. Let~$\mathcal{S}=(i_1, i_2, \ldots, i_{\ell'})\in[m]^{\ell'}$ be a sequence of length~$\ell'\leq \ell$. Then~$\mathcal{S}$ is~$(m,\ell)$-clustered or~$(m,\ell)$-dispersed.
\end{restatable}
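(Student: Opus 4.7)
The plan is a constructive dichotomy proof. Enumerate the distinct elements of $I=\{i_1,\ldots,i_{\ell'}\}$ in increasing order as $j_1<\ldots<j_s$ with multiplicities $n_c=|\{e\in[\ell']:i_e=j_c\}|$ (summing to $\ell'$), set $q\coloneqq\lfloor m/(2\ell)\rfloor$, which is at least $2$ since $m\geq 4\ell$, and introduce sentinels $j_0\coloneqq 0$ and $j_{s+1}\coloneqq m+1$ so that the one-sided adjacent gap at each element is $d_c\coloneqq\max(j_c-j_{c-1},\,j_{c+1}-j_c)$.

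The first step is to attempt an $(m,\ell)$-dispersed decomposition anchored at some $j_{c^*}$. Concretely, I would search for $c^*\in[s]$ such that $d_{c^*}\geq q n_{c^*}$ and, at the same time, the complementary space in $[1,m]$ suffices to accommodate small clusters covering $I\setminus\{j_{c^*}\}$ under the weak condition (ii) of dispersed (i.e.\ $q_j+1\geq\max(2n_j,r_j)$). When such a $c^*$ exists, I would take $[\psi,\psi+\xi]$ to be the interval adjacent to $j_{c^*}$ on its wider side, of length exactly $q n_{c^*}$ rather than the full $d_{c^*}$, with $j_{c^*}$ placed at the appropriate boundary so that $K=\{j_{c^*}\}$; conditions (i), (iii), (iv) follow directly, and (v) reduces to $(q-2)n_{c^*}+2\ell\leq m$, which holds because $q\leq m/(2\ell)$ and $n_{c^*}\leq\ell$. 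The remaining elements are then packed greedily into short singleton or paired intervals inside the complement.

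If no such $c^*$ exists, every element is tightly surrounded, in the sense that either $d_c<q n_c$ or the room around $j_c$ is too small to fit the small clusters of dispersed. In this regime I would construct an $(m,\ell)$-clustered decomposition via the \emph{canonical clustering}: merge $j_c$ and $j_{c+1}$ into the same cluster whenever $j_{c+1}-j_c<q$, and give each resulting cluster of spread $r_c$ an interval of length $q r_c$ placed so that the cluster's elements sit flush against one end, making the minimum in condition (ii) equal to $r_c$. By construction, inter-cluster gaps are at least $q$, which allows the intervals to be placed disjointly in $[1,m]$. Condition (iii), $q\sum r_c+2(\ell-\ell')\leq m$, will follow from the tight-surrounding hypothesis: since $d_c<q n_c$ throughout, the cluster spreads can be bounded in terms of in-cluster multiplicities, yielding $q\sum r_c\leq m/2$, while $2(\ell-\ell')\leq 2\ell\leq m/2$.

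The main obstacle is making the case split exhaustive, that is, showing that the failure of every dispersed attempt quantitatively implies feasibility of the clustered alternative. The core inequality relates the largest adjacent gap at each $j_c$ to the multiplicity $n_c$, and has to be converted into a global bound on $\sum r_c$ under the canonical clustering. Boundary effects (the edge gaps $j_1-1$ and $m-j_s$) behave a little differently from the interior ones and must be absorbed into the slack $2(\ell-\ell')$, which is where the hypothesis $m\geq 4\ell$ is used in its tight form to reconcile both budgets simultaneously. All remaining bookkeeping is elementary, relying only on $q\leq m/(2\ell)$, $\sum n_c=\ell'\leq\ell$, and disjoint placement of short intervals.
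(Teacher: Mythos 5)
Your two-case split is not a valid dichotomy, and the clustered fallback is predicated on a vacuous hypothesis. Let $g_c = j_{c+1}-j_c$ for $c=0,\ldots,s$, with your sentinels, so that $\sum_{c=0}^{s} g_c = m+1$. If $d_c<q n_c$ held for every $c\in[s]$, then $g_c\leq d_{c+1}<q n_{c+1}$ for $c\leq s-1$ and $g_s\leq d_s<q n_s$, which gives $m+1 < q\ell' + q n_s \leq 2q\ell\leq m$, a contradiction. Hence an anchor $c^*$ with $d_{c^*}\geq q n_{c^*}$ always exists, and the genuinely hard failure mode of the dispersed attempt is the one your fallback never treats: the anchor exists but the remaining elements cannot be packed into the complement (your second disjunct, ``room too small''). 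Independently, even if the hypothesis $d_c<q n_c$ did apply, the asserted bound $q\sum r_c\leq m/2$ does not follow: the canonical clustering merges consecutive $j_c$ whose gap is below $q$, so a cluster's spread is a sum of gaps each bounded by $q$ rather than by any multiplicity, yielding $q\sum r_c$ of order $q^2 s$, which can far exceed $m$ once $m\gg 4\ell$. So the ``main obstacle'' you flag is not bookkeeping; it is the proof.

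The paper closes this gap with an induction on $\ell'$ rather than a direct dichotomy. Writing $d=\lfloor m/(2\ell)\rfloor$, the inductive step distinguishes: if $\min(I)$ lies strictly to the right of $\lambda d$ (where $\lambda$ is the multiplicity of $\min(I)$) and not too close to $m$, a dispersed interval is placed just left of $\min(I)$ and the rest is packed via an auxiliary doubling-intervals lemma; symmetrically for $\max(I)$; if all of $I$ sits within $2\ell'$ of one boundary, a single large clustered interval works; and in the remaining case the paper locates a fixed point $\eta^*$ of $\eta\mapsto|\{c : i_c\leq 2d\eta\}|$, splits $[m]$ at $2d\eta^*$, invokes the induction hypothesis on each half, and glues. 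That fixed-point split is precisely the balancing device your sketch lacks: it trades the global ``anchor versus packing budget'' tension for a recursive equilibration. To salvage a non-inductive proof you would need a quantitative implication that whenever every anchor's packing budget is infeasible the clustered budget is feasible, and that implication is not a consequence of the local gap/multiplicity bounds you are working with.
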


Finally, we drop the assumption that~$P(\frac{m_i}{3},m_i,h_{\ell_i'})=(s,h_1)$ for some~$s$, which we made at the start. In general, the memory states visited by the improvement rule do not just form one repeating sequence; if~$\ell_i''$ is such that~$P(\frac{m_i}{3},m_i,h_{\ell_i'})=(s,h_{\ell_i''})$ and the number of improving moves stays $\frac{m_i}{3}$, then the choices output by $P$ are
\[
    (g_1,g_2,\ldots,g_{\ell_i''},g_{\ell_i''+1}, \ldots, g_{\ell_i'},g_{\ell_i''},g_{\ell_i''+1},\ldots,g_{\ell_i'},g_{\ell_i''},g_{\ell_i''+1},\ldots)
\]
In this case, we can modify the constructions from \cref{lem:clustered,lem:dispersed}, by simply adding some filler gadgets that account for the one-time switches related to~$g_1,g_2,\ldots, g_{\ell_i''-1}$.
\begin{restatable}{lemma}{noncycle}
     Suppose we have an~$\ell$-index-based improvement rule~$\Pi^P$, suppose~$m_i$ is divisible by 3,~$\ell_i:=\ell(m_i)$, and~$m_i\geq 12\ell_i$. There exists a parity game with~$m_i$ player 0 edges, which takes at least~$2^{\frac{m_i}{12\ell_i}-1}$ iterations to solve for SI with improvement rule~$\Pi^P$.
\end{restatable}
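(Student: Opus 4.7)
The plan is to reduce the general case to the cycle case already handled by Lemma~\ref{lem:clustered} and Lemma~\ref{lem:dispersed}, by absorbing the non-repeating prefix of the memory trajectory into a collection of filler gadgets.

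Concretely, I would start by formalizing the trajectory of the memory. Setting $h_1=1$ and $(g_j,h_{j+1})=P(\tfrac{m_i}{3},m_i,h_j)$ until the first repeated state, the definition of $\ell_i'$ yields $P(\tfrac{m_i}{3},m_i,h_{\ell_i'})=(s,h_{\ell_i''})$ for some $\ell_i''\le\ell_i'$. If $\ell_i''=1$, the earlier analysis applies directly. Otherwise, the sequence of outputs (assuming $\tfrac{m_i}{3}$ improving moves throughout) is a prefix $g_1,\dots,g_{\ell_i''-1}$ followed by the infinite repetition of the cycle $g_{\ell_i''},\dots,g_{\ell_i'}$. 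Let $t\coloneqq \ell_i'-\ell_i''+1$ be the cycle length, and apply Lemma~\ref{lem:clustereddispersed} to the finite sequence $(g_{\ell_i''},\dots,g_{\ell_i'})$, viewed inside $[\tfrac{m_i}{3}]$: this cycle is either $(\tfrac{m_i}{3},t)$-clustered or $(\tfrac{m_i}{3},t)$-dispersed.

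Next, I would build the game. Start from the construction of Lemma~\ref{lem:clustered} or Lemma~\ref{lem:dispersed}, but use a slightly smaller edge-budget $m_i'\coloneqq m_i-3(\ell_i''-1)$ instead of $m_i$ and a memory-budget $t$ instead of $\ell_i$, so that the internal counter still witnesses the lower bound $2^{m_i'/(12 t)-1}\ge 2^{m_i/(12\ell_i)-1}$ (the extra slack is guaranteed by $m_i\ge 12\ell_i$ and $\ell_i''-1<\ell_i$). Then I shift every Bland number in that construction by $\ell_i''-1$ positions, freeing up the $\ell_i''-1$ smallest global indices. Into these freed indices I insert $\ell_i''-1$ filler gadgets (Figure~\ref{fig:ControllerFiller}) whose improving edges realise exactly the one-shot choices $g_1,\dots,g_{\ell_i''-1}$: each filler contributes a unique improving switch that the rule will be forced to take in order, because when the memory is in state $h_j$ ($j<\ell_i''$) and there are $\tfrac{m_i}{3}$ improving moves, the rule selects the $g_j$-th improving index, which by our placement is precisely the open edge of the $j$-th filler. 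After each such pivot the filler is resolved, the index disappears from $\Gamma^+$, and the rule's memory advances to $h_{j+1}$; crucially, by design of the filler, the total number of improving moves is preserved at $\tfrac{m_i}{3}$ throughout this warm-up, which is required to trigger the intended outputs of $P$.

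After these $\ell_i''-1$ forced pivots are done, the memory enters the repeating cycle and the remaining game behaves exactly as in the clustered/dispersed construction, so the inherited lower bound of $2^{m_i/(12\ell_i)-1}$ iterations still applies, and one checks $m_i'+3(\ell_i''-1)=m_i$ so the edge count is exact.

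The main obstacle I anticipate is ensuring that the filler indices are indeed selected first in the intended order and that no improving switch inside the main construction is accidentally available with a lower global index during the prefix phase. Handling this requires choosing the filler gadgets so that (i) they present an improving edge precisely at index $g_j$ whenever the cumulative prefix pivots $g_1,\dots,g_{j-1}$ have already been executed, and (ii) the resolved fillers do not alter the preorder of improving indices inside the main construction. Both properties follow from the fact that filler gadgets are self-contained and their improving edges depend only on the local sub-strategy; after resolution they contribute no further improving moves, and the Bland numbering inside the main construction is invariant under the deletion of these already-resolved indices modulo a uniform shift that was built into the construction from the start.
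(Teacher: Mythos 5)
Your high-level plan — absorbing the non-repeating prefix of the memory trajectory into extra gadgets and then falling back to the clustered/dispersed analysis — is exactly the paper's strategy. However, the implementation you describe has a genuine gap, and it concerns precisely the point you flag as the ``main obstacle.''

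The central problem is that the ordinary filler gadget from Figure~\ref{fig:gadgetfiller} does \emph{not} preserve the number of improving moves once its one improving edge is switched: after the prio-$3$ node reroutes to the prio-$2$ node, the gadget reaches its local optimum and henceforth contributes zero improving moves. So if you use $\ell_i''-1$ plain fillers as one-shot switches, the count of improving moves drops from $\tfrac{m_i}{3}$ to $\tfrac{m_i}{3}-j$ after the $j$-th warm-up pivot, and from then on $P$ is queried at $(\tfrac{m_i}{3}-j,m_i,\cdot)$ rather than at $(\tfrac{m_i}{3},m_i,\cdot)$ — which is outside the trajectory $(g_j,h_j)$ you computed. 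Your parenthetical claim ``crucially, by design of the filler, the total number of improving moves is preserved'' is precisely what fails. The paper resolves this by introducing a different, purpose-built ``double filler gadget'' (Figure~\ref{fig:gadgetdoublepath}) that replaces \emph{two} ordinary fillers: it starts with exactly two improving moves, and after the intended one-shot edge $a_1$ is switched it again has exactly two improving moves (with $a_2$ stepping in), so the global count $\tfrac{m_i}{3}$ is unchanged throughout the warm-up.

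A second, smaller issue: you ``shift every Bland number by $\ell_i''-1$ positions'' and insert the fillers at the freed bottom indices, but the outputs $g_1,\dots,g_{\ell_i''-1}$ are arbitrary elements of $[\tfrac{m_i}{3}]$, not necessarily the smallest ranks. The $j$-th warm-up edge must sit at rank $g_j$ inside the \emph{current} set $\Gamma^+$, not at a fixed small Bland index; and because each switch replaces $a_1$ by an edge at a different global position, the ranks of the remaining warm-up edges shift after every pivot. The paper handles this by building the games $CG_1\subset CG_2\subset\cdots\subset CG_{\ell_i''}$ in reverse order of $j$, inserting $a_1^{(j)}$ at the exact rank $g_j$ relative to $CG_{\ell_i''-j}$ so that after switching $a_1^{(j)}$ the improving-edge positions are again those of $CG_{\ell_i''-j}$. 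A uniform shift cannot achieve this. Finally, the paper does not shrink the budget to $m_i'=m_i-3(\ell_i''-1)$ with memory $t$; it keeps $(m_i,\ell_i)$ and observes that condition (iii)/(v) of Definitions~\ref{def:clustered}/\ref{def:dispersed}, applied to the length-$\ell'=\ell_i'-\ell_i''+1$ cycle with $\ell=\ell_i$, already guarantees at least $2(\ell_i-\ell')\ge 2(\ell_i''-1)$ spare filler slots, which is cleaner and avoids the bound-comparison you would otherwise need to justify.
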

To conclude, by definition~$\frac{m_i}{\ell_i}\geq 3i\log(m_i)$, so $2^{\frac{m_i}{12\ell_i}-1}\geq 2^{\frac{i}{4}\log(m_i)-1}-1=\frac{1}{2}m_i^{\frac{i}{4}}-1$.
%
%it follows that
%\[
%    2^{\frac{m_i}{12\ell_i}-1}\geq 2^{\frac{i}{4}\log(m_i)-1}-1=\frac{1}{2}m_i^{\frac{i}{4}}-1 .
%\]
Since~$i\to\infty$, this yields that~$\Pi^P$ takes a superpolynomial number of steps in the worst case to solve sink parity games. Hence this completes the proof of \cref{thm:PGlordering}.

Finally, to complete the proof of \cref{thm:blandmemory}, it suffices to show that we can assume that all priorities in the game are unique by \cref{thm:PGreduction}. One can quickly check that all the constructions and the gadgets used are not significantly\footnote{There may be slight changes in the order of non-significant switches due to edge copying in the proof of \cref{lem:clustered}.} affected by the standard transformation. 
This completes the proof of \cref{thm:blandmemory}: we proved that no strongly polynomial index-based pivot rule exists when fewer than~$o(n/ \log n)$ memory states are available. The term `strongly' cannot be left out here, since the LP that is implicitly constructed via \cref{thm:PGreduction} has doubly exponential coefficients. It remains open whether this lower bound extends to rules with larger memory.
\begin{question}
    Is there a super-polynomial lower bound on the running time of index-based pivot rules when at least~$\Omega(n/\log n)$ memory states are available?
\end{question}

\section{Lower bounds for a memoryless subclass of ranking-based pivot rules}
\label{sec:rankings}
Three classical simplex pivot rules are Dantzig's original pivot rule, Bland's rule, and the largest-increase rule, which greedily select the improving variable with the largest reduced cost, the smallest global index, and the largest associated objective improvement, respectively; see Observation~\ref{obs:greedy}.
We use the names \textsc{Bland}, \textsc{Dantzig}, and \textsc{LargestIncrease} to refer both to the pivot rules and to the underlying neighbor rankings, see Definition~\ref{def:greedy}.
Conceptually, as we will see in the following, the inefficient worst-case behavior of these rules~\cite{avis_notes_1978,KleeMinty,jeroslow_simplex_1973} arises not merely from their greedy nature but already from the fact that the information they rely on is too limited.
More precisely, our main result in this section is that every~$\{ \textsc{Bland},\allowbreak \textsc{Dantzig},\allowbreak \textsc{LargestIncrease} \}$-ranking-based simplex pivot rule (see Definition~\ref{def:rankingbased}) is at least subexponential.
Equivalently, every deterministic, memoryless simplex pivot rule that bases its decision solely on the number of improving variables and their relative rankings by index, reduced cost, and objective improvement requires subexponential time in the worst-case.

\begin{theorem}\label{thm:rankingsSimplex}
    Let~$\Pi$ be a~$\{\textsc{Bland}, \textsc{Dantzig}, \textsc{LargestIncrease}\}$-ranking-based pivot rule.
    Then, there exists a family of linear programs with~$N$ non-zero entries in the constraint matrix such that the simplex method with pivot rule~$\Pi$ takes~$\Omega(2^{\sqrt N})$ iterations.
\end{theorem}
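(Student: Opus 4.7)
The plan is to construct a family of Markov decision processes $(\mathcal{M}_n)_{n \in \N}$ with $\Theta(n)$ states and $\Theta(n^2)$ actions/transitions such that \textsc{PolicyIteration} with any $\{\textsc{Bland}, \textsc{Dantzig}, \textsc{LargestIncrease}\}$-ranking-based rule requires $\Omega(2^n)$ iterations. Since the associated LP has $N = \Theta(n^2)$ non-zeros in the constraint matrix, this translates into the claimed $\Omega(2^{\sqrt N})$ bound for the simplex method via Theorem~\ref{thm:MDPreduction}, which preserves reduced costs and objective values (so the three neighbor rankings on $\LP_{\mathcal{M}_n}$ coincide, under a suitable indexing of variables consistent with the global Bland order, with the corresponding rankings on $\mathcal{M}_n$).

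Next, I would set up $\mathcal{M}_n$ as a binary counter in the spirit of Friedmann-style constructions for policy iteration: for each bit $i \in [n]$ there is a bit gadget of constant size with actions encoding the value $0$ or $1$ of bit $i$, interconnected so that the initial policy encodes the counter value $0$, the unique optimal policy encodes $2^n - 1$, and the policies encoding intermediate values $v$ are visited in strictly increasing order. Incrementing the counter from $v$ to $v+1$ is realized by a sequence of targeted improving switches that flip the lowest $0$-bit of $v$ to $1$ and reset the lower $1$-bits.

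The heart of the argument is to arrange rewards and transition probabilities so that, at every policy $\sigma_v$ on the counter's trajectory, the triple of total preorders $(\preceq^{\textsc{Bland}}_{\sigma_v}, \preceq^{\textsc{Dantzig}}_{\sigma_v}, \preceq^{\textsc{LargestIncrease}}_{\sigma_v})$ restricted to the improving actions has the same isomorphism type across all $v$, with the counter-increment action occupying a fixed position in each ranking. Since the global indexing is chosen in advance, the Bland order is controlled by design; the Dantzig and LargestIncrease orders must be tuned through the reward/probability parameters. Once such invariance is established, the symmetry condition in Definition~\ref{def:rankingbased} forces every $\{\textsc{Bland}, \textsc{Dantzig}, \textsc{LargestIncrease}\}$-ranking-based rule---irrespective of how its decision function $D$ aggregates the three rankings---to pick the counter-increment action at every step, yielding the $\Omega(2^n)$ bound.

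The main obstacle is clearly this invariance step: the Dantzig ranking uses $\rc_\sigma(a) = \rew(a) + \sum_{s'} P_{a,s'}\val_\sigma(s') - \val_\sigma(s)$, and the LargestIncrease ranking depends on the global sum $\sum_s \val_\sigma(s)$ after applying $a$, both of which shift substantially as $\sigma$ changes across $2^n$ counter states. To keep all three orderings combinatorially stable I would employ a parameter hierarchy (say, powers of a sufficiently large base) that separates the contributions of different bits and shields the local ordering on improving actions from the global drift of $\val_\sigma$, analogous to the weight hierarchies in the constructions of~\cite{Friedmann:2009,friedmann_subexponential_2011}. The need to control three rankings simultaneously, rather than one, plausibly forces the construction to consume $\Theta(n^2)$ rather than $\Theta(n)$ non-zeros and is the technical reason the bound is subexponential rather than exponential in $N$.
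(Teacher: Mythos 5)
Your high-level plan---pass through Markov decision processes, build a Friedmann-style binary counter, tune rewards/probabilities so that the \textsc{Bland}, \textsc{Dantzig}, and \textsc{LargestIncrease} orders on improving actions coincide along the trajectory, then transfer to LPs via Theorem~\ref{thm:MDPreduction}---matches the paper's overall strategy, and the hierarchical-weights intuition for stabilizing the \textsc{Dantzig} and \textsc{LargestIncrease} orders is also essentially right.

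However, there is a genuine gap at the step you call the ``heart'' of the argument. You claim that once the triple of preorders has a fixed isomorphism type at every visited policy, with the counter-increment action at a fixed position, the symmetry condition of Definition~\ref{def:rankingbased} forces \emph{every} $\{\textsc{Bland},\textsc{Dantzig},\textsc{LargestIncrease}\}$-ranking-based rule to pick that action. It does not. The symmetry condition only forces the rule to be \emph{consistent}: if there are $k$ improving actions and the three rankings agree, the rule selects the action at position $f_\Pi(k)$ for some function $f_\Pi$ determined by the rule's (arbitrary, adversarial) decision function. Nothing constrains $f_\Pi(k)$ to equal the position where you placed the counter-increment action; a rule that always picks, say, the median-ranked switch simply bypasses your construction. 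The construction therefore cannot be a single family; it must be tailored to the unknown $f_\Pi$ after the rule is fixed. This is exactly what the paper does: it characterizes the rule by $f_\Pi$, builds a base counter that defeats $f_\Pi\equiv 1$ (Lemma~\ref{lem:BDL_with_fequals1}) and one that defeats $f_\Pi\equiv k$ (Lemma~\ref{lem:BDL_with_fequalsm}), then covers general $f_\Pi$ by two separate reductions: if $f_\Pi(k)=o(\sqrt k)$ it copies each action $k$ times so that any position among the last $k$ hits a copy of the intended switch (Lemma~\ref{lem:Sqrt}); if $f_\Pi(k)\neq o(\sqrt k)$ it pads the instance with $k_i-f_\Pi(k_i)$ dummy improving actions above the real counter so that position $f_\Pi(k_i)$ lands exactly on the intended switch (Lemma~\ref{lem:notInSqrt}). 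Relatedly, your diagnosis that the quadratic blow-up in $N$ comes from controlling three rankings simultaneously is off: the base counters are linear-size, and the $\Theta(n^2)$ blow-up is the price of the copying/padding needed to absorb the unknown selection position $f_\Pi$. Finally, note the isomorphism-type invariance you postulate would also require the \emph{number} of improving switches to be constant across the trajectory, which the paper's base counters do not satisfy (and do not need to, since $f_\Pi$ is a function of that number).
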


Given the close connection between \textsc{SteepestEdge} and \textsc{ShadowVertex}~\cite{black2025exponential}, it is natural to ask whether our results can be extended to include these rules as well.

\begin{question}
    Does there exist a polynomial-time~$\{\textsc{Bland},\allowbreak \textsc{Dan\-tzig},\allowbreak \textsc{Lar\-gest}\allowbreak\textsc{In\-crease},\allowbreak \textsc{Steepest\-Edge},\allowbreak \textsc{Shadow\-Vertex}\}$-\allowbreak ranking-based pivot rule?
\end{question}

%\begin{question}
%    Does there exist a polynomial-time~$\{\textsc{Bland},\allowbreak \textsc{Dan\-tzig},\allowbreak \textsc{Lar\-gest}\allowbreak\textsc{In\-crease},\allowbreak \textsc{Steepest\-Edge},\allowbreak \textsc{Shadow\-Vertex}\}$-\allowbreak ranking-based pivot rule?
%\end{question}

The remainder of this section is devoted to proving that no polynomial~$\{ \textsc{Bland},\allowbreak \textsc{Dantzig},\allowbreak \textsc{LargestIncrease} \}$-ranking-based pivot rule exists for \textsc{PolicyIteration}.
Although ranking-based pivot rules were defined only for the simplex method, the definitions carry over directly via the %well-established 
correspondence between \textsc{PolicyIteration} and the simplex method, see Theorem~\ref{thm:MDPreduction}.
Explicitly, a neighbor ranking assigns to each policy of the Markov decision process a total preorder on the set of its improving switches. 
Consequently, the following lower bound for \textsc{Policy\-Iteration} implies Theorem~\ref{thm:rankingsSimplex} since the number of non-zero matrix entries in the induced linear program is linear in the number of transition probabilities in the process, see~\cite{puterman1994markov} or~\cite[Sec. 2.4]{hansen2012worst}. 
Both theorems together prove \cref{thm:rankingsSimplexInformal}.

\begin{theorem}\label{thm:rankingsPI}
    Let~$\Pi$ be a~$\{\textsc{Bland}, \textsc{Dantzig}, \textsc{LargestIncrease}\}$-ranking-based pivot rule.
    Then, there exists a family of weak unichain Markov decision processes with~$N$ non-zero transition probabilities such that \textsc{PolicyIteration} with~$\Pi$ takes~$\Omega(2^{\sqrt N})$ iterations.
\end{theorem}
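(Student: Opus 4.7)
The plan is to construct, for each $\{\textsc{Bland},\textsc{Dantzig},\textsc{LargestIncrease}\}$-ranking-based rule $\Pi$, a family of weak unichain MDPs $(\mathcal{M}_n)_{n\in\N}$ with $N=\Theta(n^2)$ non-zero transition probabilities such that \textsc{PolicyIteration} driven by $\Pi$ visits $\Omega(2^n)=\Omega(2^{\sqrt N})$ policies. Via Theorem~\ref{thm:MDPreduction} this directly yields Theorem~\ref{thm:rankingsSimplex}. The high-level idea follows the binary-counter paradigm underlying most previous lower bounds for pivot/improvement rules (e.g.~\cite{friedmann_subexponential_2011,disser_exponential_2023,friedmann2011randomfacet,maat2025strategyimprovementsimplexalgorithm}): $\mathcal{M}_n$ is assembled from $n$ constant-size \emph{bit gadgets}, glued together via a common sink, so that every weakly unichain policy encodes a value in $\{0,1\}^n$ and the only way to reach the optimum from a non-optimal policy is to increment the counter one bit at a time.

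To force a ranking-based rule to actually traverse the full counter, I would simultaneously control all three rankings on the improving actions of every canonical policy. The Bland ranking is static and can be fixed by choosing the global indexing so that the \emph{correct} increment action at each canonical policy occupies a prescribed position among improving actions. The Dantzig and LargestIncrease rankings depend on the current policy, so I would split the reward vector into two scales: a large \emph{counter scale} that dictates the counter dynamics and encodes the bit values in geometric progression, and a much smaller \emph{ranking scale} whose sole purpose is to tune reduced costs and one-step objective improvements so that the correct increment action has a fixed relative rank in both orderings across all canonical policies. A separation of $\Omega(n)$ orders of magnitude between the two scales keeps the counter dynamics intact and preserves non-degeneracy and the weak unichain property. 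With the three rank profiles of the increment action matching across policies, the symmetry clause of Definition~\ref{def:rankingbased}, applied to the rank-preserving bijection between the improving-action sets of any two canonical policies, forces $\Pi$ to pick corresponding actions at every step.

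A subtlety is that the symmetry clause alone forces $\Pi$ to act consistently on equivalent rank profiles, but does not dictate \emph{which} action it picks. To handle this, I would exploit the freedom granted by the theorem: since the MDP family may depend on $\Pi$, I can perform a \emph{swap-to-fit} step at each rank profile occurring along the intended trajectory. Because $\Pi$ is deterministic and the set of possible rank profiles is finite, $\Pi$'s behavior on each profile is a fixed function; the construction then labels the improving actions so that the action $\Pi$ selects on each profile is the one that implements the desired counter increment. Combined with the symmetry argument, this yields a canonical trajectory of length $\Omega(2^n)$ that $\Pi$ is compelled to follow.

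The main obstacle I expect is the simultaneous, policy-dependent control of the Dantzig and LargestIncrease rankings across $2^n$ policies while keeping the MDP weakly unichain and the induced LP non-degenerate. Reduced costs and objective improvements are global quantities: perturbing a single reward or transition probability can reshuffle rankings at many downstream policies at once. Stabilizing the rank profile of the increment action throughout the whole counter trajectory will require a detailed case-by-case computation of reduced costs and objective improvements at each canonical policy, in the spirit of the valuation bookkeeping of Friedmann-style constructions, but with three interlocking rank constraints instead of one. The hardest part is verifying that no unintended improving switch shares the rank profile of the increment action at any canonical policy, so that the symmetry argument unambiguously forces the intended choice; any residual ambiguity must be absorbed by the swap-to-fit step.
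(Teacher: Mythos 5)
Your high-level strategy — a binary counter MDP, simultaneous control of all three rankings, the symmetry clause of Definition~\ref{def:rankingbased}, and the observation that the construction may depend on $\Pi$ — matches the paper's approach in spirit. In particular, your ``swap-to-fit'' idea corresponds to what the paper formalizes via the function $f_\Pi\colon\N\to\N$: when all three rankings agree, the symmetry condition forces $\Pi$ to select a position depending only on the number of improving switches, so the rule's behavior along the intended trajectory is fully determined once $f_\Pi$ is known.

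However, there is a genuine gap that the proposal glosses over, and it is precisely where the $\Theta(n^2)$ blowup (and the case analysis in the paper) comes from. Along a binary-counter trajectory, the number $k$ of improving switches at the canonical policies is not constant, so $f_\Pi(k)$ — the rank your construction must place the increment action at — varies from policy to policy. The Bland ranking is a single static indexing, so ``labeling the improving actions so that the action $\Pi$ selects on each profile implements the desired increment'' cannot be done uniformly once $k$ changes, and the symmetry clause only bites when the entire improving set (including its size) admits a rank-preserving bijection. You acknowledge this as a verification burden, but it is in fact a structural obstruction: no single counter-plus-two-scales construction handles an arbitrary $f_\Pi$. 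The paper resolves this by proving that all three rankings agree at every visited policy (collapsing the rank profile to $k$ alone), then splitting on the asymptotics of $f_\Pi$: if $f_\Pi(k)=o(\sqrt{k})$ it copies each action $\Theta(n)$ times so that picking any of the bottom $\sqrt{k}$ ranks selects some copy of the intended switch (Lemma~\ref{lem:Sqrt}); if $f_\Pi(k_i)\geq c\sqrt{k_i}$ along a subsequence it first strengthens the $f_\Pi\equiv k$ construction (Lemma~\ref{lem:BDL_with_fequalsm}) and then pads each process with exactly $m_i-f_\Pi(m_i)$ controlled filler switches that are always improving and always ranked at the top (Lemma~\ref{lem:notInSqrt}), so that the $f_\Pi(m_i)$-th least-preferred switch is the real increment. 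Both constructions need their own gadgetry (controller gadgets with paired $u_a$/$\delta_a$ actions that keep the improving-set size fixed, geometric rescaling of reduced costs, and a subsequence argument), and the quadratic blowup is an output of this analysis rather than something one can simply posit up front. Your proposal asserts $N=\Theta(n^2)$ but never derives it, and it assumes a single unified construction where the paper demonstrably needs two.
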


We still assume, as motivated in Section~\ref{sec:pivotrules}, that~$\Pi$ does not return ties. Moreover, in our constructions in this section, the considered neighbor rankings always return total orders on the set of improving variables, that is, there are no ties in~$\preceq_B$.

Let~$\mathcal{R} = \{ R_1, \dots, R_k \}$ be a fixed set of neighbor rankings.
Then, by Definition~\ref{def:rankingbased}, a memoryless~$\mathcal{R}$-ranking-based rule~$\Pi$ bases its decisions only on the neighbor rankings in~$\mathcal{R}$. 
Consider the special case where the simplex method is at a basis~$B$ such that all neighbor rankings in~$\mathcal{R}$ induce the same total preorder~$\preceq_{B}$ on the set of improving neighbors~$\Gamma^+(B)$, that is, we have~$\preceq^{R_1}_{B} = \cdots = \preceq^{R_k}_{B} = \preceq_{B}$.
In this situation, the only information that~$\Pi$ has about improving switches is their relative positions in the common preorder~$\preceq_{B}$. At such bases, $\Pi$ effectively only sees a single ranking of the improving variables. Thus, since $\Pi$ has no access to the underlying data and due to our symmetry condition in \cref{def:rankingbased}, its decisions are of the form `whenever given 20 improving variables, choose the 12\textsuperscript{th}-lowest index in $\preceq_{B}$'. 

We can use a similar idea to $\ell$-improving functions from \cref{sec:memory}. In the case where all neighbor rankings agree, we can write~$\Pi$ in terms of a function~$f_\Pi \colon \N \to \N$ with~$f_\Pi(k)\in\{1,2,\ldots,k\}$, for all $k\in \N$, such that~$\Pi_{m,n}((A,\vec b,\vec c), B)=i$, where $i$ is the index of the~$f_{\Pi}(|\Gamma^{+}(B)|)$-th smallest improving variable in~$\Gamma^{+}(B)$ according to~$\preceq_B$. 
Note that~$f_\Pi$ is independent of~$m$ and~$n$, since~$\Pi$ does not depend on~$m,n$ by Definition~\ref{def:rankingbased}.

Now fix an arbitrary~$\{ \textsc{Bland},\allowbreak \textsc{Dantzig},\allowbreak \textsc{LargestIncrease} \}$-ranking-based pivot rule~$\Pi$.
We analyze the running time of \textsc{Policy\-Iteration} with pivot rule~$\Pi$ depending on the structure of the function~$f_\Pi$, which determines the choice of~$\Pi$ at policies where all three neighbor rankings agree.
More precisely, depending on the behavior of~$f_{\Pi}$, we construct four different families of Markov decision processes.
Then, we argue that, on each of these families, \textsc{Policy\-Iteration} with pivot rule~$\Pi$ visits a (sub-)exponential number of policies when started at a suitable initial policy.
The common key feature of the constructed families is that the three neighbor rankings agree at each of the visited policies, such that the behavior of~$\Pi$ is fully determined by~$f_\Pi$ during the run of the algorithm.

First, we consider the case~$f_\Pi\equiv1$.
That is, whenever the neighbor rankings \textsc{Bland}, \textsc{Dantzig}, and \textsc{Largest\-Increase} agree, the pivot rule~$\Pi$ chooses the least-preferred improving switch in the common ranking.
We prove an exponential lower bound for such~$\Pi$ in \cref{lem:BDL_with_fequals1}.
The core structure of the constructed family of Markov decision processes is based on a lower bound construction for parity games given in~\cite{MaatThesis}.
As in Section~\ref{sec:memory}, we divide the processes into levels, or \emph{bits}, and prove that the algorithm simulates a \emph{binary counter}.
Since~$f_\Pi$ selects the least-preferred switch, the main feature of our processes is that improving switches in low levels are less improving than those in higher levels.

Second, we prove a subexponential lower bound for the case~$f_{\Pi}(k)=o(\sqrt{k})$ in \cref{lem:Sqrt}.
The key observation for the proof is the following: if~$\mathcal{M}$ is one of our processes from the first case, where~$f_\Pi\equiv1$, and we \emph{copy} each of its~$n$ actions~$n$ times, then we obtain a new process~$\mathcal{M}'$ with~$n'=n^2$ actions such that switching the least-preferred action in~$\mathcal{M}$ corresponds to switching one of its~$n=\sqrt{n'}$ copies in~$\mathcal{M}'$.
Thus, if~$n$ is large enough, the condition~$f_{\Pi}(k)=o(\sqrt{k})$ yields that~$\Pi$ on the new process~$\mathcal{M}'$ mimics the behavior from the first case.
Since the number~$n'$ of actions in~$\mathcal{M}'$ is quadratic in~$n$, we obtain a subexponential lower bound of~$\Omega(2^n)=\Omega(2^{\sqrt{n'}})$.

Third, we consider the case~$f_{\Pi}(k)=k$, for all~$k\in\N$, and give an exponential lower bound in \cref{lem:BDL_with_fequalsm}.
Our proof strengthens the exponential lower bound of~\cite{disser_unified_2023} by ensuring that all three neighbor rankings agree at every visited policy, which is necessary for our~$f_\Pi$-based analysis.
The main approach for our construction is to take the processes from the first case, where~$f_\Pi\equiv1$, and replace each action by some \emph{gadget} that was introduced in~\cite{disser_unified_2023}.
Informally, this gadget allows us to scale the reduced costs and objective improvements of improving switches at the expense of introducing additional improving switches, whose influence must be controlled in the construction.
By choosing the parameters in these gadgets such that switches in lower levels have larger reduced costs and associated objective improvements than those in higher levels, we obtain the desired exponential bound.

Finally, we prove a subexponential lower bound for the case~$f_{\Pi}(k)\neq o(\sqrt{k})$ in \cref{lem:notInSqrt}, by extending the Markov decision processes that were used for the third case.
By assumption, there exists a monotone sequence~$(k_i)_{i\in\N}\in\N^{\N}$ such that~$f_{\Pi}(k_i)\geq c\sqrt{k_i}$, for some constant~$c>0$ and for all~$i\in\N$.
For each~$i\in\N$, we take one of the processes from the third case, and equip it with gadgets such that, for a suitable initial policy and during a sufficient number of iterations, the number of improving switches is exactly~$k_i$, and the~$f_\Pi(k_i)$-th least-preferred improving switch in the new process corresponds to the most-preferred switch in the original process.
This yields a subexponential bound since~$\Pi$ mimics the behavior from the third case on the new process, whose size is quadratic in the size of the original process.

Combining \cref{lem:Sqrt,lem:notInSqrt} yields Theorem~\ref{thm:rankingsPI}, 
which in turn implies Theorem~\ref{thm:rankingsSimplex}; 
these two theorems establish our second main result, Theorem~\ref{thm:rankingsSimplexInformal}.

%%
%% Bibliography
%%

%% Please use bibtex, 
%\newpage
\bibliography{references}

\newpage
\appendix

\section{Proofs}\label{sec:appdx:proofs}

\subsection{Proofs from  Section~\ref{sec:memory}}\label{app:memory}
\alternate*

\begin{figure}[h]
    \centering
    \resizebox{\linewidth}{!}{%
    \begin{tikzpicture}[node distance = 3cm,
    p0/.style={circle, draw=p0Blue!60, fill=p0Blue!40, thick, minimum size=15mm,inner sep=0pt, outer sep=0pt}, % player 0 nodes
    every path/.style={draw=p0Blue,line width=1.3pt,{>=Latex}}, % default: edges are blue
      p1/.style={rectangle, draw=p1Red!60, fill=p1Red!40, thick, minimum size=15mm,inner sep=0pt, outer sep=0pt}, % player 1 nodes
      phan/.style={draw=none, fill=none, inner sep=0pt, outer sep=0pt}] % phantom nodes

    % nodes
    \node[p0] (a1) at (0,0) {$\begin{array}{c}a_1 \\ 3 \end{array}$};
    \node[p1] (b1) at (0,4) {$\begin{array}{c}b_1 \\ 4 \end{array}$};
    \node[p0] (a2) at (4,0) {$\begin{array}{c}a_2 \\ 5 \end{array}$};
    \node[p1] (b2) at (4,4) {$\begin{array}{c}b_2 \\ 6 \end{array}$};
    \node[phan] (phan1) at (8,0) {}; % invisible
    \node[phan] (phan2) at (8,4) {}; % invisible
    \node[phan] (phan3) at (10,0) {}; % invisible
    \node[phan] (phan4) at (10,4) {}; % invisible    
    \node[p0] (a5) at (14,0) {$\begin{array}{c}a_n \\ 2n+1 \end{array}$};
    \node[p1] (b5) at (14,4) {$\begin{array}{c}b_n \\ 2n+2 \end{array}$};
    \node[p0] (a6) at (18,0) {$\top$};
    \node[p1] (b6) at (18,4) {$\begin{array}{c}b_{n+1} \\ 2n+4 \end{array}$};
    % edges at start
    \draw (a1) edge[->, line width=3] node [pos=.3,below] {$1$} (a2);
    \draw (a1) edge[->] node [pos=.2,right] {$2$} (b2);
    \draw (b1) edge[->,p1Red] (b2);
    \draw (b1) edge[->,p1Red] (a2);
    \draw (a2) edge[->, line width=3] node [pos=.3,below] {$3$} ($(a2)!0.8!(phan1)$);
    \draw (a2) edge[->] node [pos=.2,right] {$4$} ($(a2)!0.8!(phan2)$);
    \draw (b2) edge[->,p1Red] ($(b2)!0.8!(phan1)$);
    \draw (b2) edge[->,p1Red] ($(b2)!0.8!(phan2)$);

    \draw (phan1) edge[-, black, dotted] (phan3);
    \draw (phan2) edge[-, black, dotted] (phan4);

    %edges at end
    \draw ($(phan3)!0.2!(a5)$) edge[->, line width=3] (a5);
    \draw ($(phan3)!0.2!(b5)$) edge[->] (b5);
    \draw ($(phan4)!0.2!(a5)$) edge[->, p1Red] (a5);
    \draw ($(phan4)!0.2!(b5)$) edge[->, p1Red] (b5);
    \draw (a5) edge[->, line width=3] node [pos=.3,below] {$2n-1$} (a6);
    \draw (a5) edge[->] node [pos=.2,right] {$2n$} (b6);
    \draw (b5) edge[->,p1Red] (b6);
    \draw (b5) edge[->,p1Red] (a6);
    \draw (b6) edge[->,p1Red] (a6);
    \draw (a6) edge[loop right, line width=3] (a6);
    
\end{tikzpicture}}
    \renewcommand\thefigure{\ref{fig:BjorklundCounter}}
    \caption{Binary counter parity game~$G_n$, adapted from \cite{bjorklund_combinatorial_2007}. Circles represent player 0 nodes and the sink, and squares represent player 1 nodes. Priorities are shown at the bottom in the nodes. The initial strategy is shown in bold, and the Bland numbers are given on the edges.}
\end{figure}
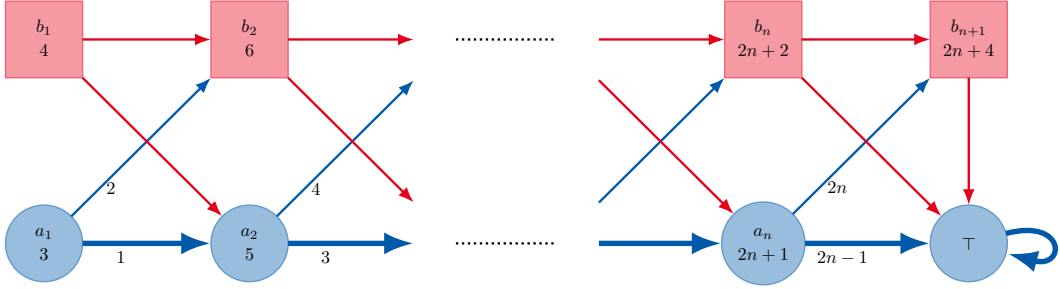
\addtocounter{figure}{-1}
\begin{proof}
    For every~$n\in\N$, we have the sink parity game~$G_n$, see Figure~\ref{fig:BjorklundCounter}, that consists of~$n$ levels such that
\begin{itemize}
    \item each level~$i\in[n]$ contains a player 0 node~$a_i$ with priority~$2i+1$ and a player 1 node~$b_i$ with priority~$2i+2$,
    \item there is a player 1 node~$b_{n+1}$ with priority~$2n+4$ whose only edge ends in the sink~$\top$,
    \item for every~$i\in[n]$, the node~$a_i$ has an outgoing edge with Bland number~$2i$ that leads to~$b_{i+1}$ and an outgoing edge with Bland number~$2i-1$ that leads to~$a_{i+1}$, where we write~$a_{n+1}=\top$,
    \item for every~$i\in[n]$, the node~$b_i$ has two outgoing edges leading to~$b_{i+1}$ and~$a_{i+1}$, respectively.
\end{itemize}
Define a player 0 strategy~$\sigma_0$ for~$G_n$ by~$\sigma_0(a_i)=a_{i+1}$ for every~$i\in[n-1]$ and~$\sigma_0(a_n)=\top$.
    We prove the following two statements by induction on~$n$:
    \begin{itemize}
        \item Strategy improvement with Bland's rule takes~$2^n-1$ iterations to arrive at the optimum.
        \item After an even number of switches,~$\val_{\sigma}(a_1)\tl \val_{\sigma}(b_1)$, and after an odd number of switches,~$\val_{\sigma}(a_1)\tr \val_{\sigma}(b_1)$.
    \end{itemize}
    %We prove this by induction. 
    For~$n=1$, strategy improvement takes one iteration to reach the optimum (by switching~$(a_n, b_{n+1})$, as then we collect an extra~$6$-priority). For both of the possible strategies, we have~$\bar{\sigma}(b_1)=\top$. For the initial strategy (after 0 switches), we have~$\val_{\sigma}(a_1)=\{3\}\tl \{4\} = \val_{\sigma}(b_1)$, and for the optimum (after 1 iteration) we have~$\val_{\sigma}(a_1)=\{3, 6\}\tr \{4\} = \val_{\sigma}(b_1)$. This completes the induction basis.
    
    Now suppose we have proven the lemma for~$G_{n-1}$, and now we want to prove it for~$G_{n}$. If we would remove~$a_1$ and~$b_1$ from~$G_n$ we are left with a parity game equivalent to~$G_{n-1}$. Moreover, the choices in the nodes~$a_1$ and~$b_1$ have no effect on the rest of the game, we conclude that the switches of strategy improvement on the remaining part of the graph are equivalent to a run of the algorithm on~$G_{n-1}$. From the induction hypothesis we know then that strategy improvement makes~$2^{n-1}-1$ iterations on that part of the game graph, and moreover the ordering of the valuations of~$a_2$ and~$b_2$ changes in each of these iterations. In particular, at the start,~$b_2$ is better valued than~$a_2$, so there is an improving move in~$a_1$. Like before, we see that currently~$b_1$ is better valued than~$a_1$, since~$\sigma(a_1)=\bar{\sigma}(b_1)=a_2$. Since Bland's rule prefers the edges from~$a_1$, it switches~$(a_1,b_2)$ first, after which~$a_1$ becomes better valued than~$b_1$. Then strategy improvement makes a switch in the rest of the game, which, as we know from the induction hypothesis, changes the highest valued of~$a_2$ and~$b_2$. This makes~$b_1$ higher valued than~$a_1$ again, and creates a new improving move from~$a_1$. After making that switch,~$a_1$ will have better value than~$b_1$. This pattern continues until there are no more improving moves from~$a_2, a_3, \ldots$, at which point~$a_1$ makes a final improving move to arrive at the optimum. From the induction hypothesis we know there were~$2^{n-1}-1$ improving moves from nodes other than~$a_1$, so there are~$2^{n-1}+(2^{n-1}-1)=2^n-1$ switches in total, and the order of the values of~$a_1$ and~$b_1$ alternates. This completes the induction proof.
\end{proof}

\clustered*

\begin{proof}
    Note that we keep using the notation~$\ell_i$ and~$m_i$. By definition of clusteredness, we get the intervals~$[p_1, p_1+q_1], [p_2, p_2+q_2], \ldots,[p_k, p_k+q_k]$ with the desired properties. Assume w.l.o.g. that the intervals are in order. 

    We first consider~$[p_1,p_1+q_1]$. By property (ii) of \cref{def:clustered}, all the elements of~$g_1,\ldots, g_{\ell_i'}$ are either contained in the first~$\lfloor\frac{q_1+1}{\lfloor \frac{m_i}{6\ell_i} \rfloor}\rfloor$ elements of the interval or in the last~$\lfloor\frac{q_1+1}{\lfloor \frac{m_i}{6\ell_i} \rfloor}\rfloor$. Suppose they are in the first elements, and assume for convenience~$p_1=1$. Then, take the binary counter game~$G_M$, with~$M=\ML$, from Figure \ref{fig:BjorklundCounter}. This game has~$2M$ player 0 controlled edges, and takes~$2^M-1$ iterations to solve for Bland's rule, according to \cref{lem:alternate}.
    We turn this game into~$G_{M}'$ by replacing each player 0 controlled edge by the multiplier gadget shown in Figure~\ref{fig:duplicate}.
    \begin{figure}
            \centering
            \resizebox{0.4\linewidth}{!}{%
            \begin{tikzpicture}[node distance = 3cm,
    p0/.style={circle, draw=p0Blue!60, fill=p0Blue!40, thick, minimum size=12mm}, % player 0 nodes
    gen/.style={circle, draw=black, fill=black!20, thick, minimum size=12mm}, % no color
    p0small/.style={circle, draw=p0Blue!60, fill=p0Blue!40, thick, minimum size=6mm}, % no color
    every path/.style={->,draw=p0Blue,line width=1.3pt,{>=Latex}}, % default: edges are blue
      p1/.style={rectangle, draw=p1Red!60, fill=p1Red!40, thick, minimum size=11mm}, % player 1 nodes
      phan/.style={draw=none, fill=none, inner sep=0pt, outer sep=0pt}] % phantom nodes

    % nodes
    \node[p0] (s1) at (0,0) {$s_1$};
    \node[gen] (s2) at (4,0) {$s_2$};
    %\node[phan] (Mtilde) at (-1,3) {$\tilde \mathcal{M}$};

    % edges 
    \draw (s1) -- (s2);

    \draw[black] (2,-1) -- (2,-2);

    \node[p0] (s) at (-1,-5) {$s_1$};
    \node[p1] (h1) at (2,-3) {0};
    \node[p1] (h2) at (2,-4.5) {0};
    \node[p1] (h3) at (2,-7) {0};
    \node[gen] (t) at (5,-5) {$s_2$};

    \node[phan] (phan1) at (2,-5.4) {};
    \node[phan] (phan2) at (2,-6.1) {};
    \draw[black] (phan1) edge[-, black, dotted] (phan2);
    
    \draw (s) edge[] (h1);
    \draw (s) edge[] (h2);
    \draw (s) edge[] (h3);
    \draw[] (h1) edge[p1Red] (t);
    \draw[] (h2) edge[p1Red] (t);
    \draw[] (h3) edge[p1Red] (t);

\end{tikzpicture}
        }
            \caption{The multiplier gadget, which turns every player 0 edge into multiple edges.}
            \label{fig:duplicate}
        \end{figure}
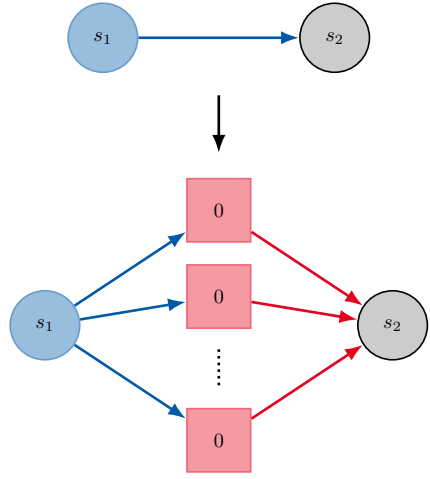
    To be precise, we do this in such a way that each player 0 edge is replaced by~$\lfloor\frac{q_1+1}{\lfloor \frac{m_i}{6\ell_i} \rfloor}\rfloor$ edges. The Bland ordering of~$G_{M}'$ is chosen according to the original game; that is, if edge~$e_1$ had a lower Bland number than~$e_2$ in the old graph, then the edges coming from~$e_1$ have lower Bland numbers than those edges coming from~$e_2$. The graph~$G_{M}'$ has 
    \begin{equation}
    2\cdot \ML\cdot\left\lfloor\frac{q_1+1}{\lfloor \frac{m_i}{6\ell_i} \rfloor}\right\rfloor \leq q_1+1\label{eq:q1}
    \end{equation}
    player 0 edges.

    As a result of the increase in edges, as long as there are exactly~$\frac{m_i}{3}$ improving moves in the game, whenever the index selector function picks an index from the interval $[p_1,p_1+q_1]$, the pivot rule will necessarily pick an edge with one of the~$\left\lfloor\frac{q_1+1}{\lfloor \frac{m_i}{6\ell_i} \rfloor}\right\rfloor$ lowest Bland numbers. But since that number is exactly the number of times we multiplied our edges, this is equivalent to just performing Bland's rule in the original graph. Hence, if we somehow can keep the number of improving moves constant, this means that our pivot rule needs~$2^M-1$ iterations to solve the game~$G_{M}'$.\footnote{Adding this gadget does give us a number of paths of identical length/value. However, the construction still works if we shift the priorities to be unique with the standard transformation. Say~$e_1$ is an edge from the original graph~$G_{M}$, and say we switch in~$G_{M}'$ to an edge originating from~$e_1$. It could then be that there is another edge related to~$e_1$ that is slightly better. However, this just adds a few improving moves with lower Bland numbers: they could be picked later, but it has no effect on the course of the strategy improvement algorithm.}

    This is done by another modification. We now replace every player 0 edge in~$G_{M}'$ by the controller gadget from \cref{fig:GadgetConstantPG}. 
    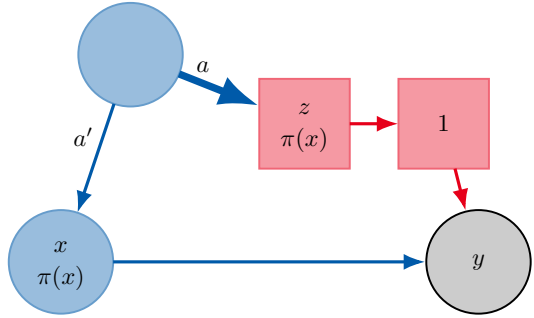
\begin{figure}[htbp]
        \centering
        \resizebox{0.5\linewidth}{!}{%
        \begin{tikzpicture}[node distance = 3cm,
    p0/.style={circle, draw=p0Blue!60, fill=p0Blue!40, thick, minimum size=15mm,inner sep=0pt, outer sep=0pt}, % player 0 nodes
    gen/.style={circle, draw=black, fill=black!20, thick, minimum size=15mm}, % no color
    p0small/.style={circle, draw=p0Blue!60, fill=p0Blue!40, thick, minimum size=6mm}, % no color
    every path/.style={draw=p0Blue,line width=1.3pt,{>=Latex}}, % default: edges are blue
      p1/.style={rectangle, draw=p1Red!60, fill=p1Red!40, thick, minimum size=13mm,inner sep=0pt, outer sep=0pt}, % player 1 nodes
      phan/.style={draw=none, fill=none, inner sep=0pt, outer sep=0pt}] % phantom nodes

    % nodes
    \node[p0] (x) at (0,0) {$\begin{array}{c}x\\ \pi(x)\end{array}$};
    \node[gen] (y) at (6,0) {$y$};
    \node[p0] (z) at (1,3) { };   
    \node[p1] (v) at (3.5,2) {$\begin{array}{c}z\\ \pi(x)\end{array}$};
    \node[p1] (w) at (5.5,2) {$1$};

    % edges bottom
    \draw (x) edge[->] (y);
    \draw (z) edge[->] node[pos=.3,left] {$a'$} (x);
    \draw (z) edge[->, line width=3] node[pos=.3,above] {$a$}(v);
    \draw (v) edge[->,p1Red] (w);
    \draw (w) edge[->,p1Red] (y);

\end{tikzpicture}
        }
        \caption{The controller gadget, which ensures a constant number of improving switches. Here,~$x,y$ and~$z$ are node labels, all other labels in the nodes denote priorities.}
        \label{fig:GadgetConstantPG}
    \end{figure}
        
    This gives us the graph~$G^1$, which has 
    \[
    6\cdot \ML\cdot\left\lfloor\frac{q_1+1}{\lfloor \frac{m_i}{6\ell_i} \rfloor}\right\rfloor \leq 3(q_1+1)
    \] player 0 edges. We claim that this will keep the number of improving moves of~$G^1$ constant as long as no edge~$a'$ is switched. This is shown in the following lemma.
    \begin{lemma}
        Suppose that in a parity game the smallest priority of the nodes reachable from player 0 node~$x$ is~$2$. Suppose we replace player 0 edge~$(x,y)$ by the gadget shown in \cref{fig:GadgetConstantPG}, and assume the edge~$a$ is used in strategy~$\sigma$.
        Then~$a'$ is improving if and only if~$a$ is not improving.
    \end{lemma}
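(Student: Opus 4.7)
The plan is to interpret the biconditional via the natural counterfactual reading: letting $u$ denote the auxiliary player~0 node in the upper-left of the gadget whose outgoing edges are $a = (u,z)$ and $a' = (u,x)$, the assumption ``$a$ is used in $\sigma$'' reads $\sigma(u) = z$, and ``$a$ is not improving'' is taken to mean that $a$ is not an improving switch in the neighbouring strategy $\sigma' := \sigma^{a'}$ obtained by flipping the choice at $u$ to $a'$. Under this reading the lemma reduces to the statement that exactly one of the two directions at $u$ is improving, and with $N := |V_0 \cup V_1|$ this is equivalent to $\val_\sigma(x) \neq \val_\sigma(z)$ in the $\tl$-order: one and only one of $\val_\sigma(x)\tr\val_\sigma(z)$ or $\val_\sigma(z)\tr\val_\sigma(x)$ then holds, corresponding respectively to $a'$ being improving in $\sigma$ or $a$ being improving in $\sigma'$. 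Note that the simple paths from $z$ and from $x$ to the sink do not traverse $u$, so the valuations $\val(x)$ and $\val(z)$ are unchanged when the choice at $u$ is flipped.

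Next, unfolding along the gadget: since $z$ and $w$ are player~1 nodes with unique outgoing edges (to $w$ and $y$), with $\pi(z) = \pi(x)$ and $\pi(w) = 1$, I would compute
\[
\val_\sigma(z) \;=\; \{\pi(x), 1\} \cup \val_\sigma(y) \qquad \text{and} \qquad \val_\sigma(x) \;=\; \{\pi(x)\} \cup \val_\sigma(\sigma(x)),
\]
where the unions denote multiset unions. Comparing their $\tl$-sums, the two $(-N)^{\pi(x)}$ contributions cancel and we are left with
\[
\sum_{s\in \val_\sigma(x)}(-N)^s \;-\; \sum_{s\in \val_\sigma(z)}(-N)^s \;=\; N \;+\; \sum_{s\in \val_\sigma(\sigma(x))}(-N)^s \;-\; \sum_{s\in \val_\sigma(y)}(-N)^s.
\]
The hypothesis that every priority reachable from $x$ is at least $2$ forces the nodes on the simple paths from $y$ and from $\sigma(x)$ (both reachable from $x$) to carry priorities $\ge 2$, so each term in the bracketed difference is of the form $\pm N^s$ with $s\ge 2$ and hence divisible by $N^2$. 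Therefore the whole sum-difference is congruent to $N \pmod{N^2}$, and in particular non-zero whenever $N\ge 2$, giving the desired strict inequality.

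The main obstacle is precisely this cancellation-avoidance argument: the role of the priority-$1$ node $w$ on the $a$-branch is to inject a contribution of order $-N$ into $\val_\sigma(z)$ that cannot be absorbed by any combination of $(-N)^s$ with $s \ge 2$. This is where the priority-reachability hypothesis is essential; without it, priority $1$ could also appear in $\val_\sigma(y)$ or $\val_\sigma(\sigma(x))$, the $-N$ contribution could be cancelled, and the biconditional could fail in the degenerate case $\val_\sigma(x) = \val_\sigma(z)$.
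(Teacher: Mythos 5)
Your interpretation of the biconditional is not the one the paper intends, and as a result your argument proves a different (and weaker) statement than the one the construction needs. The lemma statement is admittedly confusing — with your notation $a=(u,z)$, $a'=(u,x)$, and $\sigma(u)=z$, the edge $a$ can never be an improving \emph{switch} since it is already in use — but the paper's own proof and the way the lemma is invoked make the intended reading clear: ``$a$ is not improving'' should be understood as ``the original edge $(x,y)$ is not improving.'' Indeed, the surrounding text uses the lemma to show that the controller gadget \emph{keeps the number of improving moves constant}: at any strategy, exactly one of $(x,y)$ and $a'$ is improving. Under your counterfactual reading the lemma collapses to the statement $\val_\sigma(x)\neq\val_\sigma(z)$, which is just the assertion that $u$ has a strict preference between its two successors; that says nothing about $(x,y)$, so it cannot be what is used to control the improving-move count.

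The substance you need is already in your computation — you just stopped one step short. You correctly observe (writing $\uplus$ for multiset union) that $\val_\sigma(z)=\val_\sigma(y)\uplus\{\pi(x),1\}$ and that the priority-$2$ hypothesis forces
\[
\sum_{s\in\val_\sigma(x)}(-N)^s\;-\;\sum_{s\in\val_\sigma(y)\uplus\{\pi(x)\}}(-N)^s \;\equiv\; 0 \pmod{N^2}.
\]
The correct biconditional is then read off: $a'$ improving means $\val_\sigma(x)\tr\val_\sigma(z)$, i.e.\ the displayed difference exceeds $-N$, and a multiple of $N^2$ exceeding $-N$ must be $\ge 0$, which is precisely $\val_\sigma(x)\tre\val_\sigma(y)\uplus\{\pi(x)\}$, i.e.\ $(x,y)$ not improving. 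Conversely (and trivially, without the priority hypothesis), if $(x,y)$ is not improving then $\val_\sigma(x)\tre\val_\sigma(y)\uplus\{\pi(x)\}\tr\val_\sigma(y)\uplus\{\pi(x),1\}=\val_\sigma(z)$, so $a'$ is improving. This is exactly the route the paper takes; you had all the pieces but aimed them at the wrong target because of the misread.
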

    \begin{proof}
        Action~$a'$ is improving if and only if~$\val_{\sigma}(z)\triangleleft\val_{\sigma}(x)$.
    
        Assume~$a'$ is improving. This means that the valuation of~$x$ is better than that of~$z$, and since the smallest priority in the remaining reachable game is~$2$, this means that the valuation of~$x$ is at least as good as the valuation of~$z$ when the 1 is removed from its valuation. But that is the same as saying~$\val_{\sigma}(x)\trianglerighteq \val_{\sigma}(y)\uplus\pi(x)$, which is equivalent to saying~$(x,y)$ is not an improving edge.
    
        On the other hand, assume~$a$ is not improving. This means that~$\val_{\sigma}(x)\trianglelefteq \val_{\sigma}(z)=\val_{\sigma}(y)\uplus \{\pi(x),1\}\triangleleft \val_{\sigma}(y)\uplus \{\pi(x)\}$ which means that~$(x,y)$ is improving.
    \end{proof}
    Now we continue the proof of \cref{lem:clustered}. We pick our Bland numbering as follows: we keep the original edges~$(x,y)$ the same, and give the edges~$a$ and~$a'$ a higher Bland number than all of the original edges. In this way, our improvement rule~$P$ will still pick the edges from~$G_{M}'$, so no edge of the form~$a'$ will be switched until we have gone through the entire binary counter.

    Finally, it could be that (\ref{eq:q1}) did not hold with equality, so~$G_{M}'$ had less than~$q_1+1$ player 0 edges. In that case, we add a few copies of the gadget from \cref{fig:gadgetfiller} until the total number of player 0 edges equals~$3(q_1+1)$.
    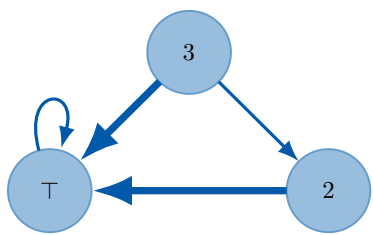
\begin{figure}[h]
        \centering
        \resizebox{0.35\linewidth}{!}{%
        \begin{tikzpicture}[node distance = 3cm,
    p0/.style={circle, draw=p0Blue!60, fill=p0Blue!40, thick, minimum size=12mm,inner sep=0pt, outer sep=0pt}, % player 0 nodes
    gen/.style={circle, draw=black, fill=black!20, thick, minimum size=12mm,inner sep=0pt, outer sep=0pt}, % no color
    p0small/.style={circle, draw=p0Blue!60, fill=p0Blue!40, thick, minimum size=6mm}, % no color
    every path/.style={draw=p0Blue,line width=1.3pt,{>=Latex}}, % default: edges are blue
      p1/.style={rectangle, draw=p1Red!60, fill=p1Red!40, thick, minimum size=11mm}, % player 1 nodes
      phan/.style={draw=none, fill=none, inner sep=0pt, outer sep=0pt}] % phantom nodes

    % nodes
    \node[p0] (s) at (0,0) {$\top$};
    
    \node[p0] (x) at (4,0) {$2$};
    \node[p0] (y) at (2,2) {$3$};

    % edges bottom
     \draw (s) edge[loop above] (s);
    \draw (y) edge[->, line width=3] (s);
    \draw (x) edge[->, line width=3] (s);
    \draw (y) edge[->] (x);

\end{tikzpicture}
        }
        \caption{The filler gadget. Here~$2$ and~$3$ are priorities.}
        \label{fig:gadgetfiller}
    \end{figure}
    We pick the Bland numbers of these filler gadgets such that they get the largest numbers in~$G^1$. Summarizing, the graph~$G^1$ has then~$3(q_1+1)$ nodes, has a constant number of~$q_1+1$ improving moves, and takes at least~$2^{M}-1$ iterations to be solved by improvement rule~$\Pi^{P}$ (ignoring what happens outside of~$G_1$ for now).

    Now we are nearing the end of the proof of \cref{lem:clustered}. Recall that we made two assumptions on the interval~$[p_1,p_1+q_1]$: that~$p_1=1$ and that the intersection of~$g_1,\ldots, g_{\ell_i'}$ with the interval~$[p_1,p_1+q_1]$ is at the start of the interval. For the latter assumption, note that we could simply reverse the Bland order on~$G^1$ to make the construction work for this case. And if~$p_1>1$, then we can add~$p_1-1$ filler gadgets like in \cref{fig:gadgetfiller}, each with lower Bland numbers than all of~$G^1$. These gadgets will not be switched before the binary counter is done, since~$[p_1,p_1+q_1]$ is the first interval, and since~${g_1\ldots,g_{\ell_i'}}$ is contained in the union of the intervals from \cref{def:clustered}.

    To complete our construction, we add binary counters~$G^2,G^3,\ldots, G^k$ to the game. Here, the counter~$G^j$ is based on the interval~$[p_j,p_j+q_j]$, constructed analogously to~$G^1$. For each of the integers from~$[1,\frac{m_i}{3}]$ that are not contained in such an interval, we add a filler gadget from \cref{fig:gadgetfiller}. Naturally, the filler gadget corresponding to number~$r$, with~$p_j+q_j<r<p_{j+1}$ will have Bland number higher than nodes from~$G^j$ and lower than nodes from ~$G^{j+1}$. The whole construction together forms the game~$G_P(m_i)$, which has~$m_i$ player 0 edges.

    Since what we have shown for~$G^1$ also holds for the other sections of~$G_P(m_i)$, we conclude that it takes~$\Pi^P$ at least~$2^M-1$ iterations to solve~$G_P(m_i)$, and that in the first~$2^M-1$ iterations, the number of improving moves is constant at~$\frac{m_i}{3}$.

    Finally, we have
    \[
        M=\ML\geq\frac{\left\lfloor\frac{m_i}{6\ell_i}\right\rfloor-1}{2}>\frac{\frac{m_i}{6\ell_i}-2}{2}=\frac{m_i}{12\ell_i}-1
    \]
    Hence the algorithm takes at least~$2^M-1\geq 2^{\frac{m_i}{12\ell_i}-1}$ iterations, which completes the proof of \cref{lem:clustered}.
    \end{proof}

    \dispersed*
    \begin{proof}
    We again use~$M:=\ML$. We start our construction with the game~$G_{M}$, defined as in \cref{fig:BjorklundCounter}. Our goal is to identify the edges of~$G_M$ with elements from the interval~$K:=[\psi,\psi+\xi]$, in a similar way we did for~$G^1$ in the proof for the clustered case. However, in this case we have much less restrictions on the output of the improvement rule outside the interval~$[\psi,\psi+\xi]$, so we need some mechanism to deal with outputs outside the interval~$K$.

    Consider the interval~$[\psi,\xi+\psi]$. The values~$g_i$ in this interval are either equal to the first element or all to the last. We assume w.l.o.g. they are equal to the first. We modify our game~$G_M$ in such a way that making a significant improving move takes many small steps. To do so, we introduce a new gadget, called the delayer gadget. It is shown in \cref{fig:GadgetConsecutive}.

    \begin{lemma}
    \label{lem:consecutive}
    In the gadget from \cref{fig:GadgetConsecutive}, suppose that in the strategy~$\sigma$ we have~$\val_{\sigma}(x)\tr\val_{\sigma}(y)$ and that moreover these two valuations differ by at least a priority~$3$. Assume furthermore that~$\sigma$ uses edges~$l_0,l_1,\ldots,l_k$. 
    Let~$\sigma^{(1)}$ be obtained from~$\sigma$ by some switch outside the gadget, such that~$\val_{\sigma^{(1)}}(x)\tl\val_{\sigma^{(1)}}(y)$ (again differing by at least a 3). Then the following are true:
    \begin{itemize}
        \item There are no improving moves inside the gadget for strategy~$\sigma$.
        \item Starting at~$\sigma^{(1)}$, the gadget shown in the figure needs exactly~$k+1$ improving switches to guarantee reaching~$y$. In every iteration, there is exactly one improving switch possible in the gadget.
    \end{itemize}
    By symmetry, this lemma is also true if the roles of~$x$ and~$y$ are reversed and~$\sigma$ uses~$r_0,r_1,\ldots,r_k$.
    %Let~$\sigma$ be the strategy in\cref{fig:GadgetConsecutive} that is optimal when~$x$ is much higher valued than~$y$ (differing by at least a priority~$3$). Suppose that~$y$ is actually valued much better than~$x$ (again, by at least a~$3$).
    %Then, the gadget shown in the figure needs exactly~$k+1$ iterations to become optimal. In every iteration, there is exactly one improving switch possible. These also hold hold when~$x$ and~$y$ are swapped in the above statements.
\end{lemma}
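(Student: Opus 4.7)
The plan is to track, iteration by iteration, player~1's optimal counter strategy $\bar\sigma$ together with the induced valuations of $z_1,\ldots,z_{k+1}$, and to confirm by direct multiset comparison that the claimed improving switch is the unique one. Throughout, denote the left player~1 node (adjacent to $x$) by $P_\ell$ and the right one (adjacent to $y$) by $P_r$. The standing priority-$3$ gap between $\val(x)$ and $\val(y)$ implies that any multiset comparison whose ``difference'' consists only of priorities $1$ and $2$ is determined by the (larger) gap; I would first state this as a small auxiliary lemma to avoid repeating arithmetic.

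\textbf{Part~1 (no improving moves at $\sigma$).} Under the $l$-strategy, admissibility forces $\bar\sigma(P_\ell)=u_\ell$: the alternative $d_\ell$ would close the cycle $P_\ell\to z_{k+1}\to\cdots\to z_1\to P_\ell$ of highest priority $2$, which is even and hence unfavourable for player~1. At $P_r$ the options are $z_1$ with value $\{1,2\}\uplus\val_\sigma(x)$ and $y$ with value $\val_\sigma(y)$; the priority-$3$ gap gives $\bar\sigma(P_r)=u_r$. Hence $\val_\sigma(z_i)=\{1\}^i\uplus\{2\}\uplus\val_\sigma(x)$. The unused edge at each $z_i$ with $i\le k$ leads to $z_{i+1}$ and inflates the valuation by two extra priority-$1$ entries, making it strictly smaller; the alternative $r_0$ at $z_{k+1}$ yields $\{1,2\}\uplus\val_\sigma(y)$, again smaller by the gap. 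So no edge in the gadget is improving.

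\textbf{Part~2 (exactly $k+1$ improving switches).} I would induct on $t\in\{0,\ldots,k\}$ to show that after $t$ switches---namely $r_0$ at $z_{k+1}$, $r_1$ at $z_k$, \ldots, $r_{t-1}$ at $z_{k+2-t}$ applied in this order---the unique improving switch in the gadget is $r_t$ at $z_{k+1-t}$. During this intermediate phase ($t\le k$), the surviving $l$-chain still routes $z_1,\ldots,z_{k+1-t}$ to $x$ through $P_\ell$ (so $\bar\sigma(P_\ell)=u_\ell$ is still forced by the even-cycle argument), while at $P_r$ player~1 now prefers $d_r$ to $z_1$, since $\val(z_1)=\{1,2\}\uplus\val(x)\tl\val(y)$. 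Hence $\val(P_r)=\{1,2,2\}\uplus\val(x)$ and $\val(z_i)=\{1\}^{k+3-i}\uplus\{2,2\}\uplus\val(x)$ for $i\ge k+2-t$. A short computation using $\sum_s(-N)^s$ with $N\ge|V_G|$ then verifies that $r_t$ at $z_{k+1-t}$ is improving (the difference has a positive $N^2$ leading term) while every other edge fails by the usual ``two extra priority-$1$ entries'' argument.

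\textbf{Final switch and player~1's re-optimization.} The $(k{+}1)$-th switch, $r_k$ at $z_1$, closes the $r$-chain back into $P_r$: choosing $d_r$ would now yield the cycle $z_1\to\cdots\to z_{k+1}\to P_r\to z_1$ of highest priority $2$, so $\bar\sigma(P_r)$ flips from $d_r$ to $u_r$. Consequently $\val(P_r)=\{2\}\uplus\val(y)$ and $\val(z_i)=\{1\}^{k+2-i}\uplus\{2\}\uplus\val(y)$, so every $z_i$ now routes to $y$. The symmetric statement follows by relabeling $l\leftrightarrow r$, $P_\ell\leftrightarrow P_r$, $x\leftrightarrow y$. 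The main technical obstacle is precisely this flip of $\bar\sigma(P_r)$ at the final iteration, which must be delayed until all internal $l$-edges have been replaced; once the auxiliary gap lemma is set up, all intermediate multiset comparisons reduce to comparing short multisets over $\{1,2\}$, which is routine.
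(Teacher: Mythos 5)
Your proposal is correct and follows essentially the same route as the paper's proof: both fix player~1's counter-strategy at $P_\ell$ via the even-cycle argument, determine the choice at $P_r$ via the priority-$3$ gap, and then induct on $t$ to show that $r_t$ at $z_{k+1-t}$ is the unique improving edge inside the gadget, with $\bar\sigma(P_r)$ flipping only once the $r$-chain closes after the $(k{+}1)$-th switch. Your version makes the paper's terse multiset and cycle comparisons fully explicit (and factors out the gap argument into an auxiliary lemma), but the underlying argument is identical.
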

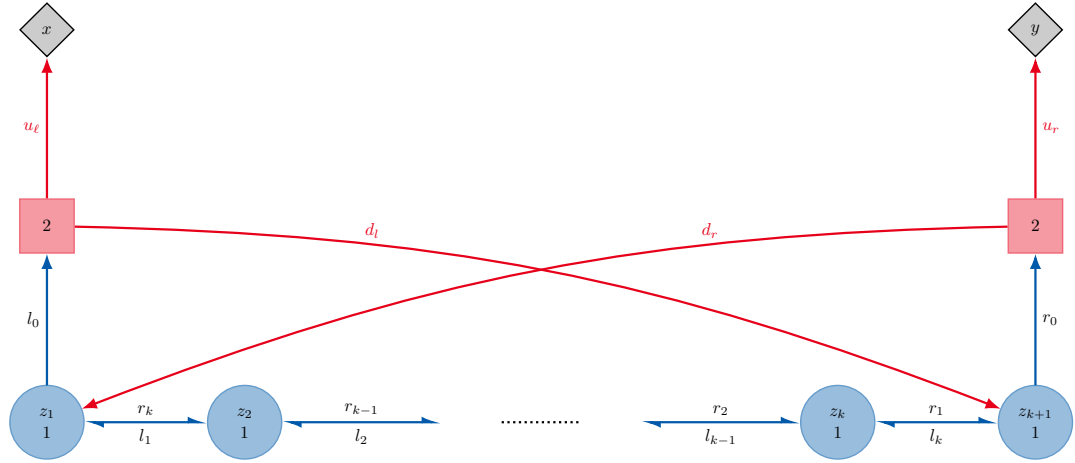
\begin{figure}[h]
    \centering
    \resizebox{\linewidth}{!}{%
    \begin{tikzpicture}[node distance = 3cm,
    p0/.style={circle, draw=p0Blue!60, fill=p0Blue!40, thick, minimum size=15mm,inner sep=0pt, outer sep=0pt}, % player 0 nodes
    every path/.style={draw=p0Blue,line width=1.3pt,{>=Latex}}, % default: edges are blue
      p1/.style={rectangle, draw=p1Red!60, fill=p1Red!40, thick, minimum size=11mm}, % player 1 nodes
      undef/.style={diamond, draw=black, fill=black!20, thick, minimum size=11mm},
      %undefined nodes
      phan/.style={draw=none, fill=none, inner sep=0pt, outer sep=0pt}] % phantom nodes

    % nodes
    \node[p0] (a1) at (0,0) {$\begin{array}{c}
         z_1  \\
         1 
    \end{array}$};
    \node[p0] (a2) at (4,0) {$\begin{array}{c}
         z_2  \\
         1 
    \end{array}$};
    \node[phan] (phan1) at (8,0) {}; % invisible
    \node[phan] (phan2) at (12,0) {}; % invisible
    \node[p0] (a3) at (16,0) {$\begin{array}{c}
         z_k  \\
         1 
    \end{array}$};
    \node[p0] (a4) at (20,0) {$\begin{array}{c}
         z_{k+1}  \\
         1 
    \end{array}$};
    \node[p1] (b1) at (0,4) {$2$};
    \node[p1] (b2) at (20,4) {$2$};
    \node[undef] (phan3) at (0,8) {$x$}; % invisible
    \node[undef] (phan4) at (20,8) {$y$}; % invisible    

    % edges bottom
    \draw (a1) edge[harp] node [above] {$r_k$} node [below] {$l_1$} (a2);
    \draw (a2) edge[harp] node [above] {$r_{k-1}$} node [below] {$l_2$} (phan1);
    \draw ($(phan1)!0.3!(phan2)$) edge[-,black,dotted] ($(phan1)!0.7!(phan2)$);
    \draw (phan2) edge[harp] node [above] {$r_2$} node [below] {$l_{k-1}$} (a3);
    \draw (a3) edge[harp] node [above] {$r_1$} node [below] {$l_k$} (a4);

    % 1 to 2 edges
    \draw (a1) edge[->] node [left] {$l_0$} (b1);
    \draw (a4) edge[->] node [right] {$r_0$} (b2);

    % player 2 edges
    \draw (b1) edge[->,bend left=10pt,p1Red] node [pos=.3,above] {$d_l$} (a4); % down
    \draw (b2) edge[->,bend right=10pt,p1Red] node [pos=.3,above] {$d_r$} (a1);    
    \draw (b1) edge[->,p1Red] node [left] {$u_\ell$} (phan3); % up
    \draw (b2) edge[->,p1Red] node [right] {$u_r$} (phan4);

\end{tikzpicture}}
    \renewcommand\thefigure{\ref{fig:GadgetConsecutive}}
    \caption{The delayer gadget. It makes a significant switch take~$k+1$ times more iterations.}
\end{figure}
\addtocounter{figure}{-1}
\begin{proof}
    Suppose~$x$ is higher valued than~$y$. Then there is only one strategy in this gadget for Even to force the play to go to~$x$, which is the strategy choosing~$l_0, l_1, \ldots, l_k$. For any other strategy, Odd can use edges~$d_{l}$ and~$u_r$, after which Even either creates a cycle with only priorities 1 or plays edge~$r_0$ to end up at~$y$. However, by playing~$l_0, l_1, \ldots, l_k$, Odd cannot use edge~$d_{l}$, as it would create a cycle with 2 as highest priority, so Odd has to play~$u_{l}$, ending up at~$x$.

    Now suppose suddenly~$y$ becomes better than~$x$. This means that Odd will now play~$d_r$ instead of~$u_r$, in order to end up at~$x$. This means that edge~$r_0$ becomes improving, as its head has an extra~$2$ in its valuation compared to the current choice. Any other switch would create a cycle with only~$1$-priorities, so it cannot be improving. After the first switch, we can argue analogously that~$r_1$ is the only improving switch, and after that~$r_2$, etc, until~$r_k$ has been switched. Now Odd is unable to play~$d_r$ and has to play~$u_r$ towards~$y$, and there is no more improvement possible within the gadget. The case where then~$x$ becomes better than~$y$ again is the same, so this completes the proof.
\end{proof}

Now, going back to our graph~$G_M$, every node~$a_i$ has two outgoing edges. We then consider the quantity~$|\{c:g_c\in K\}|$. If it is equal to 1, we let~$G_M'=G_M$. Otherwise, we place around every~$a_i$ a gadget as in \cref{fig:GadgetConsecutive}, picking~$k=|\{c:g_c\in K\}|-1$ (note: this is a different~$k$ than in the definition of dispersedness). We do this replacement from high to low (first~$a_n$, then~$a_{n-1}$, and so on). When placing the gadget around~$a_i$, we pick~$b_{i+1}$ as the node~$x$ (so edge~$u_l$ will go to~$b_{i+1}$, and node~$a_{i+1}$ as our node~$y$ (or, if~$i=n$,~$\top$ will be~$y$). Finally, we turn~$a_i$ into a player 1 node, and add only an edge from~$a_i$ to~$z_1$ in the new gadget. The Bland numbers in the graph~$G_M'$ are chosen in increasing order along the gadgets: that is, any edge in the gadget of~$a_i$ has lower Bland number than any edge in the gadget of~$a_{i+1}$ for all~$i$.
For the initial strategy, we take the one equivalent to the original initial strategy from~$G_M$: that is the strategy using the edges~$r_0,r_1,\ldots,r_k$ in each gadget.

In both cases, the graph~$G_M'$ will have~$|\{c:g_c\in K\}|\cdot 2M$ player 0 edges. Moreover, it follows from \cref{lem:consecutive} that, if we perform strategy improvement on~$G_M$ with Bland's rule, the run of the algorithm will be equivalent to the run of Bland's rule on~$G_M$; the only difference is that it takes~$|\{c:g_c\in K\}|$ times more steps to reach the optimum. Moreover, in~$G_M$, which of the nodes~$a_1$ and~$b_1$ is the highest, switches every iteration: but in~$G_M'$, this switches every~$|\{c:g_c\in K\}|$ iterations.

Finally, like in the clustered case, we replace every player 0 edge with the gadget from \cref{fig:GadgetConstantPG} to make sure that the number of improving moves stays constant, as long as the edges of the form~$a'$ are not switched. We give the edges of the form~$a'$ therefore a higher Bland number than the original edges. Call the new game~$G_M''$, which will have~$|\{c:g_c\in K\}|\cdot 6M$ player 0 edges. The number of improving moves will be constant at~$2M\cdot |\{c:g_c\in K\}|$, and
\[
    \xi\geq \lfloor\frac{m_i}{6\ell_i}\rfloor|\{c:g_c\in K\}|-1\geq 2M\cdot |\{c:g_c\in K\}|-1
\]
so there is enough space in the interval~$[\psi,\psi+\xi]$. Additionally, we add some filler gadgets from \cref{fig:gadgetfiller} to make sure the game~$G_M''$ has exactly~$3(\psi+1)$ player 0 edges and~$\psi+1$ improving moves. Of course, the case where~$K=\{\psi+\xi\}$ is analogous: we just reverse the Bland ordering of the game~$G_M''$.

Next, we need to deal with the elements of the sequence~$g_1,g_2,\ldots, g_{\ell_i(p)'}$ that are outside the interval~$[\psi,\psi+\xi]$. We need to make sure that, whenever the improvement rule outputs these elements, this does not interfere with the game~$G_M''$. By property (ii) of \cref{def:dispersed}, they are contained in intervals~$[p_j,p_j+q_j]$, where each interval has double the length of the number of sequence elements~$g_j$ in it. Our goal is to a number of decoy switches for each interval.

For each of the intervals~$[p_j,p_j+q_j]$, we add a gadget to the game~$G_M''$. This depends on the quantity~$Q_j:=|\{c:g_c\in [p_j,p_j+q_j]\}|$. If~$Q_j=1$, we add just one player 0 node~$v_j$, with two outgoing edges:~$(v_j,a_1)$ and~$(v_j,b_1)$. If~$Q_j>1$, then we add the gadget from \cref{fig:GadgetConsecutive}, picking~$k=Q_j-1$, identifying~$x$ with~$b_1$ and~$y$ with~$a_1$. Hence in both cases, we have~$2Q_j$ player 0 edges. Next, we replace every player 0 edge from this new gadget with the gadget from \cref{fig:GadgetConstantPG}, resulting in the gadget~$G^j$, with~$6Q_j$ player 0 edges. Moreover, by \cref{lem:consecutive}, there are always~$2Q_j$ improving moves in~$G^j$, as long as no edge of the form~$a'$ is switched. It is, of course, no coincidence that, by \cref{def:dispersed}, we have 
\[
q_j\geq 2Q_j-1
\]
so there is enough space in the interval~$[p_j,p_j+q_j]$. Next, we add some filler gadgets from \cref{fig:gadgetfiller} to~$G^j$ to make sure that~$G^j$ has exactly~$3(q_j+1)$ player 0 edges.

Finally, we need to carefully pick the Bland numbers and the initial strategy in~$G^j$, to make sure that there always is an available switch for strategy improvement. To gain some intuition for this, we reorder the sequence of outputs of~$P$ as
\begin{equation}\label{eq:reorder}
    g_{\phi+1},\ldots, g_{\ell_i(p)'}, g_1,g_2,\ldots, g_\phi
\end{equation}
where~$\phi=\max\{c\in [\ell_i(p)']:g_c\in [\psi,\psi+\xi]\}$.
If the outputs of~$P$ are cycling through this sequence, after~$g_{\phi}$ is output, we switch so that the best valued between~$a_1$ and~$b_1$ changes. This then allows for new switches within~$G^j$, hence we want all the possible switches within~$G^j$ to be made between these occurrences of~$g_{\phi}$.

To be more precise, let~$h_1,h_2,\ldots,h_{Q_j}$ be the subsequence of \cref{eq:reorder} containing exactly the elements of~$[p_j,p_j+q_j]$. We want the first switch ($r_0$ or~$l_0$) to be in the~$h_1$-th position in the Bland ordering when it is improving, the second switch ($r_1$ or~$l_1$) in the~$h_2$-th position, and so on. This fixes our choice of initial strategy in~$G^j$: we pick edges~$r_0,r_1,\ldots, r_{|\{c<\phi: g_c\in [p_j,p_j+q_j]\}|-1}$, and pick the edges~$l_0,l_1,\ldots, l_{k-|\{c<\phi: g_c\in [p_j,p_j+q_j]\}|}$. Since initially~$b_1$ is higher valued, this means that there are~$|\{c<\phi: g_c\in [p_j,p_j+q_j]\}|$ switches to be made within~$G^j$ before we arrive at~$g_{\phi}$.

The required Bland ordering for~$G^j$ can be found in the following way: first, we assign Bland numbers to the edges of the form~$a$ and~$a'$ and of the filler gadgets. This can be done arbitrarily, as long as they are chosen such that they will lie in the interval~$[p_1,p_1+q_1]$ at the end. Next, we know that there will only be one of the edges~$r_0,r_1,\ldots, r_k,l_0,l_1,\ldots,l_k$ improving at a time (\cref{lem:consecutive}), and that when it is improving, then its related two edges~$a$ and~$a'$ are not improving. This means we can pick the Bland numbers of~$r_0,\ldots, r_k,l_0,\ldots,l_k$ independently without them interfering with each other. We determined that edges~$r_s$ and~$l_s$ must be the~$h_{s+1}$-th in the Bland ordering, and we can achieve this by letting its Bland number be in the~$h_{s+1}-p_1+1$-th position relative to the other moves that would be improving at the same time.

Finally, to complete the construction, we add a few more filler gadgets from \cref{fig:gadgetfiller} for all the numbers from~$[\frac{m_i}{3}]$ that were not in an interval~$[p_j,q_j]$ or~$[\psi,\psi+\xi]$, with the appropriate Bland number.

Summarizing, every time that~$g_{\phi}$ is output, the best valued of~$a_1$ and~$b_1$ switches, creating~$Q_j$ new improving moves within~$G^j$. All of these improving moves are then switched in between, whenever~$h_1,h_2,\ldots,h_{Q_j}$ is output, which `resets' the gadget again. In the end, this allows our binary counter~$G_M''~$ to operate normally, and we have already seen in the proof of \cref{lem:clustered} that~$\Pi^P$ therefore takes a superpolynomial number of iterations to solve the resulting game.
\end{proof}

\clustereddispersed*

Before we continue, we first need a small intermediate result.
\begin{lemma}\label{lem:doubleintervals}
    Let~$m,\ell\in\mathds{N}$ such that~$m\geq 2\ell$. Consider any sequence~$i_1,i_2,\ldots,i_{\ell}\in [m]$. Then there exists a~$k\in\mathds{N}$ and integers~$p_1,p_2,\ldots,p_k\in [m]$ and~$q_1,q_2,\ldots,q_k\in [m]$ such that:
    \begin{itemize}
        \item The intervals~$[p_1,p_1+q_1],\ldots,[p_k,p_k+q_k]$ are disjoint and contain~$I:=\{i_c:c\in [\ell]\}$
        \item~$|\{c:i_c\in [p_c,p_c+q_c]\}|=2q_c-1$ for~$c=1,2,\ldots,k$
    \end{itemize}
\end{lemma}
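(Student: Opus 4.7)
My plan is to prove this lemma by a left-to-right greedy construction on the sorted multiset of elements of $I$. Sort the sequence as $s_1 \leq s_2 \leq \cdots \leq s_\ell$, partition the index set $[\ell]$ into consecutive blocks $B_1, \ldots, B_k$, and for each block $B_c$ of size $N_c$ set $q_c = (N_c+1)/2$ and place the interval $[p_c, p_c+q_c]$ so that it contains the values $\{s_i : i \in B_c\}$. The equality $N_c = 2q_c - 1$ then holds by construction, and disjointness is obtained by placing each $[p_c, p_c+q_c]$ immediately to the right of $[p_{c-1}, p_{c-1}+q_{c-1}]$ while still covering its block.

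The greedy rule for the current block starting at $\alpha_c$ is to extend from index $\beta$ to $\beta+1$ as long as $s_{\beta+1} - s_{\alpha_c} \leq (\beta - \alpha_c + 2)/2$, which is exactly the density condition guaranteeing that $\{s_{\alpha_c}, \ldots, s_{\beta+1}\}$ fits inside an interval of length $q_c + 1 = (N_c + 3)/2$. When an extension would violate this, I close the block and open the next one at $\beta + 1$. The hypothesis $m \geq 2\ell$ accommodates the total interval length $\sum_c (q_c + 1) = (\ell + 3k)/2 \leq 2\ell \leq m$, where $k \leq \ell$ holds because every block is nonempty; this is what gives me room to place all intervals pairwise disjointly inside $[1,m]$.

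The main technical obstacle is enforcing the \emph{exact} equality $N_c = 2q_c - 1$: since $q_c$ must be an integer, each $N_c$ must be odd, which the naive greedy rule does not guarantee. I would address this by recasting the argument as an induction on $\ell$, peeling off a first block of odd size at each step. Specifically, I would take $N_1$ to be the smallest odd integer for which $\{s_1, \ldots, s_{N_1}\}$ fits inside $[s_1, s_1 + (N_1 + 1)/2]$, assign that interval to the first block, and apply the inductive hypothesis to the remaining $\ell - N_1$ elements in the remaining portion of $[m]$, which still has length at least $2(\ell - N_1)$ by $m \geq 2\ell$ and the bound $q_1 + 1 \leq N_1$ (for $N_1 \geq 3$; the case $N_1 = 1$ is handled directly as a singleton interval of length two). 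The base case $\ell = 0$ with $k = 0$ is trivial, and the key subclaim to verify is that such an odd $N_1$ always exists, which follows because singletons ($N_1 = 1$) always satisfy the density condition when $s_1 < m$.
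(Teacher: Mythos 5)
The central arithmetic you work with does not match what the lemma needs, and tracing its consequences shows the argument cannot be completed. Read literally, the lemma asks for $|\{c' : i_{c'} \in [p_c,p_c+q_c]\}| = 2q_c - 1$, and accordingly you set $q_c = (N_c+1)/2$. But then each interval has length $q_c+1 = (N_c+3)/2$, which is strictly less than $N_c$ once $N_c \geq 4$, so it cannot even hold $N_c$ distinct values; and in fact the statement as literally written is already false for $\ell=2$, $m=4$, $i_1=1$, $i_2=2$: any $[p,p+q]$ with $p,q\geq 1$ that contains the value $1$ must have $p=1$ and therefore contains the value $2$ as well, so its count is $2$, which is never of the form $2q-1$. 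What the paper actually uses (see its invocation inside the proof of Lemma~\ref{lem:clustereddispersed}, where the condition is written as $q_c+1 = 2|\{c':i_{c'}\in[p_c,p_c+q_c]\}|$) is the reversed relation: each interval has length exactly twice the number of sequence elements it contains. Under that relation there is always room, there is no parity constraint, and the total length $\sum_c (q_c+1)=2\ell\leq m$. The whole odd-block-size machinery you build is therefore aimed at a spurious constraint introduced by a transposition in the lemma statement.

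There is also a structural gap independent of the arithmetic: you identify your block size $N_c$ with $|\{c' : i_{c'}\in[p_c,p_c+q_c]\}|$, but your placement rule does nothing to prevent a sorted value from an adjacent block from landing inside $[p_c,p_c+q_c]$. Placing interval $c$ ``immediately to the right'' of interval $c-1$ does not fix this, because interval $c$ then cannot start at or below the smallest value of block $c$, which may be adjacent to, or equal to, the largest value in block $c-1$. The paper's own proof sidesteps both problems by inducting on $\ell$ instead of sorting: it inserts one sequence element at a time, opens a fresh interval of length two for an isolated value, and otherwise extends an existing interval by two (merging neighbouring intervals when they meet), so the length-to-count ratio and the covering property are preserved at every step. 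If you insist on a sorted-greedy construction you would need $q_c = 2N_c-1$, a cutoff rule guaranteeing the next sorted value lies strictly beyond $p_c+q_c$, and a separate packing argument that $\sum_c (q_c+1)=2\ell\leq m$ still lets the intervals sit disjointly in $[1,m]$.
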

\begin{proof}
    Fix~$m$, then we perform induction on~$\ell$. For~$\ell=1$ this is obvious. Now suppose we have shown the statement for~$\ell-1$ ($1<\ell\leq \frac{m}{2}$), then we want to show it for~$\ell$. By induction hypothesis, we get intervals~$[p_1,p_1+q_1],\ldots,[p_k,p_k+q_k]$ that satisfy the conditions for the sequence~$i_1,\ldots, i_{\ell-1}$. Assume w.l.o.g. that~$p_1<p_2<\ldots<p_k$. Now we want to add~$i_{\ell}$ to the sequence. There are two options:
    \begin{itemize}
        \item~$i_{\ell}$ is not in an interval, and so is an adjacent element of~$[m]$. In that case we add the interval~$[p_{k+1},p_{k+1}+q_{k+1}]$, which is either equal to~$[i_{\ell},i_{\ell}+1]$ or~$[i_{\ell}-1,i_{\ell}]$.
        \item~$i_{\ell}$ is inside or adjacent to an interval~$[p_c,p_c+q_c]$. In both cases, we add~$i_{\ell}$ to the interval~$[p_c,p_c+q_c]$, and we increase the length of that interval by 1 or 2 (we can always do this, if we merge all intervals that are adjacent to each other between each addition).
    \end{itemize}
    This completes the induction proof.
\end{proof}

Now we are ready to prove \cref{lem:clustereddispersed}.

\begin{proof}
    Recalling the definitions of clusteredness and dispersedness, the statement of the lemma is equivalent to saying one of the following two cases holds:
    \begin{alphaenumerate}
        \item  There exist a~$k\in\mathds{N}$ and integers~$p_1, p_2, \ldots, p_k\in [m]$ and~$q_1, \ldots, q_k\in \mathds{N}$ such that the following three statements hold:
        \begin{romanenumerate}
            \item The intervals~$[p_1, p_1+q_1], [p_2, p_2+q_2], \ldots,[p_k, p_k+q_k]$ are pairwise disjoint and contained in~$[1,m]$. Moreover, the union of these intervals contains~$I$.
            \item Let~$K_c=[p_c,p_c+q_c]\cap I$ for~$c\in [k]$. Then for every~$c\in[k]$ the following holds:
            \[
            q_c+1\geq \lfloor\frac{m}{2\ell}\rfloor\cdot \min\left[\max(K_c)-p_c+1,p_c+q_c-\min(K_c)+1\right]
            \]
            \item 
           ~$               \sum_{j=1}^{k}(q_j+1)+2(\ell-\ell')\leq m~$
    \end{romanenumerate}
        \item There exist~$k\in\mathds{Z}_{\geq 0}$ and integers~$\psi, p_1, p_2, \ldots, p_k\in [m]$ and~$\xi , q_1, \ldots, q_k\in \mathds{N}$ such that the following five statements hold:
        \begin{romanenumerate}
        \item The intervals~$[\psi, \psi+\xi],[p_1, p_1+q_1], [p_2, p_2+q_2], \ldots,[p_k, p_k+q_k]$ are pairwise disjoint and contained in~$[1,m]$. Moreover, the union of these intervals contains~$I$.
        \item For~$j=1,2, \ldots, k$, we have~$q_j\geq 2|\{c\in [\ell']:i_c\in [p_j,p_j+q_j]\}|-1$
        \item Let~$K=I\cap [\psi,\psi+\xi]$. Then either~$K=\{\psi\}$ or~$K=\{\psi+\xi\}$
        \item~$\xi\geq \lfloor\frac{m}{2\ell}\rfloor|\{c:i_c\in K\}|-1$
        \item~$        (\xi+1)+\sum_{j=1}^{k}(q_j+1)+2(\ell-\ell')\leq m
       ~$
    \end{romanenumerate} 
    \end{alphaenumerate}
    Throughout this proof, let~$d=\lfloor \frac{m}{2\ell}\rfloor$. Since~$m\geq 4\ell$, we have~$d\geq 2$. We prove the lemma by induction on~$\ell'$.
    
    As induction basis, we have~$\ell'=1$. We show that (b) is true. Note that, in this case,~$\ell\geq 1$ so~$m\geq 2d$. If~$i_1\leq d$, then pick~$[\psi, \psi+\xi]=[i_1, i_1+d-1]$ and~$k=0$. Then we have
    \[
    \xi\geq d\cdot 1-1=\lfloor\frac{m}{2\ell}\rfloor|\{c:i_c\in K\}|-1
    \]
    so condition (iv) is satisfied. Moreover, since~$m\geq 4\ell$, we have~$d\geq 2$, and thus
    \[
    (\xi+1)+2(\ell-\ell') \leq d\cdot \ell'+d(\ell-\ell')=d\ell<m
    \]
    which proves (v). It is easy to see that the other three conditions are also satisfied. In the other case, when~$i_1\geq d+1$, then we pick~$[\psi, \psi+\xi]=[i_i-d+1, i_1]$ instead. Then we have again
    \[
    \xi= d\cdot 1-1=\lfloor\frac{m}{2\ell}\rfloor|\{c:i_c\in K\}|
    \]
    So (iv) is satisfied, and the other conditions are also satisfied.

    Now suppose we have shown that the statement is true for all sequences of \break length~$1,2, \ldots, \ell'-1$, given some~$\ell'$ with~$1<\ell'\leq \ell$. Then we want to show the correctness of the lemma for~$\ell'$. Let~$\lambda=|\{c:i_c=\min(I)\}|$ and let~$\lambda'=|\{c:i_c=\max(I)\}|$. We distinguish three cases:
     \begin{itemize}
         \item Suppose~$\max(I)\leq 2\ell'$ or~$\min(I)\geq m-2\ell'+1$. Then (a) holds, because we can pick~$k=1$, and~$[p_1, p_1+q_1]=[1,2d\ell']$ in the first case or~$[p_1,p_1+q_1]=[m-2d\ell'+1,m]$ in the second. Condition (i) trivially holds. If~$\max(I)\leq 2\ell'$, then (ii) follows from the fact that
         \[
         q_1+1= 2d\ell'=d(2\ell'-1+1) \geq \lfloor\frac{m}{2\ell}\rfloor\left(\max(K_1)-p_1+1\right)
         \]
         and likewise, if~$\min(I)\geq m-2\ell+1$, then~$q_1+1\geq \lfloor\frac{m}{2\ell}\rfloor\left(p_1+q_1-\min(K_1)+1\right)$. Condition (iii) also holds, since:
         \[\sum_{j=1}^{k}(q_j+1)+2(\ell-\ell')=2d\ell'+2(\ell-\ell')\leq 2d\ell'+2d(\ell-\ell') =2d\ell \leq m\]
        \item Suppose~$\lambda d<\min(I)< m-2\ell'+1$. We want to show that (b) holds. We first pick~$k=1$,~$[\psi, \psi+\xi]=[\min(I)-\lambda d+1,\min(I)]$. Note that~$[\min(I)+1,m]$ has at least~$2\ell'$ integers. We can then apply \cref{lem:doubleintervals} to the interval~$[\min(I)+1,m]$ and the remaining sequence elements to find valid intervals~$[p_1,p_1+q_1],\ldots,[p_k,p_k+q_k]$ such that~$q_c+1=2|\{c:i_c\in[p_c,p_c+q_c]\}|$. The conditions (i)-(iv) now hold by definition. For condition (v), using~$d\geq 2$, we get 
        \[        (\xi+1)+\sum_{j=1}^{k}(q_j+1)+2(\ell-\ell')=\lambda d +2(\ell'-\lambda) +2(\ell-\ell')=\lambda d+2(l-\lambda) \leq d\ell < m
        \]
        \item Suppose~$2\ell'<\max(I)< m-\lambda' d+1$. This case is analogous to the previous one.
        \item Suppose~$\min(I)\leq \lambda d$ and~$\max(I)\geq m-\lambda' d+1$. Our goal for this case is to split the interval~$[1,m]$ into two, and then use the induction hypothesis on both intervals. To determine how to split, we consider the function~$f:[\ell]\to [\ell]$ defined by
            \[
                f(\eta) = |\{c:i_c\leq 2d\eta\}|
            \]
        We can make the following three observations about~$f$:
        \begin{itemize}
        \item[$\circ$]~$f$ is nondecreasing and integer-valued. This follows from its definition.
        \item[$\circ$]~$f(\lambda)\geq \lambda$. This is since~$\min(I)\leq \lambda d<2\lambda d$, and by definition of~$\lambda$.
        \item[$\circ$]~$f(\ell'-\lambda')\leq \ell'-\lambda'$. This is since we know that~$\max(I)\geq m-\lambda' d +1$, and since~$m\geq 2d\ell'$ it follows that~$\max(I)\geq m-\lambda' d +1>2d(\ell'-\lambda')$. Combined with the definition of~$\lambda'$, this gives us the observation above.
    \end{itemize}
         It follows immediately from these three facts that~$f$ has a fixed point~$\eta^*$ with~$\lambda\leq \eta^*\leq \ell'-\lambda'$ (note that~$\lambda\leq \ell'-\lambda'$ by definition).

         Now, we split~$[m]$ into two intervals~$[1,2d\eta^*]$ and~$[2d\eta^*,m]$, where the first interval contains~$\eta^*$ elements from the sequence~$i_1,i_2, \ldots, i_{\ell}$. Call this subsequence~$i_1^1,i_2^1, \ldots, i_{\eta^*}^1$. The second interval contains the other~$\ell'-\eta^*$ elements, call these~$i_1^2, \ldots, i_{\ell'-\eta^*}^2$. None of the two sequences are empty by definition of~$\eta^*$. Applying the induction hypothesis to the first interval, we conclude that the sequence~$i_1^1,\ldots,i_{\eta^*}^1$ is~$(2d\eta^*,\eta^*)$-clustered or -dispersed. Likewise, the sequence~$i_1^2, \ldots, i_{\ell'-\eta^*}^2$ is~$(m-2d\eta^*,\ell-\eta^*)$-clustered or -dispersed. To be able to use this, we observe that~$\lfloor \frac{2d\eta^*}{2\eta^*}\rfloor = \lfloor\frac{m}{2\ell}\rfloor$, and that~$\lfloor \frac{m-2d\eta^*}{2(\ell-\eta^*)}\rfloor \geq \lfloor \frac{2d(\ell-\eta^*)}{2(\ell-\eta^*)}\rfloor  = \lfloor\frac{m}{2\ell}\rfloor$. We distinguish two cases:
          \begin{itemize}
        \item[$\circ$] Case (b) holds for at least one of the two intervals~$[1,2d\eta^*]$ and~$[2d\eta^*,m]$. W.\ l.\ o.\ g. (b) holds for the first interval, so we have integers~$\psi, p_1, p_2, \ldots, p_k\in [m]$ and~$\xi , q_1, \ldots, q_k\in \mathds{N}$ that satisfy the conditions (i)-(iv). Now add~$[p_{k+1},p_{k+1}+q_{k+1}]=[2d\eta^*,m]$. Going back to the original problem in the interval~$[m]$ and with~$i_1, i_2, \ldots, i_{\ell}$, one can easily check that the sequences~$\psi, p_1, p_2, \ldots, p_k, p_{k+1}\in [m]$ and~$\xi , q_1, \ldots, q_k, q_{k+1}\in \mathds{N}$ satisfy conditions (i)-(iv) (here we need that~$\lfloor \frac{2d\eta^*}{2\eta^*}\rfloor = \lfloor\frac{m}{2\ell}\rfloor$ for condition (iv)). For condition (v), note that from either (a-iii) or (b-v) from the induction hypothesis for the interval~$[2d\eta^*,m]$ we know that there are at least~$2\left((\ell-\eta^*)-(\ell'-\eta^*)\right)$ elements in~$[2d\eta^*,m]$ that are not contained in an interval. From this immediately follows that (b-v) holds for the complete sequence~$i_1,\ldots, i_{\ell'}$. 
        \item[$\circ$] Only case (a) holds for both the intervals~$[1,2d\eta^*]$ and~$[2d\eta^*,m]$. Then we show that for the original~$[m]$ also condition (a) holds. From the induction hypothesis we get intervals~$[p_1, p_1+q_1], [p_2,p_2+q_2] \ldots, [p_j, p_j+q_j]$ that are in~$[1,2d\eta^*]$, and intervals~$[p_{j+1}, p_{j+1}+q_{j+1}], \ldots, [p_k, p_k+q_k]$ for the interval~$[2d\eta^*,m]$. But then we immediately get that~$[p_1, p_1+q_1], \ldots, [p_k, p_k+q_k]$ is a sequence of intervals that satisfies (i) and (ii) (again, using the bounds on~$\lfloor \frac{2d\eta^*}{2\eta^*}\rfloor$ and~$\lfloor \frac{m-2d\eta^*}{2(\ell-\eta^*)}\rfloor$ to prove (ii)). Again, (iii) follows immediately from statement (iii) of the induction hypothesis for the interval~$[2d\eta^*,m]$.
    \end{itemize}
    \end{itemize}
\end{proof}

\noncycle*
\begin{proof}
    
Obviously, if the improvement rule started at memory state~$h_{\ell_i''}$, we could make the constructions from the previous cases, and this would let~$\Pi^P$ take a superpolynomial number of iterations. To show that this also works starting from state~$h_1$, we make a small modification to make sure that the first~$\ell_i''-1$ steps do not interfere with the binary counter.

We know by \cref{lem:clustereddispersed} that the sequence~$(g_{\ell_i''}, g_{\ell_i''+1},\ldots, g_{\ell_i'})$ is~$(\frac{m_i}{3},\ell_i)$-clustered or -dispersed. Suppose that it is~$(\frac{m_i}{3},\ell_i)$-clustered. Then property (iii) from \cref{def:clustered} tells us that we can assume that at least~$2\left(\ell-(\ell_i'-\ell_i''+1)\right)$ elements of~$[m]$ are not in an interval. This means that, in our construction from the proof of \cref{lem:clustered}, we have at least~$2\left(\ell-(\ell_i'-\ell_i''+1)\right)$ filler gadgets (from \cref{fig:gadgetfiller}). Call the constructed game~$CG_1$.
Since~$\left(\ell-(\ell_i'-\ell_i''+1)\right)\geq 2(\ell_i''-1)$, this gives us enough space to do the following procedure for~$j=\ell_i''-1,\ell_i''-2,\ldots,1$:
\begin{itemize}
    \item Choose two filler gadgets in~$CG_{\ell_i''-j}$ that correspond to numbers of~$[m]$ outside any interval~$[p_c,p_c+q_c]$.
    \item Replace them by the double filler gadget from \cref{fig:gadgetdoublepath}, giving the Bland numbers from one filler gadget to edges~$a_1,a_2,a_3$, and from the other to~$b_1,b_2,b_3$.
    \item Change the Bland number of~$a_1$ such that it is the~$g_{j}$-th improving move\footnote{This will involve some shifting. For example, if~$b$ is the Bland number of the~$(g_{j}-1)$-th improving move, we can give~$a_1$ Bland number~$b+1$, and increase all other Bland numbers above~$b$ by 1.}. Call the game obtained in this manner~$CG_{\ell_i''-j+1}$.
\end{itemize}
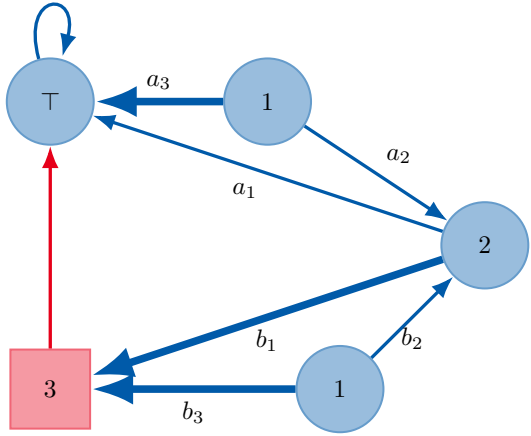
\begin{figure}[tbp]
    \centering
    \resizebox{0.5\linewidth}{!}{%
    \begin{tikzpicture}[node distance = 3cm,
    p0/.style={circle, draw=p0Blue!60, fill=p0Blue!40, thick, minimum size=12mm}, % player 0 nodes
    gen/.style={circle, draw=black, fill=black!20, thick, minimum size=12mm}, % no color
    p0small/.style={circle, draw=p0Blue!60, fill=p0Blue!40, thick, minimum size=6mm}, % no color
    every path/.style={draw=p0Blue,line width=1.3pt,{>=Latex}}, % default: edges are blue
      p1/.style={rectangle, draw=p1Red!60, fill=p1Red!40, thick, minimum size=11mm}, % player 1 nodes
      phan/.style={draw=none, fill=none, inner sep=0pt, outer sep=0pt}] % phantom nodes

    % nodes
    \node[p0] (x) at (6,2) {$2$};
    \node[p0] (y) at (3,4) {$1$};
    \node[p1] (z) at (0,0) {$3$};
    \node[p0] (s) at (0,4) {$\top$}; 
    \node[p0] (w) at (4,0) {$1$};
    % edges 
    \draw (s) edge[loop above] (s);
    \draw (y) edge[->] node [above right] {$a_2$} (x);
    \draw (y) edge[->,line width=3pt] node[above] {$a_3$} (s);
    \draw (x) edge[->,line width=3pt] node [below] {$b_1$} (z);
    \draw (x) edge[->] node [below left] {$a_1$} (s);
    \draw (z) edge[->,p1Red] node [] {} (s);
    \draw (w) edge[->] node[below] {$b_2$} (x);
    \draw (w) edge[->, line width=3] node[below] {$b_3$} (z);

\end{tikzpicture}
    }
    \caption{Double filler gadget that deals with non-repeating memory states.}
    \label{fig:gadgetdoublepath}
\end{figure}
Then, by construction, the first~$\ell_i''-1$ iterations of strategy improvement will switch the edges~$a_1$ from these new gadgets. Moreover, after the switch, the Bland numbers of the remaining switches are equivalent to what they were in~$CG_1$. Hence strategy improvement with improvement rule~$\Pi^P$ takes a superpolynomial number of steps to solve the game~$CG_{\ell_i''}$. The case where~$g_{\ell_i''+1},\ldots, g_{\ell_i'}$ is~$(\frac{m_i}{3},\ell_i)$-dispersed is analogous.
\end{proof}

\subsection{Proof of Theorem~\ref{thm:rankingsPI}}
Combining Lemmas~\ref{lem:Sqrt} and~\ref{lem:notInSqrt} yields Theorem~\ref{thm:rankingsPI}. 
An outline of the main ideas underlying the following lower bound constructions is given in Section~\ref{sec:rankings}.

\begin{figure}
    \centering
    \resizebox{\linewidth}{!}{%
    \begin{tikzpicture}[node distance = 3cm,
    p0/.style={circle, draw=p0Blue!60, fill=p0Blue!40, thick, minimum size=11mm,inner sep=0pt, outer sep=0pt}, % player 0 nodes
    every path/.style={draw=p0Blue,line width=1.2pt,{>=Latex}}, % default: edges are blue
      p1/.style={rectangle, draw=p1Red!60, fill=p1Red!40, thick, minimum size=9mm,inner sep=0pt, outer sep=0pt}, % player 1 nodes
      phan/.style={draw=none, fill=none, inner sep=0pt, outer sep=0pt}, % phantom nodes
      ]
    
      % Levels 1 to 3
      \foreach \i in {1,2,3} {
        \pgfmathsetmacro\x{3*(\i - 1)}
        \node[p0] (a\i) at (\x,3) {$\alpha_{\i}$};
        \node[p0] (b\i) at (\x,0) {$\beta_{\i}$};
      }
    
      % Edges between levels 1–2 and 2–3
      \foreach \i/\label in {1/L,2/L^2} {
        \pgfmathtruncatemacro{\j}{\i + 1}
        \draw (a\i) edge [-{Latex[length=5mm]}, line width=3pt] (a\j);
        \draw (a\i) edge [-{Latex[length=4mm]}] node [near start, below left=-7pt and 5pt] {$\label$} (b\j);
        \draw (b\i) edge [-{Latex[length=4mm]}] (a\j);
        \draw (b\i) edge [-{Latex[length=5mm]}, line width=3pt] node [near start,below left=3pt and 0pt] {$\label$} (b\j);
      }
    
      % Horizontal dots between level 3 and n-1
      \node[phan] at (8,3) {$\cdots$};
      \node[phan] at (8,0) {$\cdots$};
    
      % Levels n-1, n, n+1
      \foreach \i/\label in {4/L, 5/{L+1}} {
        \pgfmathsetmacro\x{3*\i-2}
        \node[p0] (a\i) at (\x,3) {$\alpha_{\label}$};
        \node[p0] (b\i) at (\x,0) {$\beta_{\label}$};
      }
    
      % Edges between levels n-1, n, and n+1
      \foreach \i/\label in {4/L} {
        \pgfmathtruncatemacro{\j}{\i + 1}
        \draw (a\i) edge [-{Latex[length=5mm]}, line width=3pt] (a\j);
        \draw (a\i) edge [-{Latex[length=4mm]}] node [near start, below left=-7pt and 4pt] {$\label^{\label}$} (b\j);
        \draw (b\i) edge [-{Latex[length=4mm]}] (a\j);
        \draw (b\i) edge [-{Latex[length=5mm]}, line width=3pt] node [near start,below left=3pt and -3pt] {$\label^{\label}$} (b\j);
      }
    
      % sink  
        \node[p0] (c) at (15,1.5) {$\top$};
        \draw (c) edge [out=45,in=-45, loop, looseness=5,-{Latex[length=5mm]}, line width=3pt] (c);
        \draw (a5) edge [-{Latex[length=5mm]}, line width=3pt] node [near start, above right=3pt and -3pt] {$L^{L+1}$} (c);
        \draw (b5) edge [-{Latex[length=5mm]}, line width=3pt] (c);
    
    \end{tikzpicture}}
    \caption{The Markov decision process~$\mathcal{M}_L$ from the proof of Lemma~\ref{lem:BDL_with_fequals1}. Arc labels denote rewards, actions without a label yield no reward. Bold edges indicate the initial policy~$\sigma_0$.}
    \label{fig:BDL_with_fequals1}
\end{figure}
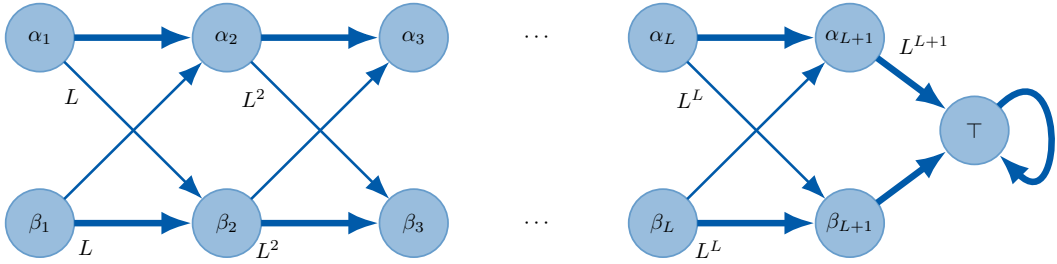

\begin{lemma}\label{lem:BDL_with_fequals1}
    Let~$\Pi$ be a~$\{\textsc{Bland}, \textsc{Dantzig}, \textsc{LargestIncrease}\}$-ranking-based pivot rule with~$f_{\Pi}(m)=1$ for all~$m\in\N$.
    Then, there exists a family of weak unichain Markov decision processes with~$N$ non-zero transition probabilities such that \textsc{PolicyIteration} with pivot rule~$\Pi$ takes~$\Omega(2^{N})$ iterations.
\end{lemma}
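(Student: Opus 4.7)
The plan is to analyze \textsc{PolicyIteration} with pivot rule $\Pi$ started at the initial policy $\sigma_0$ of the MDP $\mathcal{M}_L$ from Figure~\ref{fig:BDL_with_fequals1}, and to show that it performs $\Omega(2^L)$ action switches before terminating. Since $\mathcal{M}_L$ has $O(L)$ states and actions, each with only constantly many non-zero transition probabilities, the input size is $N = O(L)$, so $\Omega(2^L)$ switches correspond to $\Omega(2^N)$ iterations. The underlying strategy is to identify the sequence of policies visited by the algorithm with the successive configurations of an $L$-bit binary counter encoded by the choices between $\alpha$- and $\beta$-successors; each application of an improving switch by $\Pi$ will correspond to one bit flip of an increment operation of the counter.

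First I would compute the valuation $\val_{\sigma}$ of each policy arising along the run explicitly. Because the rewards in $\mathcal{M}_L$ are geometrically spaced $L, L^2, \ldots, L^{L+1}$, with $L$ chosen large relative to the number of levels, values decompose cleanly across levels: the influence of the level-$i$ choices on $\val_\sigma$ is of order $L^i$, and these contributions are well-separated across levels. For each visited policy $\sigma$, I would tabulate the set $\Gamma^+(\sigma)$ of improving switches together with their reduced costs $\rc_{\sigma}(a)$ and their induced objective improvements $\sum_{s \in S}(\val_{\sigma^a}(s) - \val_\sigma(s))$. The key calculation to perform is a uniform level-monotonicity statement: for any two improving switches $a$ and $a'$ at levels $i < j$ available at some visited policy, one has $\rc_\sigma(a) < \rc_\sigma(a')$, and the total improvement from $a$ is strictly smaller than that from $a'$, with the gap being driven by the dominant rewards $L^i$ versus $L^j$.

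From this, the neighbor rankings \textsc{Dantzig} and \textsc{LargestIncrease} both induce the level-order on $\Gamma^+(\sigma)$. Fixing the Bland indexing of the actions so that low-level actions receive the largest indices, \textsc{Bland} agrees as well. The symmetry condition in Definition~\ref{def:rankingbased} then forces $\Pi$ to behave according to $f_\Pi$ at every visited policy; since $f_\Pi \equiv 1$ selects the minimum of the common preorder, $\Pi$ applies an improving switch at the lowest available level. A final induction on the iteration count shows that this choice precisely realizes the bit flips of the binary counter's successive increment operations, so that the total number of switches is at least $2^L - 1$.

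The main obstacle I anticipate is the level-monotonicity step. After a low-level flip, the value function can change in ways that rearrange the relative positions of higher-level improving switches, and ensuring that the lowest-available-level switch remains strictly minimal under \emph{both} Dantzig and LargestIncrease throughout the entire run demands a tight inductive invariant controlling the signs of the lower-order contributions to the values. A secondary difficulty is handling ties: when two improving switches yield identical reduced cost or identical objective improvement, the common-preorder condition must still be preserved, which further constrains the allowed Bland labeling of actions. I expect to handle both by choosing the base $L$ large enough that level-$i$ contributions dominate the aggregate perturbation from all strictly lower levels combined, and by arranging the actions at each level so that no two of them become symmetric along the counter trajectory.
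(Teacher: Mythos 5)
Your plan follows the same route as the paper: use the binary-counter MDP $\mathcal{M}_L$, show that at every visited policy the three rankings \textsc{Bland}, \textsc{Dantzig}, \textsc{LargestIncrease} induce a common total order on the improving switches that is increasing in the level, and conclude via the symmetry condition and $f_\Pi\equiv 1$ that $\Pi$ always flips the lowest-level improving switch, realizing $2^L-1$ counter increments. You correctly identify the two potential problems (cross-level dominance and ties), and the cross-level side is fine: for improving switches at distinct levels $i<j$ one does get $\rc(a_i)<\rc(a_j)$ and the same for objective improvement, because reduced costs are $\Theta(L^{i})$ while the multiplicity factor (the number of states whose value changes) is at most $L$, so choosing $L$ large suffices. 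So far this matches the paper.

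The genuine gap is in your tie handling, and it is a \emph{within-level} tie, which neither of your proposed remedies touches. After a canonical policy for $b$ flips the switch in $\beta_j$ (with $j=\min\{\ell:b_\ell=0\}>1$), the states $\alpha_{j-1}$ and $\beta_{j-1}$ both acquire an improving switch to $\beta_j$ with \emph{identical} reduced cost exactly $L^{j-1}$, since both currently point to $\alpha_j$ and $\val(\alpha_j)=\val(\beta_j)$ at that moment. Increasing $L$ does not separate two equal quantities, and rearranging Bland indices is irrelevant: the Dantzig preorder still has a tie, so the three rankings no longer coincide as total orders, and once they disagree the function $f_\Pi$ no longer governs $\Pi$'s behavior at all (recall $f_\Pi$ is only defined, and only constrains $\Pi$ via the symmetry condition, on inputs where all $k$ neighbor rankings agree). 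An adversarial $\Pi$ in the stated class could then deviate from the intended counter trajectory. The paper resolves this by perturbing $\mathcal{M}_L$ asymmetrically: each action leaving $\alpha_\ell$ is replaced by a randomized action that, with small probability $\epsilon$, jumps to the sink, which scales $\rc(\alpha_{j-1})$ by $(1-\epsilon)$ and strictly breaks the Dantzig (and \textsc{LargestIncrease}) tie while preserving the rest of the analysis since values change only by $O(\epsilon)$. Your write-up needs an analogous asymmetric perturbation; without it, the ``common preorder'' premise fails precisely at the intermediate policies where the carry propagates.
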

\begin{proof}
    Fix some~$L\in\N_{\geq 2}$ and consider the Markov decision process~$\mathcal{M}_L$, see Figure~\ref{fig:BDL_with_fequals1}, that consists of~$L+1$ levels such that 
    \begin{itemize}
        \item each level~$\ell\in\{1,2,\ldots,L+1\}$ contains two states, called~$\alpha_\ell$ and~$\beta_\ell$,
        \item each~$\alpha_\ell$ or~$\beta_\ell$ with~$\ell\leq L$ admits two actions, one of which points to~$\alpha_{\ell+1}$ without reward, the other points to~$\beta_{\ell+1}$ yielding a reward of~$L^\ell$, 
        \item~$\alpha_{L+1}$ admits a single action, which yields a reward of~$L^{L+1}$ and points to the sink~$\top$,
        \item~$\beta_{L+1}$ also admits only one action, which points to~$\top$ without reward.
    \end{itemize}
    Note that~$\mathcal{M}_L$ is weak unichain since it is acyclic.

    Consider some~$b\in\{0,1,\ldots,2^L-1\}$ and let~$b_\ell\in\{0,1\}$ such that~$b=\sum_{\ell=1}^L b_\ell 2^{\ell-1}$.
    We say that a policy~$\sigma$ for~$\mathcal{M}_L$ is \emph{canonical for~$b$} if
    \begin{itemize}\label{def_canonical}
        \item~$\sigma(\alpha_\ell) = \sigma(\beta_\ell)$ if and only if~$b_\ell=1$,
        \item~$\val_\sigma(\sigma(\alpha_\ell)) = \max\left\{ \val_\sigma(\alpha_{\ell+1}),\, L^\ell+\val_\sigma(\beta_{\ell+1}) \right\}$,
    \end{itemize}
    for all~$\ell\in\{1,2,\ldots,L\}$.
    By the second condition, there is no improving switch for~$\sigma$ in any~$\alpha_\ell$.
    Hence, by the first condition, there is no improving switch in the states~$\beta_\ell$ with~$b_\ell=1$ either.    
    It is easy to see that the policy~$\sigma_0$ determined by~$\sigma_0(\alpha_\ell)=\alpha_{\ell+1}$ and~$\sigma_0(\beta_\ell)=\beta_{\ell+1}$, for all~$\ell\in\{1,2,\ldots,L\}$, see Figure~\ref{fig:BDL_with_fequals1}, is canonical for~$0$.
    
    Let~$\Pi$ be an arbitrary~$\{\textsc{Bland},\allowbreak \textsc{Dantzig},\allowbreak \textsc{LargestIncrease}\}$-\allowbreak\! ranking-based pivot rule with~$f_{\Pi} \equiv 1$.
    We prove that \textsc{PolicyIteration} with pivot rule~$\Pi$ and initial policy~$\sigma_0$ visits the canonical policy for every~$b\in\{0,1,\ldots,2^L-1\}$, by an induction on~$b$.
    The induction basis, where~$b=0$, is trivial by our choice of the initial policy.
    So assume that \textsc{PolicyIteration} with pivot rule~$\Pi$ visits a policy~$\sigma$ that is canonical for some fix~$b\in\{0,1,\ldots,2^L-2\}$.
    
    Since~$\sigma$ is canonical for~$b$, the only improving switches for~$\sigma$ are in the states~$\beta_\ell$ with~$b_\ell=0$.
    Observe that
    \begin{equation}\label{eq:fequals1_mindiff}
    \vert \val_{\tau}(\alpha_{\ell+1}) - \left( L^\ell+\val_{\tau}(\beta_{\ell+1}) \right) \vert \geq L^\ell, \text{ for every policy~$\tau$ and all~$\ell\in\{1,2,\ldots,L\}$}       
    \end{equation}
    because all possible values of~$\alpha_{\ell+1}$ and~$\beta_{\ell+1}$ are multiples of~$L^{\ell+1}$.
    Therefore, the switches in the states~$\beta_\ell$ with~$b_\ell=0$ are exactly the improving switches for~$\sigma$.

    Let~$\Pi_{\textsc{Bland}}$ be a~$\{\textsc{Bland}\}$-ranking-based rule with~$f_{\Pi_{\textsc{Bland}}}(m)=1$ for all~$m\in\N$, that is,~$\Pi_{\textsc{Bland}}$ always picks the improving switch with the largest index in a pre-defined ordering of the actions.\footnote{Recall that, by definition, \textsc{Bland} selects the improving switch with the smallest index.}    
    We will first show that \textsc{PolicyIteration} with pivot rule~$\Pi_{\textsc{Bland}}$ transforms~$\sigma$ into a canonical policy for~$b+1$, and afterwards argue that~$\Pi$ mimics this behaviour of~$\Pi_{\textsc{Bland}}$.
    Let the \textsc{Bland} ranking of the actions in~$\mathcal{M}_L$ be such that~$\Pi_{\textsc{Bland}}$ prefers (improving) switches in~$\alpha_\ell$ over switches in~$\beta_\ell$, and switches in~$\alpha_\ell$ or~$\beta_\ell$ over switches in~$\alpha_k$ or~$\beta_k$, for all~$\ell\in\{1,2,\ldots,L\}$ and~$k\in\{\ell+1,\ell+2,\ldots,L+1\}$.

    By definition of the \textsc{Bland} ranking, given the canonical policy~$\sigma$, the rule~$\Pi_{\textsc{Bland}}$ applies the switch in the state~$\beta_j$ with~$j=\min\{\ell\in\{1,2,\ldots,L\}\colon b_\ell=0\}$.
    If~$j=1$, then the resulting policy is already canonical for~$b+1$.
    Note that this is the case if~$b$ is even, where~$b$ and~$b+1$ only differ in the first bit.
    Now assume~$j>1$.

    Since~$\sigma$ is canonical for~$b$, we have~$\sigma(\beta_{j-1}) = \sigma(\alpha_{j-1})$ and~$\sigma(\beta_{j}) \neq \sigma(\alpha_{j})$.
    Further, there is no improving switch for~$\sigma$ in the states~$\{\alpha_{\ell}\,\colon\, \ell=1,2,\ldots,L\}$, so equation~\eqref{eq:fequals1_mindiff} yields~$\val_\sigma(\alpha_j) - \val_\sigma(\beta_j) \geq L^j > L^{j-1}$ and thus~$\sigma(\beta_{j-1}) = \sigma(\alpha_{j-1}) = \alpha_j$.
    Therefore~$\val_\sigma(\beta_{j-1}) - \val_\sigma(\alpha_{j-1}) = L^{j-1}$, and by analogous arguments we obtain
    \begin{equation}\label{eq:fequals1_sigma}
        \sigma(\beta_{\ell}) = \sigma(\alpha_{\ell}) = \beta_{\ell+1}, \qquad \text{for all } \ell\in\{1,2,\ldots,j-2\}.
    \end{equation}    
    If~$j=2$, we consider~\eqref{eq:fequals1_sigma} trivial.

    The policy that is obtained from~$\sigma$ by switching in~$\beta_j$ satisfies~$\val(\alpha_j)=\val(\beta_j)$.
    Due to the behavior of~$\sigma$ in the first~$j-1$ levels, this yields that the next improving switches that~$\Pi_{\textsc{Bland}}$ applies are in~$\alpha_{j-1}$,~$\alpha_{j-2}$,$\,\ldots\,$,~$\alpha_{2}$, and~$\alpha_{1}$ -- in this order.
    That is, starting at the policy~$\sigma$, \textsc{PolicyIteration} with~$\Pi_{\textsc{Bland}}$ applies exactly one switch in each of the first~$j$ levels, and the resulting policy~$\sigma'$ does not admit an improving switch in any~$\alpha_\ell$ with~$\ell\in\{1,2,\ldots,j\}$.
    This yields that~$\sigma'$ is canonical for~$b+1$, which concludes the induction step for~$\Pi_{\textsc{Bland}}$.

    It remains to show that~$\Pi$ mimics the behavior of~$\Pi_{\textsc{Bland}}$.
    Let~$\mathcal S$ denote the sequence of policies that~$\Pi_{\textsc{Bland}}$ visits during the transformation of~$\sigma$ into~$\sigma'$, where~$\sigma$ is included and~$\sigma'$ is excluded.
    Fix some~$\sigma''\in\mathcal{S}$ and let~$\preceq^{\textsc{B}}_{\sigma''}$,~$\preceq^{\textsc{D}}_{\sigma''}$, and~$\preceq^{\textsc{LI}}_{\sigma''}$ denote the total preorders induced by \textsc{Bland}, \textsc{Dantzig}, and \textsc{LargestIncrease}, respectively, on the set of improving switches for~$\sigma''$.
    If~$\preceq^{\textsc{B}}_{\sigma''}\, =\, \preceq^{\textsc{D}}_{\sigma''}\, =\, \preceq^{\textsc{LI}}_{\sigma''}$, then~$\Pi$ picks the same improving switch for~$\sigma''$ as~$\Pi_{\textsc{Bland}}$ since~$f_{\Pi}(m) = f_{\Pi_{\textsc{Bland}}}(m)=1$ for all~$m\in\N$.
    Thus, for the induction step, it suffices to prove~$\preceq^{\textsc{B}}_{\sigma''}\, =\, \preceq^{\textsc{D}}_{\sigma''}\, =\, \preceq^{\textsc{LI}}_{\sigma''}$ for the arbitrarily chosen~$\sigma''\in\mathcal{S}$.

    For every~$\ell\in\{1,2,\ldots,L\}$, we define
    \[
    \lambda(\ell)\coloneqq\min\left\{ k\in\{ \ell+1,\ell+2,\ldots,L+1\}\colon\, b_k=1 \text{ or } k=L+1 \right\}
    \]
    and
    \[
    \mu(\ell)\coloneqq \max\left\{ k\in\{0,\ldots,\ell\}\colon\, b_{\ell-i}=0,\,\forall i\in\{0,\ldots,k-1\} \right\} \geq 0.
    \]     
    We consider the case~$\sigma''=\sigma\in\mathcal{S}$ first.
    Recall that the improving switches for the canonical policy~$\sigma$ are the switches in the states~$\beta_\ell$ with~$b_\ell=0$.
    Fix some arbitrary~$\ell_0\in\{1,2,\ldots,L\}$ with~$b_{\ell_0}=0$, and let~$\lambda\coloneqq\lambda(\ell_0)$.
    
    By definition of~$\lambda$, we have~$b_\lambda = 1$ or~$\lambda=L+1$, and therefore~$\sigma(\alpha_{\lambda}) = \sigma(\beta_{\lambda})$.
    This yields~$\val_\sigma(\alpha_{\lambda-1}) - \val_\sigma(\beta_{\lambda-1}) = L^{\lambda-1}$ due to~$b_{\lambda-1}=0$.
    Additionally, we have~$b_\ell=0$ for all~$\ell\in\{ \ell_0,\ell_0+1,\dots,\lambda-2\}$, which gives us~$\sigma(\beta_{\ell}) \neq \sigma(\alpha_{\ell}) = \alpha_{\ell+1}$.
    Thus, the improving switch in~$\beta_{\ell_0}$ has reduced costs of~$\rc(\beta_{\ell_0}) \coloneqq L^{\lambda-1}-\sum_{\ell=\ell_0}^{\lambda-2} L^\ell \in \{ L^{\ell_0},L^{\ell_0}+1,\ldots,L^{\lambda-1} \}$.

    Next, we consider the objective improvement induced by this switch.
    Recall that the objective in Markov decision processes is given by the sum over all state values.
    As the game~$\mathcal{M}_L$ is ayclic, the switch in~$\beta_{\ell_0}$ increases the value of~$\beta_{\ell_0}$ by~$\rc(\beta_{\ell_0})$.
    In particular, it increases the values of all states from which~$\sigma$ reaches~$\beta_{\ell_0}$ by~$\rc(\beta_{\ell_0})$.
    
    Due to~$\sigma(\alpha_{\ell_0})\neq \sigma(\beta_{\ell_0})$, equation~\eqref{eq:fequals1_mindiff} yields~$\val_\sigma(\alpha_{\ell_0}) - \val_\sigma(\beta_{\ell_0}) \geq L^{\ell_0}$.
    By an inductive argument, this implies~$\sigma(\alpha_{\ell_0-i})=\alpha_{\ell_0-i+1} \neq \sigma(\beta_{\ell_0-i})$ for all~$i\in\{1,2,\ldots,\mu(\ell_0)-1\}$.
    Thus, if~$\mu(\ell_0)=\ell_0$, then~$\sigma$ reaches~$\beta_{\ell_0}$ from the~$\mu(\ell_0)-1\geq 0$ states in~$\{\beta_\ell\,\colon\,\ell\in\left\{1,2,\ldots,\ell_0-1\right\}\}$.
    Otherwise, we have~$b_{\ell_0-\mu(\ell_0)}=1$, which yields~$\sigma(\beta_{\ell_0-\mu(\ell_0)}) = \sigma(\alpha_{\ell_0-\mu(\ell_0)}) = \alpha_{\ell_0-\mu(\ell_0)+1}$.
    So, again, the policy~$\sigma$ reaches~$\beta_{\ell_0}$ from~$\mu(\ell_0)-1\geq 0$ other states, namely the states in the set~$\{\beta_\ell\,\colon\,\ell\in\left\{\ell_0-\mu(\ell_0)+1,\ell_0-\mu(\ell_0)+2,\ldots,\ell_0-1\}\right\}$.
    We conclude that the objective improvement of the switch in~$\beta_{\ell_0}$ is given by~$\mu(\ell_0) \cdot \rc(\beta_{\ell_0})$.

    Fix some arbitrary~$k\in\{\ell_0+1,\ell_0+2,\ldots,L\}$ with~$b_k=0$, if such a~$k$ exists.
    We obtain~$\preceq^{\textsc{B}}_{\sigma}\, =\, \preceq^{\textsc{D}}_{\sigma}\, =\, \preceq^{\textsc{LI}}_{\sigma}$ if we can show that the reduced costs and the objective improvement of the switch in~$\beta_k$ are larger than the corresponding values in~$\beta_{\ell_0}$.
    
    First, assume~$k\leq\lambda-1$.
    Then, the reduced costs of the switch in~$\beta_k$ are
    \[
    \rc(\beta_k)\coloneqq L^{\lambda-1}-\sum_{\ell=k}^{\lambda-2} L^\ell > L^{\lambda-1}-\sum_{\ell=\ell_0}^{\lambda-2} L^\ell = \rc(\beta_{\ell_0}).
    \]
    Further, we have~$\mu(k)>\mu(\ell_0)$, which yields~$\mu(k)\cdot\rc(\beta_k) > \mu(\ell_0) \cdot \rc(\beta_{\ell_0})$.
    Hence, the reduced costs and the objective improvement of the switch in~$\beta_k$ are both larger than those of the switch in~$\beta_{\ell_0}$.
    
    Now assume~$k\geq\lambda$, which yields~$\lambda<L+1$.
    Then, we have~$b_k=0\neq b_\lambda$ and thus~$k\neq\lambda$.
    Further,
    \[
    \rc(\beta_k) = L^{\lambda(k)-1}-\sum_{\ell=k}^{\lambda(k)-2} L^\ell \geq L^{k} > L^{\lambda-1}-\sum_{\ell=\ell_0}^{\lambda-2} L^\ell = \rc(\beta_{\ell_0}).
    \] 
    We have~$\mu(k)\geq 1$ and~$k>\lambda$ and~$\mu(\ell_0)\leq L$.
    Therefore, 
    \[
    \mu(k)\rc(\beta_k) \geq L^{k} > \mu(\ell_0)\cdot \left( L^{\lambda-1}-\sum_{\ell=\ell_0}^{\lambda-2} L^\ell \right) = \mu(\ell_0)\rc(\beta_{\ell_0}).
    \]
    We conclude~$\preceq^{\textsc{B}}_{\sigma''}\, =\, \preceq^{\textsc{D}}_{\sigma''}\, =\, \preceq^{\textsc{LI}}_{\sigma''}$ for the case~$\sigma''=\sigma$.
    Therefore,~$\Pi$ also applies the improving switch in~$\beta_j$ to~$\sigma$.
    Now let~$\sigma''$ be the policy that we obtain from~$\sigma$ by switching~$\beta_j$.
    
    This switch does not affect the reduced costs of the remaining improving switches for~$\sigma$ since all of them are in higher levels.
    Further, it decreases the value of~$\mu(\ell)$ by~$1$ for all levels~$\ell\in\{j+1,j+2,\ldots,\lambda(j)-1\}$.
    This decreases the objective improvement of the switches in these levels, but it does not change their relative ranking by objective improvement.
    The objective improvement of switches above level~$\lambda(j)-1$ remains unchanged.
    In particular, the switch in~$\beta_j$ does not increase the objective improvement of the remaining improving switches above level~$j$, and it does not change their relative ranking by objective improvement.   
    
    However, the switch creates two new improving switches, which are the switches in the states~$\alpha_{j-1}$ and~$\beta_{j-1}$.
    Recall that~$\sigma''(\beta_{j-1}) = \sigma''(\alpha_{j-1}) = \alpha_j$ and~$\sigma''(\beta_{\ell}) = \sigma''(\alpha_{\ell}) = \beta_{\ell+1}$ for all~$\ell\in\{1,2,\ldots,j-2\}$, see~\eqref{eq:fequals1_sigma}.

    Therefore, the switches in~$\alpha_{j-1}$ and~$\beta_{j-1}$ have reduced costs~$\rc(\alpha_{j-1}) = \rc(\beta_{j-1}) = L^{j-1}$.
    At this point, we could assume that~$\Pi$ breaks ties by using \textsc{Bland}.
    Instead, to avoid all ties, we modify the binary counter~$\mathcal{M}_L$ a little.
    We replace every action~$(\alpha_\ell,x)\in\mathcal{M}_L$, where~$\ell\in\{1,2,\ldots,L\}$ and~$x\in\{\alpha_{\ell+1},\beta_{\ell+1}\}$, by a randomized action that starts in~$\alpha_\ell$ and leads with probability~$1-\epsilon$ to the state~$x$ and with probability~$\epsilon$ to the sink~$\top$, where~$\epsilon>0$ is chosen small enough.
    Note that this transformation alters values in~$\mathcal{M}_L$ at most by an amount that is proportional to~$\epsilon$.
    Since all values were integral before and since there are no ties in \textsc{Bland} or during the switch in~$\beta_j$, this modification of~$\mathcal{M}_L$ does not affect the previous analysis if we choose~$\epsilon$ small enough.

    This modification scales the reduced costs~$\rc(\alpha_{j-1})$ of the switch in~$\alpha_{j-1}$ by a factor of~$1-\epsilon$, that is, we have~$\rc(\alpha_{j-1}) = (1-\epsilon)L^{j-1} <  L^{j-1} = \rc(\beta_{j-1})$.
    By the structure of~$\sigma''$ in the first~$j-1$ levels,~$\sigma''$ does not reach~$\alpha_{j-1}$ from any other state.
    Hence, the objective improvement of the switch in~$\alpha_{j-1}$ equals its reduced costs.
    Further, the objective improvement of the switch in~$\beta_{j-1}$ is~$(2\cdot\max\{0,j-2\}+1)\rc(\beta_{j-1}) < 2L\rc(\beta_{j-1}) < L^{j+1}$, while the reduced costs of switches in states above level~$j$ are at least~$L^{j+1}$.
    Thus, we obtain~$\preceq^{\textsc{B}}_{\sigma''}\, =\, \preceq^{\textsc{D}}_{\sigma''}\, =\, \preceq^{\textsc{LI}}_{\sigma''}$.
    Therefore,~$\Pi$ again mimics~$\Pi_{\textsc{Bland}}$ and it applies the switch in~$\alpha_{j-1}$ to~$\sigma''$.

    Finally, let~$\sigma''$ denote the policy that results from~$\sigma$ by applying the improving switches in~$\beta_j$ and~$\alpha_{j-1}$,~$\alpha_{j-2}$,~$\ldots$,~$\alpha_{j-k}$ for some~$k\in\{1,2,\ldots,j-2\}$.
    Analogous arguments as above yield that the reduced costs and the objective improvement of the switch in~$\alpha_{j-k-1}$ are smaller than the corresponding values of the switch in~$\beta_{j-k-1}$, which are smaller than the corresponding values of the switch in~$\beta_{j-k}$.    
    This yields that~$\Pi$ also applies the improving switch in~$\alpha_{j-k-1}$ next. 
    We obtain that~$\Pi$ mimics~$\Pi_{\textsc{Bland}}$ until reaching the canonical policy for~$b+1$, which concludes the induction.
    
    We obtain that it takes \textsc{PolicyIteration} with pivot rule~$\Pi$ at least~$2^L-2$ iterations to find the optimum.
    Since all~$4L+3$ actions in~$\mathcal{M}_L$ are deterministic, we also have~$4L+3$ non-zero transition probabilities, so we obtain the desired exponential lower bound for~$\Pi$ on the family~$(\mathcal{M}_L)_{L\in\N_{\geq 2}}$.
\end{proof}

\begin{lemma}\label{lem:Sqrt}
    Let~$\Pi$ be a~$\{\textsc{Bland}, \textsc{Dantzig}, \textsc{LargestIncrease}\}$-ranking-based pivot rule with~$f_{\Pi}(m)=o(\sqrt{m})$.
    Then, there exists a family of weak unichain Markov decision processes with~$N$ non-zero transition probabilities such that \textsc{PolicyIteration} with pivot rule~$\Pi$ takes~$\Omega(2^{\sqrt N})$ iterations.
\end{lemma}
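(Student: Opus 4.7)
The plan is to reuse the family $(\mathcal{M}_L)_{L \in \N_{\geq 2}}$ from the proof of \cref{lem:BDL_with_fequals1} and inflate it so that the hypothesis $f_\Pi(m) = o(\sqrt m)$ forces $\Pi$ on the inflated process to mimic the behaviour that $f \equiv 1$ would induce on the original. Concretely, I would construct $\mathcal{M}_L'$ from $\mathcal{M}_L$ by replacing each non-trivial action $a$ by a bundle of $k = k(L)$ parallel copies $a^{(1)}, \dots, a^{(k)}$ sharing the source state of $a$ and having the same target and reward as $a$, up to tiny perturbations. The perturbations are used to break ties so that \textsc{Bland}, \textsc{Dantzig}, and \textsc{LargestIncrease} all induce total orders on the improving switches in every visited policy of $\mathcal{M}_L'$, while simultaneously keeping the $k$ copies of every original action grouped together as a contiguous block in each of the three orderings.

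I would set $k = L$, so that $\mathcal{M}_L'$ has $O(L)$ states, $O(L^2)$ actions, and $N = O(L^2)$ non-zero transition probabilities. Since the number $m_L$ of improving switches during the counter simulation in \cref{lem:BDL_with_fequals1} is $O(L)$ throughout, the inflated count $k \cdot m_L = O(L^2)$ satisfies $f_\Pi(k \cdot m_L) \leq k$ for all sufficiently large $L$, using $f_\Pi(m) = o(\sqrt{m})$. If the perturbations are chosen on a fine enough scale so that the block of copies of $a$ occupies exactly the position of $a$ itself in each of the three rankings, then the $f_\Pi(m')$-th element of the common ordering on $\mathcal{M}_L'$ is always a copy of the action that a rule with $f \equiv 1$ would have picked in $\mathcal{M}_L$.

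With this correspondence in place, the inductive argument of \cref{lem:BDL_with_fequals1} (over the binary counter value $b$) lifts to $\mathcal{M}_L'$ almost verbatim: starting from an initial policy that selects the first copy in every bundle corresponding to $\sigma_0$, \textsc{PolicyIteration} with $\Pi$ visits a canonical lift of the policy canonical for each $b \in \{0, 1, \dots, 2^L - 1\}$. This yields at least $2^L - 2$ iterations, which together with $N = O(L^2)$ gives the claimed $\Omega(2^{\sqrt N})$ bound.

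The main obstacle is controlling the perturbations. The argument of \cref{lem:BDL_with_fequals1} relies on strict inequalities (separated by multiplicative factors of $L$) between reduced costs and between objective improvements of switches at different levels, and these must survive the perturbations that distinguish the copies. Choosing perturbations at a scale finer than the smallest level gap in $\mathcal{M}_L$ suffices. A subsidiary point is that a copy $a^{(j)}$ is improving for a policy of $\mathcal{M}_L'$ exactly when $a$ would be improving for the corresponding policy of $\mathcal{M}_L$, so that the number of improving switches inflates cleanly by a factor of $k$; this holds as long as the perturbations do not flip any sign of a reduced cost, which is automatic for the chosen scale.
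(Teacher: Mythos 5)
Your proposal is correct and follows essentially the same strategy as the paper's proof: inflate the binary-counter process from Lemma~\ref{lem:BDL_with_fequals1} so that the count of improving switches at any visited policy becomes roughly the square of the original, and then invoke $f_\Pi(m)=o(\sqrt m)$ to force $\Pi$ into the bottom block and hence onto a copy of the least-preferred original switch. The only noteworthy difference is in the inflation gadget: the paper routes each copy of an action $(x,y)$ through a fresh intermediate state (with $k$ chosen as the total number of actions in $\mathcal M'$, so $m_\sigma\le k^2$ is immediate) rather than adding parallel actions with perturbed rewards, which sidesteps the near-duplicate columns in the induced LP and spares one from having to verify that the perturbed \textsc{Bland}, \textsc{Dantzig}, and \textsc{LargestIncrease} rankings actually coincide as a single total order (your write-up only checks that they are total orders with matching block structure, which is not quite enough for $f_\Pi$ to determine $\Pi$'s behavior).
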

\begin{proof}
By~$f_\Pi(m)=o(\sqrt m)$, there exists~$m_0\in\N$ such that~$f_\Pi(m)\leq\sqrt m$ for all~$m\geq m_0$.
Let~$\mathcal{M}'$ be one of the Markov decision processes due to Lemma~\ref{lem:BDL_with_fequals1} such that there are at least~$m_0$ actions in~$\mathcal{M}'$.
We denote the number of actions in~$\mathcal{M}'$ by~$k\geq m_0$.

Let~$\mathcal{M}$ be the Markov decision process that we obtain from~$\mathcal{M}'$ by copying all actions, except the action in the sink,~$k$ times.
That is, for every action~$(x,y)$ in~$\mathcal{M}'$, which is not the action in the sink and which yields a reward of~$\rew(x,y)$, we introduce~$k$ artificial states in~$\mathcal{M}$, add a deterministic action with reward~$\rew(x,y)$ from~$x$ to each of these new states, add a deterministic action without reward from each of the new states to~$y$, and finally delete the action~$(x,y)$.
Observe that~$\mathcal{M}$ is acyclic and, thus, weak unichain.

Now let~$\sigma$ denote a non-optimal policy visited during a run of~$\Pi$ on~$\mathcal{M}$, and let~$m_\sigma$ denote the number of improving switches for~$\sigma$. 
Note that~$m_\sigma$ is an integer multiple of~$k$ since, for each action in~$\mathcal{M}'$, either all or none of its copies are improving in~$\mathcal{M}$.
Furthermore, at most~$k(k-1)$ actions can, potentially, be improving in~$\mathcal{M}$.
We obtain~$m_0\leq k\leq m_\sigma\leq k^2$, which yields~$f_\Pi(m_\sigma)\leq\sqrt{m_\sigma}\leq k$.

Let~$\sigma'$ denote the policy for~$\mathcal{M}'$ that \emph{corresponds} to~$\sigma$, that is, an action is active in~$\sigma'$ if and only if one of its copies is active in~$\sigma$.
Due to~$f_\Pi(m_\sigma)\leq k$, the rule~$\Pi$ selects one of the~$k$ least-preferred improving switches for~$\sigma$, so it chooses a copy of the least-preferred improving switch for~$\sigma'$.

Therefore, the run of~$\Pi$ on~$\mathcal{M}$ mimics the run of the rules analyzed in Lemma~\ref{lem:BDL_with_fequals1} on~$\mathcal{M}'$ if we choose corresponding initial policies.
Since~$\mathcal{M}'$ has~$k$ actions and all~$1+2k(k-1)$ actions in~$\mathcal{M}$ are deterministic, we obtain the desired subexponential lower bound.
\end{proof}

\begin{figure}
    \centering
    \resizebox{.5\linewidth}{!}{%
    \begin{tikzpicture}[node distance = 3cm,
    p0/.style={circle, draw=p0Blue!60, fill=p0Blue!40, thick, minimum size=11mm,inner sep=0pt, outer sep=0pt}, % player 0 nodes
    every path/.style={draw=p0Blue,line width=1.3pt,{>=Latex}}, % default: edges are blue
      p1/.style={rectangle, draw=p1Red!60, fill=p1Red!40, thick, minimum size=9mm,inner sep=0pt, outer sep=0pt}, % player 1 nodes
      phan/.style={draw=none, fill=none, inner sep=0pt, outer sep=0pt}, % phantom nodes
      ]

    % nodes
    \node[p0] (a1) at (0,6) {$\begin{array}{c}\alpha_\ell \end{array}$};
    \node[p0] (b1) at (0,0) {$\begin{array}{c}\beta_\ell \end{array}$};
    \node[p0] (a2) at (6,6) {$\begin{array}{c}\alpha_{\ell+1} \end{array}$};
    \node[p0] (b2) at (6,0) {$\begin{array}{c}\beta_{\ell+1} \end{array}$};
    
    % edges 
    \LIGadget[above]{a1}{a2}{}{$p_{\alpha_\ell}$}{}{0.5}{}{}
    \LIGadget[above right=-8pt and 5pt]{a1}{b2}{$L^\ell$}{$p_{\alpha_\ell}$}{}{0.4}{}{}
    \LIGadget[below right=-8pt and 5pt]{b1}{a2}{}{$p_{\beta_\ell}$}{}{0.4}{}{}
    \LIGadget[below]{b1}{b2}{$L^\ell$}{$p_{\beta_\ell}$}{$\delta_a$}{0.5}{$\delta$}{$\epsilon$}
\end{tikzpicture}}
    \caption{Level~$\ell\in\{1,2,\ldots,L\}$ of the Markov decision process~$\mathcal{M}'_L$ from the proof of Lemma~\ref{lem:BDL_with_fequalsm}.
    Red squares indicate randomized actions; they should not be confused with states.
    Labels of arcs starting in squares denote probabilities.
    We omit most state labels~$\delta(\cdot,\cdot)$,~$\epsilon(\cdot,\cdot)$ and action lables~$\delta_a(\cdot,\cdot)$, and write~$\delta=\delta(\beta_\ell,\beta_{\ell+1})$,~$\epsilon=\epsilon(\beta_\ell,\beta_{\ell+1})$, and~$\delta_a=\delta_a(\beta_\ell,\beta_{\ell+1})$.
    Note that~$\delta_a$ is the action's name, and not, as usual, a reward.}
    \label{fig:BDL_with_fequalsn}
\end{figure}
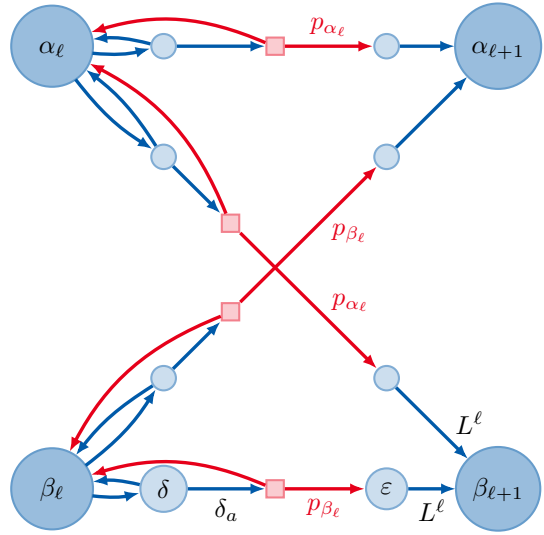

\begin{lemma}\label{lem:BDL_with_fequalsm}
    Let~$\Pi$ be a~$\{\textsc{Bland}, \textsc{Dantzig}, \textsc{LargestIncrease}\}$-ranking-based pivot rule with~$f_{\Pi}(m)=m$ for all~$m\in\N$.
    Then, there exists a family of weak unichain Markov decision processes with~$N$ non-zero transition probabilities such that \textsc{PolicyIteration} with pivot rule~$\Pi$ takes~$\Omega(2^{N})$ iterations.
\end{lemma}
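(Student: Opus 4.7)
The plan is to start with the binary counter $\mathcal{M}_L$ of \cref{lem:BDL_with_fequals1} and replace each of its deterministic actions with the scaling gadget of~\cite{disser_unified_2023} depicted in \cref{fig:BDL_with_fequalsn}, yielding a family $(\mathcal{M}'_L)_{L\in\N}$. Each gadget routes the original action $(x,y)$ through a player-$0$ entry state $\gamma$, a randomized relay $\alpha$ that continues to a player-$0$ exit state $\beta$ with probability $p$ and loops back to $x$ with probability $1-p$, and then from $\beta$ to $y$ with the original reward. Tuning the probabilities $p_{\alpha_\ell},p_{\beta_\ell}$ at level $\ell$ rescales both the reduced cost and the objective improvement of the \emph{outer} switch that replaces the original action, at the expense of introducing a bounded number of auxiliary improving switches inside the gadget (at the internal states $\gamma$ and $\beta$).

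The core design choice is to pick the probabilities so that, along the run of the algorithm, every outer improving switch at level $\ell$ has strictly larger reduced cost and strictly larger objective improvement than every outer improving switch at any level above $\ell$; assigning the global indices so that lower-level gadgets have the smallest Bland numbers then aligns \textsc{Bland}, \textsc{Dantzig}, and \textsc{LargestIncrease} into a single common preorder on the outer improving switches. Since $f_\Pi(m)=m$, the rule $\Pi$ selects the maximal element of this common preorder, i.e., the lowest-level outer switch -- precisely the behaviour driving the binary-counter argument in \cref{lem:BDL_with_fequals1}, but now realized from the opposite end of the ranking. Defining canonical policies for each $b\in\{0,1,\ldots,2^L-1\}$ (which fix the internal gadget edges in a unique resolved configuration and encode $b$ as a bit pattern across the levels), I would then argue by induction on $b$, analogous to the proof of \cref{lem:BDL_with_fequals1}, that \textsc{PolicyIteration} visits the canonical policy for every $b$ and hence performs $\Omega(2^L)$ iterations.

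The hardest part will be controlling the auxiliary improving switches at the internal states $\gamma$ and $\beta$: these arise whenever an outer switch is applied and were already the central technical nuisance in~\cite{disser_unified_2023}, which however only needed to control a \emph{single} pivot rule. Here we must ensure that \emph{all three} neighbor rankings agree at every visited policy, so the probabilities have to be chosen so that whenever an auxiliary switch is improving it is strictly dominated, in both reduced cost and objective improvement, by the outer switch at its own level (preserving the common ordering), and so that the short deterministic cascade of auxiliary switches triggered after each bit-advance can be absorbed into the definition of canonical policy without altering the induction. Once this dominance property is verified, the counting is routine: each gadget contributes $O(1)$ non-zero transition probabilities and there are $\Theta(L)$ gadgets, so $N=\Theta(L)$ and the iteration bound becomes $\Omega(2^N)$, as claimed.
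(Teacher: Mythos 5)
Your plan matches the paper's construction at a high level: replace each deterministic action of the binary counter from \cref{lem:BDL_with_fequals1} with the probabilistic gadget of \cref{fig:BDL_with_fequalsn}, tune the per-state probabilities (the paper uses $p_{\alpha_\ell}=M^{-2\ell+2}$, $p_{\beta_\ell}=M^{-2\ell+1}$ for large $M$) so that switches at lower levels have larger reduced cost and objective improvement, give lower levels the smallest Bland indices, and let $f_\Pi(m)=m$ select the maximal element of the resulting common preorder. The counting ($N=\Theta(L)$, $\Omega(2^L)$ iterations) is also correct. One presentational difference: the paper does not redo the canonical-policy induction; instead it shows in one step that reversed \textsc{Bland} on $\mathcal{M}'_L$ simulates the run of \cref{lem:BDL_with_fequals1} on $\mathcal{M}_L$, and separately shows that $\Pi$ mimics reversed \textsc{Bland} because the three rankings coincide at every visited policy. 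Your induction route would also work but is heavier.

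The genuine gap is in your description of the ``hardest part.'' You propose choosing probabilities so that ``whenever an auxiliary switch is improving it is strictly dominated, in both reduced cost and objective improvement, by the outer switch at its own level.'' This condition is ill-posed: at any fixed state $x$ at most one of $(x,\delta(x,y))$, $(\delta(x,y),x)$, $\delta_a(x,y)$ is improving at a time, so the auxiliary and outer switch at the same level are never simultaneously in the domain of the preorder and cannot be compared. Moreover, outer switches at level $\ell$ are \emph{not} uniformly above auxiliary switches at lower levels $\ell'<\ell$---those have reduced cost $\Theta(p_{x_{\ell'}})\gg\Theta(p_{x_\ell})$. What the paper actually establishes is a \emph{causal invariant}: starting from the chosen $\sigma_0'$, an action $(x,\delta(x,y))$ can only become improving as the immediate consequence of activating $\delta_a(x,y)$; all lower-level auxiliary switches have been resolved at that moment, $(x,\delta(x,y))$ inherits the reduced cost of $\delta_a(x,y)$ and at least its objective improvement, and hence is globally maximal in all three rankings. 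The common preorder must be maintained across \emph{all} improving switches (auxiliary and outer), for which the paper uses that, for the back-edges and $\delta_a$ actions, reduced cost equals objective improvement, and the geometrically decreasing $p_x$ separate the levels in \textsc{Dantzig} and \textsc{LargestIncrease} exactly as the Bland indexing does. Your plan would need to replace the local ``same-level dominance'' idea with this timing argument and the cross-level ordering over all switch types to go through.
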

\begin{proof}
    Let~$\mathcal{M}_L$ be the Markov decision process described in the first paragraph of the proof of Lemma~\ref{lem:BDL_with_fequals1}, see Figure~\ref{fig:BDL_with_fequals1} for a drawing.
    Further, let~$\mathcal{M}'_L$ denote the Markov decision process that we obtain from~$\mathcal{M}_L$ as follows\footnote{An analogous construction was used in~\cite{disser_unified_2023}.}: for every pair of states~$\{x,y\}$ such that~$x\in\{\alpha_\ell,\beta_\ell\}$ and~$y\in\{\alpha_{\ell+1},\beta_{\ell+1}\}$, for some~$\ell\in\{1,2,\ldots,L\}$,
    \begin{itemize}
        \item add two states, denoted by~$\delta(x,y)$ and~$\epsilon(x,y)$,
        \item add a deterministic action without reward from~$x$ to~$\delta(x,y)$,
        \item add a deterministic action without reward from~$\delta(x,y)$ to~$x$,
        \item add a probabilistic action, called~$\delta_a(x,y)$, without reward that starts in~$\delta(x,y)$, and leads to~$\epsilon(x,y)$ with a probability of~$p_x\in(0,1)$ and to~$x$ with a probability of~$(1-p_x)$, where~$p_x$ only depends on the state~$x$,
        \item add a deterministic action from~$\epsilon(x,y)$ to~$y$ that yields the same reward as the action~$(x,y)$,
        \item delete the action~$(x,y)$.
    \end{itemize}
    Consider Figure~\ref{fig:BDL_with_fequalsn} for a drawing of one level of~$\mathcal{M}'_L$.
    The probabilities~$p_x$ are specified later.
    An action is called \emph{incident} to state~$x$ if it starts in~$x$, or leads directly to~$x$ with positive probability.
    
    In the proof of Lemma~\ref{lem:BDL_with_fequals1} we argued that the~$\{\textsc{Bland}\}$-ranking-based rule~$\Pi_{\textsc{Bland}}$ with~$f_{\Pi_{\textsc{Bland}}}\equiv 1$ is exponential on~$\mathcal{M}_L$ if, for all indices~$\ell\in\{1,2,\ldots,L\}$ and~$k>\ell$, it prefers improving switches in~$\alpha_\ell$ over those in~$\beta_\ell$, and prefers switches in~$\alpha_\ell$ or~$\beta_\ell$ over those in~$\alpha_k$ or~$\beta_k$.    
     
    Analogously, we consider a reversed \textsc{Bland} ranking on the actions of~$\mathcal{M}'_L$ such that, for all indices~$\ell<k$, the~$\{\textsc{Bland}\}$-ranking-based rule~$\Pi'_{\textsc{Bland}}$ with~$f_{\Pi'_{\textsc{Bland}}}(m)\equiv m$ prefers switches incident to~$\alpha_\ell$ over those incident to~$\beta_\ell$, and switches incident to~$\alpha_\ell$ or~$\beta_\ell$ over those incident to~$\alpha_k$ or~$\beta_k$.\footnote{For notational convenience, the symbols~$\alpha_\ell$ and~$\beta_\ell$ are used both for states in~$\mathcal{M}_L$ and in~$\mathcal{M}'_L$. The intended meaning is always clear from context.}
    Note that such a ranking exists since every action that might become improving is incident to exactly one state in~$\bigcup_{\ell=1}^L\{\alpha_\ell,\, \beta_\ell\}$.
    The relative ranking of actions incident to the same state in~$\bigcup_{\ell=1}^L\{\alpha_\ell,\, \beta_\ell\}$ is irrelevant, as we will see that at most one of them is improving at any time.
    We fix these \textsc{Bland} rankings on~$\mathcal{M}_L$ and~$\mathcal{M}'_L$ for the rest of this proof.
 
    We also establish a connection between policies for~$\mathcal{M}_L$ and those for~$\mathcal{M}'_L$.
    A policy~$\sigma'$ for~$\mathcal{M}'_L$ is said to \emph{correspond} to a policy~$\sigma$ for~$\mathcal{M}_L$ if we have
    \begin{equation}\label{eq:correspond}
      \sigma(x) = y \quad\Longleftrightarrow\quad 
      \sigma'(x) = \delta(x,y), \ \text{and}\ \delta_a(x,y)\ \text{is active in}\ \sigma',      
    \end{equation}
    for every pair of states~$x,y\in\bigcup_{\ell=1}^{L+1}\{\alpha_\ell,\, \beta_\ell\}$.
    In other words, both policies move from~$x$ to~$y$ with probability one, collecting the same reward along the way.
    Consequently, we have
    \begin{equation}\label{eq:sameValues}
    \val_\sigma(x) = \val_{\sigma'}(x), \qquad \text{for all~} x\in\bigcup\nolimits_{\ell=1}^{L+1}\{\alpha_\ell,\, \beta_\ell\}.
    \end{equation}
    It is not hard to see that the optimum policy for~$\mathcal{M}'_L$ corresponds to the optimum policy for~$\mathcal{M}_L$.
    Thus, since every policy for~$\mathcal{M}'_L$ that corresponds to a policy for~$\mathcal{M}_L$ is weak unichain, the Markov decision process~$\mathcal{M}'_L$ is weak unichain.

    Fix a policy~$\sigma$ for~$\mathcal{M}_L$ and a corresponding policy~$\sigma'$ for~$\mathcal{M}'_L$.
    Let~$k\in\{1,2,\ldots,L\}$ and~$x\in\{\alpha_k,\, \beta_k\}$ be the level and state of~$\mathcal{M}_L$ in which \textsc{Bland} applies the improving switch for~$\sigma$, and denote the policy resulting from this switch by~$\Bar\sigma$.
    We will argue that \textsc{Bland} on~$\mathcal{M}'_L$ transforms~$\sigma'$ into a policy that corresponds to~$\Bar{\sigma}$. 
    Since~$\sigma$ and~$\sigma'$ were chosen arbitrarily, Lemma~\ref{lem:BDL_with_fequals1} then yields an exponential lower bound for \textsc{Bland} on~$\mathcal{M}'_L$ if we choose the initial policy corresponding to the initial policy from Lemma~\ref{lem:BDL_with_fequals1}; note that the number of actions in~$\mathcal{M}_L'$ is linear in the number of actions in~$\mathcal{M}_L$, where all probabilistic actions only introduce two non-zero transition probabilities.

    Let~$y,z\in\{ \alpha_{k+1}, \beta_{k+1} \}$ such that~$\sigma(x)=y\neq z$.
    Since the switch in~$x$ is improving, we have~$\rew(x,z) + \val_\sigma(z) > \rew(x,y) + \val_\sigma (y)$.
    Thus, by~\eqref{eq:sameValues}, we get
    \[
    \val_{\sigma'}(\epsilon(x,z)) = \rew(x,z) + \val_{\sigma'}(z) > \rew(x,y) + \val_{\sigma'}(y) = \val_{\sigma'}(\epsilon(x,y)).
    \]
    Hence, if~$\delta_a(x,z)$ is not already active, then it is improving for~$\sigma'$.
    Otherwise, the inactive action~$(x,\delta(x,z))$ is improving for~$\sigma'$.
    However, \textsc{Bland} does not necessarily apply these switches to~$\sigma'$ since there might be improving switches higher in the \textsc{Bland} ranking.

    Let~$S$ denote the set of states of~$\mathcal{M}_L$ whose actions \textsc{Bland} prefers over actions of~$x$, that is,~$S=\bigcup_{\ell=1}^{k-1}\{\alpha_\ell,\, \beta_\ell\}$ if~$x=\alpha_k$, and~$S=\bigcup_{\ell=1}^{k-1}\{\alpha_\ell,\, \beta_\ell\}\cup\{\alpha_k\}$ if~$x=\beta_k$.
    Then, there is no improving switch for~$\sigma$ in any state in~$S$.
    Let~$u\in S$ and~$v,w\in\bigcup_{\ell=1}^{k+1}\{\alpha_\ell,\, \beta_\ell\}$ such that~$\sigma(u)=v\neq w$, and~$v$ and~$w$ are in the same level.    
    By~\eqref{eq:correspond}, this gives us that~$\sigma'(u)=\delta(u,v)$, and~$\delta_a(u,v)$ is active in~$\sigma'$.
    
    Since the switch in~$u$ is not improving for~$\sigma$, the only action incident to~$u$ in~$\mathcal{M}'_L$ that might be improving for~$\sigma'$ is the action~$(\delta(u,w),u)$.
    In particular, this action is improving if and only if it is not active.
    Furthermore, its activation does not affect the other states since~$\sigma'$ does not reach~$\delta(u,w)$ from any other state.
    This yields that \textsc{Bland} applies all such switches~$(\delta(u,w),u)$ that are not active already.
    
    Afterwards, the improving switch incident to~$x$ is the first in the \textsc{Bland} ranking.
    More precisely, if the action~$\delta_a(x,z)$ is already active in~$\sigma'$, \textsc{Bland} applies the improving switch~$(x,\delta(x,z))$.
    Otherwise, it activates~$\delta_a(x,z)$, which makes~$(x,\delta(x,z))$ improving and does not affect any other states.
    Thus, \textsc{Bland} applies~$(x,\delta(x,z))$ afterwards.
    So, in both cases, we end up with a policy that corresponds to~$\Bar{\sigma}$.
    In conclusion, we obtain that \textsc{Bland} is exponential on~$\mathcal{M}_L'$.

    Let~$\sigma_0$ be the initial policy from Lemma~\ref{lem:BDL_with_fequals1}.
    Let~$\sigma'_0$ be the unique policy for~$\mathcal{M}'_L$ that corresponds to~$\sigma_0$ and, for all states~$x,y\in\bigcup_{\ell=1}^{L+1}\{\alpha_\ell,\, \beta_\ell\}$ such that~$\delta(x,y)$ exists, satisfies~$\sigma'_0(\delta(x,y))=x$ if~$\sigma_0'(x) \neq \delta(x,y)$.

    Let~$\Pi$ be a~$\{\textsc{Bland},\allowbreak \textsc{Dantzig},\allowbreak \textsc{LargestIncrease}\}$-\allowbreak ranking-based rule with~$f_{\Pi}(m)=m$ for all~$m\in\N$.
    We argue that the rankings for all three pivot rules agree on every policy that is visited by \textsc{Bland} on~$\mathcal{M}'_L$ with initial policy~$\sigma_0'$.
    Then~$\Pi$ applies the same sequence of improving switches as \textsc{Bland}, which we have already proven to be exponential.
    This yields the desired lower bound for~$\Pi$.

    It is time to specify the probabilities in~$\mathcal{M}'_L$.
    We set~$p_{\alpha_\ell}=M^{-2\ell+2}$ and~$p_{\beta_\ell}=M^{-2\ell+1}$ for every~$\ell\in\{1,2,\ldots,L\}$, where~$M\in\N$ is sufficiently large.
    Fix~$x,y\in \bigcup_{\ell=1}^{L+1}\{\alpha_\ell,\, \beta_\ell\}$ such that~$\delta(x,y)$ exists, and fix a weak unichain policy~$\sigma$ for~$\mathcal{M}'_L$.
    Then, the reduced costs of the actions~$\delta_a(x,y)$ and~$(\delta(x,y),x)$ are
    \begin{equation}\label{eq:rcDelta_a}
    \rc(\delta_a(x,y))=p_x(\val_{\sigma}(\epsilon(x,y)) - \val_\sigma(x))
    \end{equation}
    and
    \[
    \rc(\delta(x,y),x) = p_x( \val_\sigma(x) -\val_{\sigma}(\epsilon(x,y)) ),
    \]
    whenever they are inactive in~$\sigma$, respectively.
    Observe that the values of~$x$,~$y$, and~$\epsilon(x,y)$ do not depend on any probability.
    Let~$u,v \in \bigcup_{\ell=1}^{L}\{\alpha_\ell,\, \beta_\ell\}$ such that~$u$ precedes~$v$ in the \textsc{Bland} ranking on~$\mathcal{M}'_L$.
    Since rewards and values are finite, we can choose~$M$ so large that the reduced costs of improving actions in the two states of the form~$\delta(u,\cdot)$ are larger than those of improving actions in the states~$\delta(v,\cdot)$.

    Further, observe that if there is an improving switch in~$\delta(x,y)$ for~$\sigma$, then~$(x,\delta(x,y))$ is inactive in~$\sigma$.
    Therefore, the reduced costs of an improving action in~$\delta(x,y)$ agree with the objective improvement that its application yields, and there is only one improving switch incident to a state of the form~$\delta(x,\cdot)$.    
    We conclude that, given some weak unichain policy, the \textsc{Bland}, \textsc{Dantzig}, and \textsc{Largest\-Increase} rankings agree on the set of improving switches in states of the form~$\delta(\cdot,\cdot)$.

    We claim that, whenever an action of the form~$(x,\delta(x,y))$ is improving during the run of \textsc{Bland} with initial policy~$\sigma_0'$, then it is the most-preferred improving switch in all three rankings.
    This claim yields the statement since it implies that the three rankings agree for every policy visited during the run of \textsc{Bland}, which we have already shown to be exponential, and since~$\Pi$ selects the same switch as \textsc{Bland} whenever all rankings agree.
    So it only remains to prove our claim.

    First observe that there is no improving switch of the form~$(x,\delta(x,y))$ for~$\sigma'_0$.
    We now argue that, during the run of \textsc{Bland} with initial policy~$\sigma'_0$, an action of the form~$(x,\delta(x,y))$ can only become improving due to the activation of~$\delta_a(x,y)$.
    
    Let~$\sigma$ be the first policy visited during the run of \textsc{Bland} with initial policy~$\sigma'_0$ such that, for some pair of states~$x,y\in \bigcup_{\ell=1}^{L+1}\{\alpha_\ell,\, \beta_\ell\}$, the action~$(x,\delta(x,y))$ just became improving but the last switch was not the activation of~$\delta_a(x,y)$.
    Then, the last switch must have been applied in a state~$z\in \bigcup_{\ell=1}^{L}\{\alpha_\ell,\, \beta_\ell\}$ in a level above~$x$ since other switches do not affect state~$x$.
    By our assumption on~$\sigma$, the switch in~$z$ followed directly on the activation of an action of the form~$\delta_a(z,\cdot)$.
    This is a contradiction since the action~$(\delta(x,y),x)$ was improving at the time the action of the form~$\delta_a(z,\cdot)$ was applied -- as~$(x,\delta(x,y))$ was inactive but not improving --, and \textsc{Bland} prefers the former action over the latter.

    So we obtain that, during the run of \textsc{Bland}, the action~$(x,\delta(x,y))$ can only become improving due to the activation of~$\delta_a(x,y)$.
    Assume that~$\delta_a(x,y)$ gets applied to the current policy~$\sigma$ during the run of \textsc{Bland}.
    Then, since the \textsc{Bland}, \textsc{Dantzig}, and \textsc{Largest\-Increase} rankings agree on switches in the states~$\delta(\cdot,\cdot)$, the improving switch~$\delta_a(x,y)$ for~$\sigma$ is most-preferred by all three rules.
    
    The activation of~$\delta_a(x,y)$ only makes~$(x,\delta(x,y))$ improving and does not affect the other states. 
    The reduced costs of~$(x,\delta(x,y))$ are then
    \[
    \rc(x,\delta(x,y)) = p_x(\val_{\sigma}(\epsilon(x,y)) - \val_\sigma(x)),
    \]
    which equals the reduced costs of the previous switch~$\delta_a(x,y)$, see~\eqref{eq:rcDelta_a}.
    Therefore, the induced objective improvement is also at least as large as the previous improvement due to~$\delta_a(x,y)$. 
    This yields that, after the activation of~$\delta_a(x,y)$, the improving switch~$(x,\delta(x,y))$ is the most-preferred switch in all three rankings.
    This proves our claim.
\end{proof}

\begin{figure}
    \centering
    \resizebox{.5\linewidth}{!}{%
    \begin{tikzpicture}[node distance = 3cm,
    p0/.style={circle, draw=p0Blue!60, fill=p0Blue!40, thick, minimum size=12mm,inner sep=0pt, outer sep=0pt}, % player 0 nodes
    every path/.style={draw=p0Blue,line width=1.3pt,{>=Latex}}, % default: edges are blue
      p1/.style={rectangle, draw=p1Red!60, fill=p1Red!40, thick, minimum size=5mm,inner sep=0pt, outer sep=0pt}, % player 1 nodes
      phan/.style={draw=none, fill=none, inner sep=0pt, outer sep=0pt}, % phantom nodes
      ]

    % nodes
    \node[p0] (x) at (0,0) {$x$};
    \node[p0] (x') at (4,0) {$v(x,y)$};
    \node[p0] (y) at (7,0) {$y$};
    \node[p1] (z) at (1,2) {};   
    \node[p0] (g) at (4,3) {$u(x,y)$}; 

    % edges bottom
    \draw (x) edge[->] node [below] {$0$} (x');
    \draw (x') edge[->] node [below] {$\rew(x,y)$} (y);
    \draw[p1Red] (z) edge[p1Red,->] node [above left] {$p(x,y)$} (x);
    \draw[p1Red] (z) edge[p1Red,->] node [above right=3pt and -13pt] {$1-p(x,y)$} (x');
    \draw (g) edge[->] node [above right=2pt and -20pt] {$q(x,y)$} (z);
    \draw (g) edge[->] node [right] {$0$} (x');

\end{tikzpicture}}
    \caption{The gadget described in the proof of Lemma~\ref{lem:notInSqrt}. The action with reward~$q(x,y)$ is called~$u_a(x,y)$.}
    \label{fig:notOsqrt}
\end{figure}
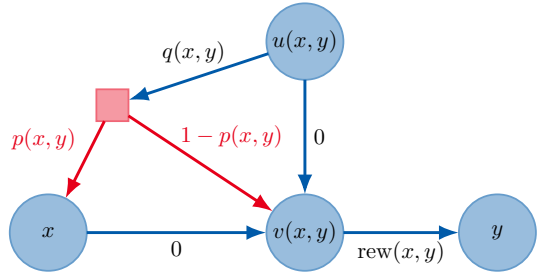

\begin{lemma}\label{lem:notInSqrt}
    Let~$\Pi$ be a~$\{\textsc{Bland}, \textsc{Dantzig}, \textsc{LargestIncrease}\}$-ranking-based pivot rule with~$f_{\Pi}(m)\neq o(\sqrt{m})$.
    Then, there exists a family of weak unichain Markov decision processes with~$N$ non-zero transition probabilities such that \textsc{PolicyIteration} with pivot rule~$\Pi$ takes~$\Omega(2^{\sqrt N})$ iterations.
\end{lemma}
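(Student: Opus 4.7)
The plan is to generalize the construction of \cref{lem:BDL_with_fequalsm} so that the pivot rule $\Pi$, which picks the $f_\Pi(m)$-th least-preferred improving switch, is forced to mimic the rule $\Pi'$ from \cref{lem:BDL_with_fequalsm} (which always selects the most-preferred improving switch, corresponding to $f_{\Pi'}(m)=m$). First I would exploit the assumption $f_\Pi(m)\neq o(\sqrt m)$ to fix a constant $c>0$ and a strictly increasing sequence $(k_i)_{i\in\N}$ with $f_\Pi(k_i)\geq c\sqrt{k_i}$ for every $i$. For each $i$ I would choose $L_i=\Theta(\sqrt{k_i})$ small enough that every weak unichain policy $\sigma$ of the process $\mathcal{M}'_{L_i}$ from \cref{lem:BDL_with_fequalsm} admits a number $m_\sigma\leq f_\Pi(k_i)$ of improving switches, and then build the process $\mathcal{N}_i$ by attaching multiple copies of the gadget in \cref{fig:notOsqrt} to $\mathcal{M}'_{L_i}$.

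Each gadget attached to a pair $(x,y)$ of states introduces a decoy source node $u(x,y)$, an intermediate state $v(x,y)$, and a randomized node $z$. The node $u(x,y)$ admits the deterministic action to $v(x,y)$ without reward and the action $u_a(x,y)$ to $z$ with reward $q(x,y)$, where $z$ routes to $x$ with probability $p(x,y)$ and to $v(x,y)$ with probability $1-p(x,y)$. By choosing $p(x,y)$ sufficiently small and selecting $q(x,y)$ from one of two well-separated scales, I would ensure that exactly one of the two actions at $u(x,y)$ is improving at every visited policy; its reduced cost is $q(x,y)$ up to negligible corrections, and since $u(x,y)$ is a source, this reduced cost coincides with the objective improvement of the switch. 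This allows decoy switches to be placed either strictly above every real switch or strictly below the target real switch in all three neighbor rankings simultaneously, reusing the probability-scaling idea from \cref{lem:BDL_with_fequalsm} to guarantee agreement of \textsc{Bland}, \textsc{Dantzig}, and \textsc{LargestIncrease}.

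The core combinatorial step is to attach gadgets so that at every visited policy $\sigma$ the total number of improving switches equals exactly $k_i$ and the most-preferred real switch sits at position $f_\Pi(k_i)$ from the bottom of the common ranking. I would use $k_i-f_\Pi(k_i)$ high-ranked decoys placed above every real switch, and $f_\Pi(k_i)-m_\sigma$ low-ranked decoys interleaved between the less-preferred real switches and the target switch. Since $m_\sigma$ varies during the simulation, keeping the total equal to $k_i$ requires pairing gadgets whose two actions become improving under complementary conditions on $\val_\sigma(x)$ and $\val_\sigma(v(x,y))$, so that each newly created or destroyed real improving switch is compensated by the opposite behavior of a paired decoy. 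With this layout, $\Pi$ on $\mathcal{N}_i$ selects the target real switch at every iteration, so the run of $\Pi$ on $\mathcal{N}_i$ is in one-to-one correspondence with the run of $\Pi'$ on $\mathcal{M}'_{L_i}$; the latter takes $\Omega(2^{L_i})=\Omega(2^{\sqrt{k_i}/C})$ iterations by \cref{lem:BDL_with_fequalsm}, and since $\mathcal{N}_i$ has $N_i=O(k_i)$ non-zero transition probabilities we obtain the desired $\Omega(2^{\sqrt{N_i}})$ bound.

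The main obstacle will be juggling these two requirements simultaneously throughout the run: the total number of improving switches must remain exactly $k_i$ while the rank of every decoy and real switch, in each of \textsc{Bland}, \textsc{Dantzig}, and \textsc{LargestIncrease}, must place the target at position $f_\Pi(k_i)$. This forces a delicate coordination between the parameters of each gadget and the policy dynamics induced by $\mathcal{M}'_{L_i}$, substantially more intricate than the single-scale probability choices needed in \cref{lem:BDL_with_fequalsm}, because the target switch lies neither at the top nor at the bottom of the ordering.
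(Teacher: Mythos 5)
Your proposal follows essentially the same route as the paper: fix $c>0$ and a sequence $(m_i)$ with $f_\Pi(m_i)\geq c\sqrt{m_i}$, take the largest counter $\mathcal{M}'_{L}$ from Lemma~\ref{lem:BDL_with_fequalsm} whose number of actions is at most $f_\Pi(m_i)+1$, wrap each action in the decoy gadget of Figure~\ref{fig:notOsqrt}, and add $m_i-f_\Pi(m_i)$ dummy states with always-improving high-reward switches that sit above all real switches. The one place you misjudge the difficulty is your concluding ``main obstacle'': once you commit to wrapping \emph{every} action (after padding with trivial dummy actions so the total is exactly $f_\Pi(m_i)+1$), there is no ``delicate coordination'' left. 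The gadget parameters $p(x,y)\in(0,\tfrac{L}{L+U})$ and $q(x,y)\in(0,p(x,y)L)$ guarantee, purely locally, that exactly one of $u_a(x,y)$ and $(x,v(x,y))$ is improving at every induced policy; hence the number of improving switches contributed by the gadget pairs is identically $f_\Pi(m_i)$, independent of $m_\sigma$, and no per-iteration balancing is needed. The three rankings are then kept consistent by a simple three-scale separation: the $\gamma_k$-decoys have reduced cost $U+k>U$, real switches have reduced cost in $[L,U]$, and $u_a$-decoys have reduced cost below $L$ (and decreasing in geometric steps $M^{-k}$ to break ties), so \textsc{Bland}, \textsc{Dantzig}, and \textsc{LargestIncrease} agree without further tuning. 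In other words, the target switch sitting ``in the middle'' of the ordering does not require interleaving — it simply sits at the top of the middle block, and the middle and bottom blocks together always have size exactly $f_\Pi(m_i)$ by the gadget invariant.
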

\begin{proof}    
    Since~$f_\Pi(m)\neq o(\sqrt{m})$, there exists a constant~$c>0$ and some strictly monotone sequence~$(m_i)_{i\in\N}\subseteq\N$ such that~$f_{\Pi}(m_i)\geq c\sqrt{m_i}$, for all~$i\in\N$.
    Let~$\left( \mathcal{M}'_L \right)_{L\in\N_{\geq 2}}$ be the family of Markov decision processes described in the first paragraph of the proof of Lemma~\ref{lem:BDL_with_fequalsm}.
    Fix~$i\in\N$ such that some processes in~$\left( \mathcal{M}'_L \right)_{L\in\N_{\geq 2}}$ have at most~$f_{\Pi}(m_i)+1$ actions.
    Let~$\mathcal{M}^{(3)}_i$ denote the largest of these processes, that is,
    \[    \mathcal{M}^{(3)}_i=\mathcal{M}'_{L_\text{max}}, \text{ where } L_\text{max}=\arg\max\{ L\in\N_{\geq2}\, \colon\, \mathcal{M}'_L \text{ has at most } f_{\Pi}(m_i)+1 \text{ actions}\}.
    \]
    Let~$\mathcal{M}^{(1)}_i$ denote the Markov decision process that we obtain from~$\mathcal{M}^{(3)}_i$ as follows: 
    \begin{itemize}
        \item First, add new states whose single action leads to the sink without reward, until there are exactly~$f_{\Pi}(m_i)+1$ actions.
        Denote the resulting Markov decision process by~$\mathcal{M}^{(2)}_i$.
        \item Then, replace every action~$(x,y)$ of~$\mathcal{M}^{(2)}_i$, except the unique action in the sink, by the gadget drawn in Figure~\ref{fig:notOsqrt}.
        That is, we
        \begin{itemize}
            \item add two states, called~$u(x,y)$ and~$v(x,y)$,
            \item add a deterministic action without reward from~$x$ to~$v(x,y)$,
            \item add a deterministic action without reward from~$u(x,y)$ to~$v(x,y)$,
            \item add a deterministic action with the reward~$\rew(x,y)$ of action~$(x,y)$ from~$v(x,y)$ to~$y$,
            \item add a probabilistic action, which we denote with~$u_a(x,y)$, with reward~$q(x,y)$ that starts in~$u(x,y)$, and leads to~$x$ with probability~$p(x,y)\in(0,1)$ and to~$v(x,y)$ with probability~$1-p(x,y)$, where the values of~$p(x,y)$ and~$q(x,y)$ are specified below,
            \item delete the action~$(x,y)$.
        \end{itemize}
    \end{itemize}
    Observe that, given policies~$\sigma^{(3)}$ for~$\mathcal{M}^{(3)}_i$ and~$\sigma^{(1)}$  for~$\mathcal{M}^{(1)}_i$ with
    \[
    \sigma^{(3)}(x)=y \quad\Longleftrightarrow\quad \sigma^{(1)}(x)=v(x,y),
    \]
    for all actions~$(x,y)$ of~$\mathcal{M}^{(2)}_i$, we have~$\rc_{\sigma^{(3)}}(x,y) = \rc_{\sigma^{(1)}}(x,v(x,y))$, and the objective improvements induced by these switches is also the same.

    Fix some action~$(x,y)$ of~$\mathcal{M}^{(2)}_i$, and some policy~$\sigma^{(1)}$ for~$\mathcal{M}^{(1)}_i$ with~$\sigma^{(1)}(u(x,y))=v(x,y)$.
    We first argue that we can choose~$p(x,y)$ and~$q(x,y)$ such that
    \begin{itemize}
        \item the reduced costs of~$u_a(x,y)$ are smaller than the reduced costs of any improving switch that might occur in~$\mathcal{M}^{(3)}_i$,
        \item~$u_a(x,y)$ is improving~$\Longleftrightarrow$~$(x,v(x,y))$ is not improving.
    \end{itemize}
    Let~$L>0$ denote a lower bound on the reduced costs of all improving switches that could occur in~$\mathcal{M}^{(3)}_i$.
    Similarly, let~$U$ denote an upper bound on the absolute value of the objective change that is induced by any switch, improving or not, that could occur in~$\mathcal{M}^{(3)}_i$.

    Assume that~$p(x,y)\in(0,\frac{L}{L+U})$ and~$q(x,y)\in(0,p(x,y)\cdot L)$.
    Then, the reduced costs of the action~$u_a(x,y)$ satisfy
    \begin{align}
    \rc_{\sigma^{(1)}}(u_a(x,y)) &= q(x,y) + p(x,y)\cdot \left( \val_{\sigma^{(1)}}(x) - \val_{\sigma^{(1)}}(v(x,y)) \right)\label{eq:rc_of_ua} \\
    & = q(x,y) - p(x,y)\cdot \rc_{\sigma^{(1)}}(x,v(x,y)) \notag\\
    & \leq q(x,y) + p(x,y)\cdot U < p(x,y)\cdot\left( L+U \right) < L,\notag
    \end{align}
    where we used that~$U$ is an upper bound on the absolute value of~$\rc_{\sigma^{(1)}}(x,v(x,y))$.
    Further, we have~$\rc_{\sigma^{(1)}}(x,v(x,y))\geq L$ if~$\rc_{\sigma^{(1)}}(x,v(x,y))>0$.
    This yields~$\rc_{\sigma^{(1)}}(x,v(x,y))>0$ if and only if~$\rc_{\sigma^{(1)}}(u_a(x,y))\leq 0$.   
    We conclude that, given a policy~$\sigma^{(1)}$ for~$\mathcal{M}^{(1)}_i$ such that~$\sigma^{(1)}(u(x,y))=v(x,y)$ for all actions~$(x,y)$ in~$\mathcal{M}^{(2)}_i$, the number of improving switches for~$\sigma^{(1)}$ is~$f_{\Pi}(m_i)$.
    
    Let~$\mathcal{M}_i$ denote the Markov decision process that we obtain from~$\mathcal{M}^{(1)}_i$ as follows:
    \begin{itemize}
        \item add a new state~$\gamma_0$ whose single action leads to the sink~$\top$ without reward,
        \item for every~$k\in\{1,2,\ldots,m_i-f_{\Pi}(m_i)\}$, add a new state~$\gamma_k$ that has two deterministic actions, one leading to~$\top$ without reward, the other leading to~$\gamma_0$ with reward~$U+k$.
    \end{itemize}
    We say that a policy~$\sigma$ for~$\mathcal{M}_i$ is \emph{induced} by a policy~$\sigma^{(3)}$ for~$\mathcal{M}^{(3)}_i$ if, for all actions~$(x,y)$ in~$\mathcal{M}^{(2)}_i$, we have
    \begin{itemize}
        \item~$\sigma(u(x,y))=v(x,y)$,
        \item~$\sigma(x)=v(x,y) \quad\Longleftrightarrow\quad \sigma^{(3)}(x)=y$,
        \item~$\sigma(\gamma_k)=\top$ for all~$k\in\{1,2,\ldots,m_i-f_{\Pi}(m_i)\}$.
    \end{itemize}
    Observe that every policy for~$\mathcal{M}^{(3)}_i$ induces a unique policy for~$\mathcal{M}_i$.

    We now define the \textsc{Bland} ranking on the actions of~$\mathcal{M}_i$.
    We divide the set of actions that can become improving in~$\mathcal{M}_i$ into three disjoint sets
    \begin{align*}
    S_1 &\coloneqq\left\{(\gamma_k,\gamma_0)\colon k\in\{1,2,\ldots,m_i-f_{\Pi}(m_i)\} \right\},\\
    S_2 &\coloneqq\left\{(x,v(x,y))\colon (x,y) \text{ is an action in } \mathcal{M}^{(3)}_i\right\},\\
    S_3 &\coloneqq\left\{u_a(x,y)\colon (x,y) \text{ is an action in } \mathcal{M}^{(2)}_i\right\}.       
    \end{align*}
    Then, \textsc{Bland} prefers switches in~$S_1$ over those in~$S_2$ or~$S_3$, and switches in~$S_2$ over those in~$S_3$.
    Further, it prefers~$(\gamma_k,\gamma_0)\in S_1$ over~$(\gamma_\ell,\gamma_0)\in S_1$ if~$k>\ell$.
    On~$S_2$, we use the \textsc{Bland} ranking for~$\mathcal{M}^{(3)}_i$ from the proof of Lemma~\ref{lem:BDL_with_fequalsm}, where we identify~$(x,v(x,y))\in\mathcal{M}_i$ with~$(x,y)\in\mathcal{M}^{(3)}_i$.
    Finally, on~$S_3$ we fix some arbitrary ranking.

    Let~$\sigma^{(3)}$ denote a non-optimal policy for~$\mathcal{M}^{(3)}_i$ visited during the run of \textsc{PolicyIteration} on~$\mathcal{M}^{(3)}_i$ with a pivot rule~$\Pi'$ as in Lemma~\ref{lem:BDL_with_fequalsm}.
    Let~$\sigma$ be the policy for~$\mathcal{M}_i$ induced by~$\sigma^{(3)}$.
    Let~$(\Bar x,\Bar y)$ denote the improving switch that~$\Pi'$ applies to~$\sigma^{(3)}$, and denote the resulting policy with~$\Bar\sigma^{(3)}$.
    We will argue that~$\Pi$ applies the switch~$(\Bar x,v(\Bar x,\Bar y))$ to~$\sigma$, which results in the policy induced by~$\Bar\sigma^{(3)}$.

    But first, we specify the choice of~$p(x,y)\in(0,\frac{L}{L+U})$ and~$q(x,y)\in(0,p(x,y)\cdot L)$, for each action~$(x,y)$ in~$\mathcal{M}^{(2)}_i$.
    Let~$u_a(x_i,y_i)$ denote the~$i$-th action in the \textsc{Bland} ranking of the actions in~$S_3$.
    That is,~$u_a(x_1,y_1)$ is the most-preferred action in~$S_3$.
    We arbitrarily fix~$p(x_1,y_1)\in(0,\frac{L}{L+U})$ and~$q(x_1,y_1)\in(0,p(x_1,y_1)\cdot L)$.
    Then, for every~$k\in\{2,3,\ldots,\vert S_3\vert\}$, we set~$p(x_k,y_k)=M^{-k+1}\cdot p(x_1,y_1)$ and~$q(x_k,y_k)=M^{-k+1}\cdot q(x_1,y_1)$, for some~$M\in\N$.
    
    This yields~$p(x,y)\in(0,\frac{L}{L+U})$ and~$q(x,y)\in(0,p(x,y)\cdot L)$ for all~$(x,y)\in\mathcal{M}^{(2)}_i$.
    Further, by~\eqref{eq:rc_of_ua}, the reduced costs~$\rc(u_a(x_k,y_k))$ are scaled by~$M^{-k+1}$ for every~$k\in\{1,2,\ldots,\vert S_3\vert\}$.
    Thus, if~$M$ is sufficiently large, the \textsc{Bland} ranking and the \textsc{Dantzig} ranking agree on~$S_3$, with respect to any policy.
    They also agree with the \textsc{Largest\-Increase} ranking since the reduced costs of the edge~$u_a(x,y)$ equals the induced objective improvement.

    It is obvious that the three rankings also agree on the set~$S_1$, irrespective of the policy, since the reduced cost and objective improvement of the improving switch~$(\gamma_k,\gamma_0)$ is~$U+k$ for all~$k\in\{1,2,\ldots,m_i-f_{\Pi}(m_i)\}$.

    Further, we have~$\rc_{\sigma^{(3)}}(x,y) = \rc_{\sigma}(x,v(x,y))$ for every action~$(x,y)$ of~$\mathcal{M}^{(2)}_i$.
    Thus, we obtain from Lemma~\ref{lem:BDL_with_fequalsm} that the rankings by \textsc{Bland}, \textsc{Dantzig}, and \textsc{Largest\-Increase} also agree on~$S_2$ for~$\sigma$.

    We conclude that the three rankings agree on the set of improving switches for~$\sigma$.
    Thus,~$\Pi$ selects the~$f_{\Pi}(m_i)$-th least-preferred improving switch in the common ranking.
    Since there are~$m_i$ improving switches for~$\sigma$, of which~$m_i-f_{\Pi}(m_i)$ are contained in~$S_1$, the selected switch is the most-preferred improving switch in~$S_2$, which is the switch~$(\Bar x,v(\Bar x,\Bar y))$.

    Therefore, the run of~$\Pi$ on~$\mathcal{M}_i$ mimics the run of~$\Pi'$ on~$\mathcal{M}^{(3)}_i$, up to the point where~$\Pi'$ has found the optimum.
    Due to Lemma~\ref{lem:BDL_with_fequalsm}, this takes a number of iterations that is exponential in the number of actions in~$\mathcal{M}^{(3)}_i$. 
    Since~$\mathcal{M}_i$ has~$\Theta(m_i)$ actions, all of wich are deterministic, and~$\mathcal{M}^{(3)}_i$ has~$\Theta(f_{\Pi}(m_i))=\Omega(\sqrt{m_i})$ actions, this yields the desired subexponential lower bound.
\end{proof}
\end{document}